\numberwithin{equation}{section}
\theoremstyle{plain}
\newtheorem{thm}{Theorem}[section]
\newtheorem{cor}[thm]{Corollary}
\newtheorem{lem}[thm]{Lemma}
\theoremstyle{definition}
\newtheorem{conj}[thm]{Conjecture}
\newtheorem{rem}[thm]{Remark}
\newtheorem{defi}[thm]{Definition}
\newcommand{\R}{\mathbb{R}}  
\newcommand{\C}{\mathbb{C}}  
\newcommand{\N}{\mathbb{N}}  
\newcommand{\im}{\text{\normalfont Im}} 
\newcommand{\re}{\text{\normalfont Re}} 
\newcommand{\sign}{{\rm sign}} 
\newcommand{\I}{{\rm i}} 
\renewcommand{\epsilon}{\varepsilon} 
\renewcommand{\ell}{l}
\newcommand{\PM}{\mathcal{P}}
\newcommand{\HCM}{{\rm HCM}}
\newcommand{\ME}{{\rm ME}}
\newcommand{\GGC}{{\rm GGC}}
\newcommand{\FID}{{\rm FID}}
\newcommand{\UI}{{\rm UI}}
\newcommand{\UIS}{{\rm UI}_s}
\newcommand{\ID}{{\rm ID}}
\newcommand{\FR}{{\rm FR}}
\newcommand{\B}{\mathfrak{B}}
\newcommand{\Beta}{{\bm\beta}} 
\newcommand{\WS}{\mathcal{S}} 
\newcommand{\MP}{{\normalfont \textbf{fp}}} 
\newcommand{\g}{{\bm\gamma}} 
\newcommand{\bs}{{\rm\bf b }} 
\newcommand{\be}{\begin{equation}}
\newcommand{\ee}{\end{equation}}
\newcommand{\wf}{w}  
\begin{document}
\title{Free infinite divisibility for powers of random variables}

\author{Takahiro Hasebe} 

\date{\today}

\maketitle

\pagestyle{myheadings}\markboth{}{Free infinite divisibility for powers of random variables}

\begin{abstract}
We prove that $X^r$ follows an FID distribution if: (1) $X$ follows a free Poisson distribution without an atom at 0 and $r\in(-\infty,0]\cup[1,\infty)$; (2) $X$ follows a free Poisson distribution with an atom at 0 and $r\geq1$; (3) $X$ follows a mixture of some HCM distributions and $|r|\geq1$; (4) $X$ follows some beta distributions and $r$ is taken from some interval. In particular, if $S$ is a standard semicircular element  then $|S|^r$ is freely infinitely divisible for $r\in(-\infty,0]\cup[2,\infty)$.  Also we consider the symmetrization of the above probability measures, and in particular show that $|S|^r \,\sign(S)$ is freely infinitely divisible for $r\geq2$. Therefore $S^n$ is freely infinitely divisible for every $n\in\N$. 
The results on free Poisson and semicircular random variables have a good correspondence with classical ID properties of powers of gamma and normal random variables.  
\end{abstract}

Mathematics Subject Classification 2010: 46L54, 60E07

Keywords: Free infinite divisibility, free regularity, free Poisson distribution, semicircle distribution, hyperbolic complete monotonicity


\section{Introduction}
In classical probability, people have tried to understand if infinite divisibility (ID) can be preserved by powers, products or quotients of (independent) random variables (rvs). Usually the L\'evy--Khintchine representation is not useful for this purpose and alternative new ideas are required. One class that behaves well with respect to powers and products is mixtures of exponential distributions ($\ME$), i.e.\ rvs of the form $E X$ where $E,X$ are independent, $E$ follows an exponential distribution and $X \geq0$. The Goldie--Steutel theorem says that the class $\ME$ is a subset of the class $\ID$ (see \cite{Gol67} and \cite{Ste67}). In this class we have the implication
\be
 \text{$X\sim \ME$  $\Rightarrow$ $X^r\sim\ME$ for any $r\geq1$},  
\ee
and also $\ME$ is closed under the product of independent rvs. 

Quite a successful class in the theory of ID distributions is HCM (hyperbolically completely monotone) distributions \cite{Bon92}. It is known that $\HCM \subset \ID$ and 
\be
 \text{$X\sim \HCM$  $\Rightarrow$ $X^r\sim\HCM$ for any $|r|\geq1$}, 
\ee
see \cite[p.\ 69]{Bon92}. The class HCM moreover satisfies that if $X,Y\sim\HCM$ and are independent then $X Y, X/Y \sim \HCM$ \cite[Theorem 5.1.1]{Bon92}. Prior to the appearance of HCM, Thorin \cite{Tho77a,Tho77b} introduced a class $\GGC$ (generalized gamma convolutions) which contains HCM \cite[Theorem 5.1.2]{Bon92}. Bondesson recently proved that if $X,Y \sim \GGC$ and $X,Y$ are independent then $X Y\sim\GGC$ \cite{Bon13}. He also conjectured that $X\sim \GGC$ implies that $X^r\sim\GGC$ for any $r\geq1$, which is still open. The class $\GGC$ is closed  with respect to the addition of independent rvs, while  $\ME$ and $\HCM$ are not. It is worth mentioning that Shanbhag et al.\ \cite{SPS77} proved a related negative result that the product of two independent positive ID rvs is not always ID.

In free probability, the class of the free regular ($\FR$) distributions, i.e.\ the laws of nonnegative free L\'evy processes, is closed with respect to the product $\sqrt{X}Y \sqrt{Y}$ where $X,Y$ are free \cite[Theorem 1]{AHS13}. However, little is known on powers of rvs except that Arizmendi et al.\ showed that if $X\sim \FID$ is even (i.e.\ having a symmetric distribution) then $X^2\sim\FR$ \cite{AHS13}.  The main purpose of this paper is to consider the free infinite divisibility (FID) or more strongly free regularity of powers of rvs. 

In this paper we will focus on several examples including free Poisson and semicircular rvs and prove that: (1) If $X$ follows a free Poisson distribution without an atom at 0, then $X^r\sim \FID$ for any $r\in(-\infty,0]\cup[1,\infty)$; (2) If $X$ follows a free Poisson distribution with an atom at 0, then $X^r \sim\FID$ for any $r\geq1$; (3) If $X$ follows some mixtures of (classical) HCM distributions then $X^r\sim\FR$ for any $|r|\geq1$; (4) If $X$ follows some beta distributions then $X^r\sim\FID$ for $r$ in some interval. Our result has a consequence that $|S|^r \sim\FID$ when $S$ is the standard semicircular element and $r\in(-\infty,0]\cup[2,\infty)$. We will also consider the symmetrization of powers of beta rvs and mixtures of HCM rvs, and in particular,  show that $|S|^r \,\sign(S)\sim\FID$ for $r\geq2$. These results imply that $S^n$ are FID for all $n\in\N$. Our result on HCM distributions generalizes \cite[Proposition 4.21(1)]{AH} where mixtures of positive Boolean stable laws are shown to be FID and part of \cite[Theorem 1.2(3)]{Has14} where beta distributions of the 2nd kind are shown to be FID. 

The proofs depend on the complex-analytic method which has been developed recently. Until around 2010, one could prove the FID property of a given probability measure only when the $R$-transform \cite{Voi86} or Voiculescu transform \cite{BV93} is explicit. There had been no way to prove the FID property if the $R$-transform is not explicit. Actually many probability distributions used in classical probability theory do not have explicit $R$-transforms, e.g.\ normal distributions, gamma distributions and beta distributions. By contrast, there are lots of methods in classical probability to show that a probability measure is ID, even if its characteristic function is not explicit. 

In 2011, Belinschi et al.\ changed this situation and they gave the first nontrivial FID probability measure: the normal distribution is FID \cite{BBLS11}. The proof is based on the complex analysis of the Cauchy transform (but there is combinatorial background). Since then, several other people developed the complex-analytic method. Now many nontrivial distributions are known to be FID: Anshelevich et al.\ showed that the $q$-normal distribution is FID for $q\in[0,1]$ \cite{ABBL10} and the author proved that beta distributions of the 1st kind and 2nd kind, gamma, inverse gamma and Student distributions are FID for many parameters \cite{Has14}. Other results can be found in \cite{AB13,AH13a,AH14,AH,AHS13,BH13}. These examples suggest that the intersection of ID and FID is rich. Almost all of these FID distributions belong to a further subclass UI that was introduced in \cite{AH13a}. With this class UI we are able to show the FID property of a given probability measure without knowing the explicit $R$-transform or free L\'evy--Khintchine representation. This class plays an important role in the present paper too.

 This paper contains two sections besides this section. In Section \ref{sec2} we will introduce basic notations and concepts including the classes ID, ME, GGC, HCM, FID, FR, UI and  probability measures to be treated. In Section \ref{sec3} we will state the main results rigorously and then prove them. Section \ref{sec4} explains similarity between our results on free Poisson and semicircle distributions and the classical results on gamma and normal distributions. Some conjectures are proposed based on this similarity.

\section{Preliminaries}\label{sec2}
Some general notations in this paper are summarized below. 
\begin{enumerate}[\rm(1)] 
\item $\PM_+$ is the set of (Borel) probability measures on $[0,\infty)$. 

\item For a classical or noncommutative rv $X$ and a probability measure $\mu$ on $\R$, the notation $X\sim \mu$ means that the rv $X$ follows the law $\mu$.  A similar notation is used for a subclass of probability measures: for example $X\sim \PM_+$ means that $X \sim\mu$ for some $\mu\in \PM_+.$

\item The function $z^p$ is the principal value defined on $\C \setminus(-\infty,0]$. 

\item The function $\arg(z)$ is the argument of $z$ defined in $\C\setminus(-\infty,0]$, taking values in $(-\pi,\pi)$. We also use another argument $\arg_{I}(z)$ taking values in an interval $I$. 
\end{enumerate}

\subsection{ID distributions and subclasses} 
Infinitely divisible distributions and subclasses are summarized here. The reader is referred to \cite{SVH03,Bon92} for more information on this section.  

A probability measure on $\R$ is said to be {\it infinitely divisible} (ID) if it has an $n^{\rm th}$ convolution root for any $n\in\N$. The class of ID distributions is denoted by $\ID$ (and this kind of notations will be adapted to other classes of probability measures too).  

Let $\g(p,\theta)$ be the gamma distribution 
\be
\frac{1}{\theta^{p}\Gamma( p)} x^{p-1} e^{-x/\theta}\,1_{(0,\infty)}(x)\,dx,\qquad p,\theta>0,  
\ee
where $\theta$ corresponds to the scaling. A probability measure $\mu\in\PM_+$ is called a {\it mixture of exponential distributions} ($\ME$) if there exists $\nu \in \PM_+$ such that $\mu= \g(1,1) \circledast \nu$, where $\circledast$ is classical multiplicative convolution. An equivalent definition is that $\mu$ is of the form $w \delta_0 + f(x) dx$, where $w \in[0,1]$ and $f$ is a $C^\infty$ function on $(0,\infty)$ which is completely monotone, i.e.\ $(-1)^{n}f^{(n)}\geq0$ for any $n\in\N\cup\{0\}$. It is known that $\ME \subset \ID$, called the Goldie--Steutel theorem. 

A probability measure on $[0,\infty)$ is called a {\it generalized gamma convolution} ($\GGC$) if it is in the weak closure of the set 
\be
\{\g(p_1,\theta_1)\ast \cdots \ast \g(p_n,\theta_n)\mid p_k,\theta_k>0, k=1,\dots,n, n\in\N\}, 
\ee
that is, the class of $\GGC$s is the smallest subclass of $\ID$ that contains all the gamma distributions and that is closed under convolution and weak limits.  

A pdf (probability density function) $f: (0,\infty) \to [0,\infty)$ is said to be {\it hyperbolically completely monotone} (HCM) if for each $u>0$ the map $w \mapsto f(u v) f(u/v)$ is completely monotone as a function of $w=v+1/v$. A probability distribution on $(0,\infty)$ is called an HCM distribution if it has an HCM pdf. 
It turns out that any HCM pdf is the pointwise limit of pdfs of the form
\be\label{pdf HCM}
C\cdot x^{p-1} \prod_{k=1}^n (t_k +x)^{-\gamma_k}, \qquad C, p, t_k, \gamma_k>0,  p<\sum_{k=1}^n\gamma_k, n\in\N. 
\ee 
 This limiting procedure gives us the representation of an HCM pdf 
\be
f(x) = C\cdot x^{\alpha-1} \exp\left(-b_1 x-\frac{b_2}{x}+ \int_{(1,\infty)} \log\left(\frac{t+1}{t+x}\right)\, \Gamma_1(dt) + \int_{[1,\infty)} \log\left(\frac{t+1}{t+1/x}\right)\, \Gamma_2(dt)\right)
\ee
where $\alpha\in\R, b_1,b_2 \geq0$ and $\Gamma_1, \Gamma_2$ are measures on $(1,\infty)$ and $[1,\infty)$, respectively, such that $\int (1+t)^{-1}\Gamma_k(dt)<\infty$. However these conditions on the parameters are not sufficient to ensure the integrability of $f$. 
The author does not know how to write down necessary and sufficient conditions for $\int_{(0,\infty)} f(x)\,dx <\infty$ in terms of the parameters $\alpha,b_1,b_2, \Gamma_1,\Gamma_2$. An important fact in the theory of ID distributions is that $\HCM \subset \GGC$.

\subsection{FID distributions and subclasses}
A probability measure on $\R$ is said to be {\it freely infinitely divisible} (FID) if it has an $n^{\rm th}$ free convolution root for each $n\in\N$. Let $\FID$ be the set of FID distributions on $\R$. 
Basic results on the class FID were established in \cite{BV93}. Connections between the class $\FID$ and free L\'evy processes were investigated in \cite{Bia98,BNT02,BNT05,BNT06,AHS13}. Bercovici and Pata clarified how the FID distributions appear as the limit of the sum of free i.i.d.\ rvs \cite{BP99}.

We say that $\mu\in\PM_+$ is {\it free regular} (FR) if $\mu$ is FID and $\mu^{\boxplus t} \in \PM_+$ for all $t>0$. This notion was introduced in \cite{PAS12} in terms of the Bercovici--Pata bijection and then further developed in \cite{AHS13} in terms nonnegative free L\'evy processes. The set of free regular distributions is denoted by $\FR$. The class $\FR$ is closed with respect to the weak convergence, see \cite[Proposition 25]{AHS13}. 
A probability measure in $\FID\cap\PM_+$ may not be free regular, but we have a criterion. 
\begin{lem}[Theorem 13 in \cite{AHS13}]\label{lem3}
Suppose $\mu\in\FID\cap \PM_+$ and $\mu$ satisfies either $(i)$ $\mu(\{0 \}) >0$, or $(ii)$ $\mu(\{0 \}) =0$ and $\int_{(0,1)} x^{-1}\,\mu(dx) = \infty$.  Then $\mu\in\FR$. 
\end{lem}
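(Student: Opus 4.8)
The plan is to collapse both hypotheses into a single analytic condition at the origin and then to verify that the free drift of $\mu$ is nonnegative. I would first note that $(i)$ and $(ii)$ are unified by $\int_{[0,\infty)}x^{-1}\,\mu(dx)=\infty$: an atom at $0$ makes the integrand diverge at $y=0$, while $(ii)$ asserts the divergence directly. Writing $G_\mu$ for the Cauchy transform and $F_\mu=1/G_\mu$, this is the same as $G_\mu(0^-)=-\infty$, i.e.\ $F_\mu(0^-)=0$. Since $\mu\in\PM_+$, $G_\mu$ extends analytically and strictly monotonically to $(-\infty,0)$ with $G_\mu(-\infty)=0^-$, so under our condition it is a bijection of $(-\infty,0)$ onto $(-\infty,0)$. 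The strategy is then to combine the free L\'evy--Khintchine representation with the criterion that a measure in $\FID\cap\PM_+$ lies in $\FR$ precisely when it has no semicircular component, its free L\'evy measure sits on $(0,\infty)$, and its free drift $\gamma$ is nonnegative (the free analogue of the classical subordinator description). Writing the $R$-transform in free-subordinator form $R_\mu(\zeta)=\gamma+\int_{(0,\infty)}\frac{x}{1-x\zeta}\,\nu(dx)$, one has $\gamma=\lim_{\zeta\to-\infty}R_\mu(\zeta)$, so it suffices to prove $\lim_{\zeta\to-\infty}R_\mu(\zeta)\ge 0$.

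I would extract this limit from the defining relation $R_\mu(G_\mu(w))=w-F_\mu(w)$, valid for $w<0$. As $w\uparrow 0$ the hypothesis forces $\zeta:=G_\mu(w)\to-\infty$, sweeping out a full neighborhood of $-\infty$ by the bijectivity noted above, while the right-hand side equals $w-1/G_\mu(w)\to 0-0=0$. Hence $\lim_{\zeta\to-\infty}R_\mu(\zeta)=0$, so $\gamma=0\ge 0$ and $\mu\in\FR$. The same limit forecloses the two obstructions absorbed into the characterization: a nonzero semicircular part would contribute a term $a\zeta$ sending $R_\mu(\zeta)\to-\infty$, contradicting the finite value $0$, so $a=0$; and the representing measure in the Pick--Nevanlinna form of $\phi_\mu$ is pinned to $[0,\infty)$ by the monotone real-analytic continuation of $G_\mu$ across $(-\infty,0)$.

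The main obstacle is not this short computation but the structural input behind it, namely the characterization of $\FR$ through nonnegativity of the free drift. Unlike the classical situation, where infinite divisibility on $[0,\infty)$ already forces a subordinator, a measure in $\FID\cap\PM_+$ need not be free regular: the shifted semicircle $\delta_c\boxplus$ (semicircle) with $c$ large is FID and supported on $[0,\infty)$ yet fails free regularity, and it violates both $(i)$ and $(ii)$. Thus the reduction carries the real weight and must be imported from the free L\'evy--Khintchine machinery. Secondary care is needed to justify the interchange $\lim_{\zeta\to-\infty}\int_{(0,\infty)}\frac{x}{1-x\zeta}\,\nu(dx)=0$, which follows by monotone convergence from $\int_{(0,\infty)}\min(1,x)\,\nu(dx)<\infty$, and to confirm that $G_\mu$ is genuinely onto $(-\infty,0)$ so the limit $\zeta\to-\infty$ is realized within the range where $R_\mu(G_\mu(w))=w-F_\mu(w)$ holds; this last point is exactly where the hypotheses $(i)$ and $(ii)$ become indispensable. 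As a sanity check, the conclusion $\gamma=0$ matches the free Poisson law of rate $\lambda<1$ (atom at $0$, pure jump, zero drift), whereas the free Poisson laws of rate $\lambda\ge 1$, which are free regular but satisfy neither hypothesis, are correctly left uncovered.
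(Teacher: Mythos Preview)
The paper does not give its own proof of this lemma; it is simply quoted from \cite{AHS13}. Your proposal is essentially correct, and it reproduces precisely the mechanism the paper itself later invokes (in the proof of Theorem~\ref{thm3}, case (a-FR), via \cite[Proposition 11]{AHS13}): under $(i)$ or $(ii)$ one has $F_\mu(0^-)=0$, so $G_\mu$ is a bijection of $(-\infty,0)$ onto itself, hence $\varphi_\mu(z)=F_\mu^{-1}(z)-z$ is real on all of $(-\infty,0)$ (forcing the representing measure in the Pick--Nevanlinna form, hence the free L\'evy measure, onto $[0,\infty)$) and $\varphi_\mu(0^-)=0$, which both kills any semicircular term and shows the free drift is $0\ge 0$.

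Two small points of order. First, the integrability $\int_{(0,\infty)}\min(1,x)\,\nu(dx)<\infty$ that you use is not part of the general free L\'evy--Khintchine hypothesis $\int\min(1,x^2)\,\nu(dx)<\infty$; it is itself a \emph{consequence} of $\lim_{\zeta\to-\infty}R_\mu(\zeta)=0$ being finite (otherwise the integral in the general representation would drive $R_\mu(\zeta)$ to $-\infty$). So you should derive it, not assume it, before writing $R_\mu$ in ``subordinator form''. Second, your sanity check is slightly off at the boundary: the free Poisson law with rate $\lambda=1$ \emph{does} satisfy hypothesis $(ii)$, since its density behaves like $x^{-1/2}$ near $0$; only $\lambda>1$ fails both $(i)$ and $(ii)$.
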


There is a useful subclass of $\FID$, called $\UI$. The idea already appeared implicitly in \cite{BBLS11} and the explicit definition was given in \cite{AH13a}. 
The following form of definition is in \cite{BH13}. To define the class $\UI$, let $G_\mu$ (or $G_X$ if $X\sim \mu$) denote the Cauchy transform 
\be
G_\mu(z)=\int_\R \frac{1}{z-x}\,\mu(dx),\qquad z\in \C^+,  
\ee
where $\C^\pm$ denote the complex upper and lower half-planes respectively. 
In \cite {BV93} Bercovici and Voiculescu proved that for any $a>0$, there exist $\lambda, M, b>0$ such that $G_\mu$ is univalent in the truncated cone 
\be\label{cone}
\Gamma_{\lambda,M}=\{z\in\C^+\mid \lambda |\re(z)| <\im(z), \im(z) >M\}
\ee
and $G_\mu(\Gamma_{\lambda, M})$ contains the triangular domain 
\be
\Delta_{a,b} =\{w\in \C^-\mid a |\re(w)| < -\im(w), \im(w)>-b\}. 
\ee
So we may define a right compositional inverse $G_\mu^{-1}$ in $\Delta_{a,b}$. 
\begin{defi}[Definition 5.1 in \cite{AHS13}]
A probability measure $\mu$ on $\R$ is said to be in class $\UI$ (standing for univalent inverse Cauchy transform) if the right inverse map $G_\mu^{-1}$, originally defined in a triangular domain $\Delta_{a,b} \subset \C^-$, has univalent analytic continuation to $\C^-$. 
\end{defi}
The importance of this class is based on the following lemma proved in \cite[Proposition 5.2 and p.\ 2763]{AH13a}. 
\begin{lem}\label{lem1}
$\UI\subset\FID$. Moreover, $\UI$ is w-closed (i.e.\ closed with respect to weak convergence). 
\end{lem}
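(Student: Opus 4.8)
The plan is to verify the two-part criterion, due to Bercovici--Voiculescu \cite{BV93}, that $\mu\in\FID$ precisely when its Voiculescu transform $\phi_\mu$---a priori defined only on a truncated cone in $\C^+$---(a) continues analytically to all of $\C^+$ and (b) satisfies $\im\phi_\mu(z)\le 0$ there (the free L\'evy--Khintchine characterization). Recall the relation $\phi_\mu(z)=G_\mu^{-1}(1/z)-z$, valid wherever the right inverse is defined. Writing $H:=G_\mu^{-1}$ for the univalent continuation to $\C^-$ granted by $\mu\in\UI$, the substitution $w=1/z$ (a biholomorphism $\C^+\to\C^-$) gives (a) immediately: $\phi_\mu(z)=H(1/z)-z$ is analytic on all of $\C^+$ and agrees with the original transform on the cone. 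So the entire content is the sign condition (b).

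First I would translate (b) into a statement on $\C^-$. Setting $K(w):=H(w)-1/w$, one has $K(1/z)=\phi_\mu(z)$, so (b) is equivalent to $\im K\le 0$ on $\C^-$. The function $K$ is analytic on $\C^-$, and since $K(w)=\phi_\mu(1/w)=o(1/w)$ as $w\to 0$ it is harmlessly controlled at the boundary point $0$; thus $\im K$ is harmonic on $\C^-$ and the problem becomes a boundary-value problem. Let $F_\mu:=1/G_\mu$ be the reciprocal Cauchy transform, and let $U_0\subset\C^-$ be the connected component containing $\Delta_{a,b}$ of the open set where $\im H>0$. On $U_0$ the identity $G_\mu\circ H=\mathrm{id}$ persists by analytic continuation (both sides are analytic on $U_0$ since $H(U_0)\subset\C^+$, and they agree on $\Delta_{a,b}$); hence with $\zeta=H(w)\in\C^+$ we get $F_\mu(\zeta)=1/w$, and the standard Pick inequality $\im F_\mu(\zeta)\ge\im\zeta$ for reciprocal Cauchy transforms yields exactly $\im(1/w)\ge\im H(w)$, i.e.\ $\im K\le 0$ on $U_0$. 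On the complementary region where $\im H\le 0$ one has $\im K=\im H-\im(1/w)<0$ trivially, because $\im(1/w)>0$ throughout $\C^-$.

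The remaining step---and the main obstacle---is to upgrade these partial inequalities to $\im K\le 0$ on the whole half-plane by the maximum principle for the harmonic function $\im K$. This requires controlling the boundary values on $\R$ and the growth at $\infty$, and it is precisely here that univalence (rather than mere analytic continuation) is essential: at a real boundary point $x\ne 0$ a continuous boundary value $H(x)=\zeta$ would satisfy $G_\mu(\zeta)=x\in\R$, which is impossible for $\zeta\in\C^+$ since $G_\mu$ maps $\C^+$ strictly into $\C^-$; hence $\im H(x)\le 0$ and $\limsup_{w\to x}\im K\le 0$. Univalence also supplies the boundary regularity and the a priori distortion bounds (Koebe-type) needed to run a Phragm\'en--Lindel\"of argument at $\infty$ and to exclude that the injective image of $H$ doubles back and violates the inequality on a stray component away from $U_0$. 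Combining these, $\im K\le 0$ on $\C^-$, which is (b), so $\mu\in\FID$.

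For weak closure, suppose $\mu_n\in\UI$ with $\mu_n\to\mu$ weakly. Then $G_{\mu_n}\to G_\mu$ locally uniformly on $\C^+$, and the inverse transforms $H_n=G_{\mu_n}^{-1}$ may be taken to converge to $G_\mu^{-1}$ on a common triangular domain $\Delta_{a,b}$, the uniformity following from tightness as in \cite{BV93,BP99}. Each $H_n$ is univalent on $\C^-$; since the family of univalent functions on a fixed domain is normal for spherically locally uniform convergence, and the limit is already pinned to the non-constant map $G_\mu^{-1}$ on $\Delta_{a,b}$, degeneration to a constant is excluded. Hurwitz's theorem then makes the locally uniform limit $H$ univalent on all of $\C^-$, and $H$ continues $G_\mu^{-1}$; therefore $\mu\in\UI$. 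The delicate point here is the normal-families / Carath\'eodory-kernel argument ensuring simultaneously the convergence on all of $\C^-$ and the univalence of the limit.
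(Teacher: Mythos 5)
First, a framing remark: the paper does not prove this lemma itself; it quotes it from \cite[Proposition 5.2 and p.\ 2763]{AH13a}, so the honest comparison is with that cited argument and with the paper's proof of the analogous $\UIS$ statement.

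Your proof of the inclusion $\UI\subset\FID$ has a genuine gap, and you have located it yourself without closing it: the components of $\{w\in\C^-:\im G_\mu^{-1}(w)>0\}$ other than $U_0$. On such a stray component the identity $G_\mu\circ H=\mathrm{id}$ is simply not available --- the identity theorem only propagates it through the connected component containing $\Delta_{a,b}$ --- so neither the Pick inequality $\im F_\mu(\zeta)\geq\im\zeta$ nor your boundary argument (``$H(x)=\zeta$ would force $G_\mu(\zeta)=x\in\R$'') applies there; the latter is circular, since it presupposes exactly the identity you lack. The maximum principle then has nothing to bite on: $\im K$ is harmonic, but on a component touching $\R$ or reaching $\infty$ you control neither its boundary values nor its growth (the asymptotic $\varphi_\mu(z)=o(z)$, hence $K(w)=o(1/w)$, is only a \emph{nontangential} statement, valid as $w\to0$ inside $\Delta_{a,b}$, not unrestrictedly in $\C^-$), and neither Phragm\'en--Lindel\"of nor Koebe distortion supplies that control --- Koebe bounds the size of $H$, not the sign of $\im(H(w)-1/w)$. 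This missing step is precisely the content of the cited Proposition 5.2 of \cite{AH13a}, where univalence enters globally and not merely as local injectivity; your write-up settles the easy regions and defers the one that carries the whole difficulty.

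The weak-closedness part is essentially sound and close in spirit to what the paper does for $\UIS$: obtain locally uniform convergence of $G_{\mu_n}^{-1}$ on all of $\C^-$ and invoke Hurwitz, with non-degeneracy of the limit guaranteed by its behavior on $\Delta_{a,b}$. One caveat: the family of \emph{all} univalent functions on a fixed domain is not normal in the spherical metric (take $f_n(z)=nz$ on the disk: no subsequence converges spherically near $0$), so you cannot quote normality outright. You must first use the convergence on a common $\Delta_{a,b}$ (uniform in $n$ by tightness, as you say) to bound $H_n$ and $H_n'$ at a base point, and then propagate local boundedness over $\C^-$ by Koebe distortion along chains of disks before applying Montel and Hurwitz. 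Alternatively, the paper's route for $\UIS$ avoids this entirely by citing \cite[Theorem 3.8]{BNT02} for locally uniform convergence of $\varphi_{\mu_n}$ on $\C^+$, which is legitimate once $\UI\subset\FID$ and the weak closedness of $\FID$ are in hand. Either repair works; as written, the normality claim is false.
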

To consider symmetric distributions, a symmetric version of $\UI$ is useful. The following definition is equivalent to \cite[Definition 2.5(2)]{Has14}. 
\begin{defi} \label{UIS}
A symmetric probability measure $\mu$ is said to be in class $\UIS$ if (a) the right inverse map $G_\mu^{-1}$, defined in a domain $\Delta_{a,b}$, has univalent analytic continuation to a neighborhood of $\I(-\infty,0)$, and (b) the right inverse $G_\mu^{-1}$, defined in the domain $\Delta_{a,b}$, has univalent analytic continuation to $\C^- \cap \I\C^-$ such that $G^{-1}_\mu(\C^- \cap \I\C^-)\subset \C^- \cup \I\C^-$. 
\end{defi}
Condition (a) cannot be dropped from the definition: $G_{\frac{1}{2}(\delta_{-1}+\delta_1)}^{-1}(z)= \frac{1+\sqrt{1+4 z^2}}{2 z}$  satisfies condition (b) but not condition (a).

If a symmetric probability measure $\mu$ belongs to $\UI$ then $\mu \in\UIS$ by definition, but the converse is not true. In fact a probability measure $\mu\in\UIS$ belongs to $\UI$ iff $G_\mu^{-1}(\C^-\cap\I\C^-) \subset \I\C^-$.  

The analogue of Lemma \ref{lem1} holds true. 
\begin{lem}
$\UIS \subset \FID$. Moreover $\UIS$ is w-closed. 
\end{lem}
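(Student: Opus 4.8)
The plan is to mirror the proof of Lemma~\ref{lem1} that $\UI\subset\FID$, using the symmetry of $G_\mu$ to compensate for the weaker hypotheses in Definition~\ref{UIS}. I will work with the $R$-transform $R_\mu(w)=G_\mu^{-1}(w)-1/w$ and the free L\'evy--Khintchine description from \cite{BV93}, according to which a probability measure $\mu$ lies in $\FID$ precisely when $R_\mu$ continues analytically from $\Delta_{a,b}$ to all of $\C^-$ with $\im R_\mu\le0$ there (equivalently, the Voiculescu transform $\phi_\mu(z)=R_\mu(1/z)$ has nonpositive imaginary part on $\C^+$). A symmetric $\mu$ carries two symmetries of $G_\mu^{-1}$: the conjugation relation $\overline{G_\mu^{-1}(w)}=G_\mu^{-1}(\bar w)$, valid for any measure, and the oddness $G_\mu^{-1}(-w)=-G_\mu^{-1}(w)$, valid since $G_\mu$ is odd. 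Composing them gives $R_\mu(-\bar w)=-\overline{R_\mu(w)}$, so that $\im R_\mu(-\bar w)=\im R_\mu(w)$; since $w\mapsto-\bar w$ interchanges the third and fourth quadrants, it suffices to establish $\im R_\mu\le0$ on the closed fourth quadrant $Q=\overline{\C^-\cap\I\C^-}$.

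On this quadrant I would first read off the boundary values of $R_\mu$ on $\partial Q$. Along the positive real axis the conjugation relation forces $G_\mu^{-1}$, and hence $R_\mu$, to be real, so $\im R_\mu=0$ there. Along the negative imaginary axis the identity $G_\mu^{-1}(-\bar w)=-\overline{G_\mu^{-1}(w)}$ (a consequence of both symmetries) forces $G_\mu^{-1}(w)$ to be purely imaginary; since $1/w$ is then purely imaginary too, $R_\mu(w)=G_\mu^{-1}(w)-1/w$ is purely imaginary, and near the origin, where $G_\mu^{-1}$ is still the genuine inverse mapping $\C^-$ into $\C^+$, one checks directly that its imaginary part is nonpositive. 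Condition~(a) of Definition~\ref{UIS}, which provides a single-valued univalent continuation along the whole of $\I(-\infty,0)$, is exactly what propagates this sign along the entire ray and guarantees that the two quadrant branches glue consistently; the example $\tfrac12(\delta_{-1}+\delta_1)$ following the definition shows that this gluing genuinely fails without~(a).

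It then remains to pass from the boundary to the interior of $Q$. Here the one-sided image bound in condition~(b), namely $G_\mu^{-1}(\C^-\cap\I\C^-)\subset\C^-\cup\I\C^-$, plays the role that global univalence on $\C^-$ plays in the $\UI$ case: it forces $G_\mu^{-1}$ to omit the open second quadrant, which bounds the harmonic function $\im R_\mu$ from above and supplies the growth control at the corner $0$ and at $\infty$ needed for a Phragm\'en--Lindel\"of / maximum-principle argument on $Q$. Combining the boundary values of the previous paragraph with this control yields $\im R_\mu\le0$ throughout $Q$, whence $\im R_\mu\le0$ on all of $\C^-$ by the reflection symmetry, and therefore $\mu\in\FID$. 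I expect the technical heart, and the main obstacle, to be exactly this interior step: verifying that the mere omission of the second quadrant is enough to run the maximum principle at the corner and at infinity, without the stronger control that full $\C^-$-univalence gave in Lemma~\ref{lem1}.

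Finally, for weak closedness I would argue as for $\UI$. If $\mu_n\in\UIS$ and $\mu_n\to\mu$ weakly, then $\mu$ is symmetric and $G_{\mu_n}\to G_\mu$ locally uniformly on $\C^+$, so the right inverses converge locally uniformly on $Q$. Since condition~(b) makes every $G_{\mu_n}^{-1}$ omit the open second quadrant, Montel's theorem gives normality, the limit is analytic, and Hurwitz's theorem makes it univalent or constant; the constant alternative is ruled out by the local inverse relation near the real axis. The closed image condition in~(b) and condition~(a) survive the limit because a nonconstant analytic (hence open) map cannot send an open set into the bounding axes, so $\mu\in\UIS$. The only delicate points here are establishing normality and excluding the degenerate Hurwitz case.
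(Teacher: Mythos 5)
For the inclusion $\UIS\subset\FID$ the paper gives no new argument at all: it simply cites \cite[Lemma 2.7]{Has14}. Your substitute maximum-principle proof has genuine gaps. First, Definition \ref{UIS} only provides $G_\mu^{-1}$ on $\C^-\cap\I\C^-$ and on a neighborhood of $\I(-\infty,0)$; nothing gives a continuous extension to the positive real axis, so the boundary value ``$\im R_\mu=0$ on $(0,\infty)$'' is unjustified (the reflection $\overline{G_\mu^{-1}(w)}=G_\mu^{-1}(\bar w)$ is only usable where $G_\mu^{-1}$ is defined on both sides of $\R$, and $\Delta_{a,b}$ meets $\R$ only at $0$). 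Second, on $\I(-\infty,0)$ condition (b) forces only $G_\mu^{-1}(\I(-\infty,0))\subset\I\R$, which does not exclude values on $\I(0,\infty)$ where $1/w$ also lies; hence $R_\mu$ could a priori vanish and change sign along the ray, and ``condition (a) propagates the sign'' is an assertion, not an argument. Third, the interior step you yourself flag as the main obstacle does fail as stated: $\C^-\cup\I\C^-$ is the complement of the closed second quadrant and contains the whole open first quadrant, so condition (b) does \emph{not} bound the harmonic function $\im G_\mu^{-1}$ (hence $\im R_\mu$) from above, and there is nothing to feed into a Phragm\'en--Lindel\"of argument on $Q$. This half of your proposal is therefore a plan, not a proof.

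For the w-closedness you reproduce the easier half of the paper's argument (Hurwitz plus the open mapping theorem for condition (b) on $\C^-\cap\I\C^-$), modulo the point that locally uniform convergence of the \emph{continued} inverses on $\C^-$ does not follow from convergence of $G_{\mu_n}$ on $\C^+$; the paper obtains it from \cite[Theorem 3.8]{BNT02} via $G_{\mu_n}^{-1}(z)=\varphi_{\mu_n}(z^{-1})+z^{-1}$, using that all the measures involved are already known to be FID. The serious gap is condition (a): it demands univalence in \emph{some} neighborhood of $\I(-\infty,0)$, and these neighborhoods may shrink as $n\to\infty$, so neither Hurwitz nor your open-mapping remark (which only concerns the image condition in (b)) delivers (a) for the limit. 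This is exactly the difficulty the paper isolates and then solves: local univalence gives $(G_{\mu_n}^{-1})'\neq0$ on all of $\C^-$, non-vanishing of the derivative survives the local uniform limit, $(G_\mu^{-1})'$ is real on $\I(-\infty,0)$ by symmetry and hence of constant (positive) sign there, and Noshiro--Warschawski then produces a neighborhood of univalence. Some such argument is indispensable; the example $\frac{1}{2}(\delta_{-1}+\delta_1)$ given right after Definition \ref{UIS} shows that (a) is a genuine constraint not implied by (b), so it cannot simply be ``waved through'' the limit.
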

\begin{proof}
The first claim was proved in \cite[Lemma 2.7]{Has14}, but the second claim is new. 
Suppose that $\mu_n \in\UIS$ and $\mu_n\to\mu$ (which implies that $\mu\in\FID$). By \cite[Theorem 3.8]{BNT02} the Voiculescu transform $\varphi_{\mu_n}$ converges to $\varphi_\mu$ uniformly on each compact subset of $\C^+$ and therefore so does $G_{\mu_n}^{-1}(z) = \varphi_{\mu_n}(z^{-1})+z^{-1}$ to $G_{\mu}^{-1}(z) =\varphi_{\mu}(z^{-1})+z^{-1}$ on each compact subset of $\C^-$. Since $G_\mu^{-1}$ is not a constant, it is univalent in $\C^- \cap \I\C^-$ by Hurwitz's theorem. Taking the limit we have $G^{-1}_\mu(\C^- \cap \I\C^-)\subset \overline{\C^- \cup \I\C^-}$, but since $G_\mu^{-1}$ is analytic and not a constant, $G^{-1}_\mu(\C^- \cap \I\C^-)$ is an open set by the open mapping theorem, so in fact $G^{-1}_\mu(\C^- \cap \I\C^-)$ is contained in $\C^- \cup \I\C^-$. 

Hurwitz's theorem is not useful for proving the univalence around $\I(-\infty,0)$ since the neighborhoods of $\I(-\infty,0)$ may shrink as $n\to\infty$, and so an alternative idea is needed. Our assumptions imply that $G_{\mu_n}^{-1}$ are locally univalent, i.e.\ $(G_{\mu_n}^{-1})'(z) \neq0$ for all $z\in\C^-$ (note that $G_{\mu_n}^{-1}$ has symmetry with respect to the $y$-axis). Since a zero of an analytic function changes continuously with respect to the local uniform topology of analytic functions, $(G_\mu^{-1})'$ does not have a zero in $\C^-$; otherwise $(G_{\mu_n}^{-1})'$ would have a zero for large $n$. In particular  $(G_\mu^{-1})'(z)\neq0$ in $\I(-\infty,0)$. Since $\mu$ is symmetric, $G_\mu^{-1}(\I(-\infty,0))\subset \I\R$ and hence $(G_\mu^{-1})'(\I(-\infty,0))\subset\R$. Since the derivative $(G_\mu^{-1})'(z)$ does not have a zero, the sign does not change (in fact we can show that $(G_\mu^{-1})'(z)>0$ in $\I(-\infty,0)$). 
Therefore $G_\mu^{-1}$ is univalent in a neighborhood of $\I(-\infty,0)$ by applying Noshiro--Warschawski's theorem \cite[Theorem 12]{Nos34}, \cite[Lemma 1]{War35} to a neighborhood of $\I(-\infty,0)$ where $\re((G_\mu^{-1})'(z))>0$.  
\end{proof}

\subsection{Probability measures to be treated} 
We introduce several (classes of) probability measures to be treated in this paper. 

(1) The semicircle distribution $\WS(m, \sigma^2)$ is the probability measure with pdf 
\be
\frac{1}{2\pi \sigma^2} \sqrt{4\sigma^2-(x-m)^2}\, 1_{(m-2\sigma, m+2\sigma)}(x), \qquad m\in\R,\sigma>0. 
\ee
By using the $R$-transform one can compute the inverse Cauchy transform 
\be
G_{\WS(m,\sigma^2)}^{-1}(z) = m + \sigma^2 z + \frac{1}{z}. 
\ee One can prove that $G_{\WS(m,\sigma^2)}^{-1}$ is a bijection from $\C^-$ onto $\C^+\cup(m-2\sigma,m+2\sigma)\cup\C^-$, so that $\WS(m,\sigma^2) \in \UI$ and in particular $\WS(0,\sigma^2)\in\UIS$.

(2) The free Poisson distribution (or Marchenko-Pastur distribution) $\MP(p, \theta)$ is 
\be
\begin{split}
& \max\{1-p,0\} \delta_0 + \frac{\sqrt{(\theta(1+\sqrt{p})^2-x)(x-\theta(1-\sqrt{p})^2)}}{2\pi\theta x}\,1_{(\theta(1-\sqrt{p})^2, \theta(1+\sqrt{p})^2)}(x)\,d x, 
\end{split}
\ee
where $p,\theta>0$. The parameter $\theta$ stands for the scale parameter. Since $\MP(p,\theta)^{\boxplus t} = \MP(p t, \theta) \in \PM_+$ for any $t>0$, $\MP(p,\theta)$ is free regular. The inverse Cauchy transform is given by 
\be
G_{\MP(p,\theta)}^{-1}(z) = \frac{1}{z} + \frac{p\theta}{1-\theta z}. 
\ee
One can show that $G_{\MP(p,\theta)}^{-1}$ is a bijection from $\C^-$ onto $\C^+\cup(\theta(\sqrt{p}-1)^2,\theta(\sqrt{p}+1)^2)\cup\C^-$ and so $\MP(p,\theta) \in\UI$. 

(3) The beta distribution $\Beta(p,q)$ is the probability measure with pdf 
\be
\frac{1}{B(p,q)} \cdot x^{p-1}(1-x)^{q-1}\,1_{(0,1)}(x),\qquad p,q>0. 
\ee
The beta distribution belongs to the class $\UI$ if 
(i) $p, q \geq \frac{3}{2}$,  (ii) $0<p\leq \frac{1}{2},~p+q \geq 2$ or (iii) $0<q\leq \frac{1}{2},~p+q \geq 2$ (see \cite{Has14}). 

(4) The positive Boolean (strictly) stable distribution $\textbf{b}_\alpha$, introduced by Speicher and Woroudi \cite{SW97}, is the distribution with pdf 
\be
\frac{\sin(\alpha \pi)}{\pi} \cdot \frac{x^{\alpha-1}}{x^{2\alpha} +2(\cos \pi\alpha) x^\alpha+1}\, 1_{(0,\infty)}(x),\qquad \alpha \in(0,1). 
\ee 
We then consider the classical mixtures of positive Boolean stable distributions 
\be
\B_\alpha=\{\mu \circledast \bs_\alpha \mid \mu \in \PM_+ \}, \qquad \alpha \in(0,1), 
\ee
where $\circledast$ is classical multiplicative convolution, i.e.\ $X Y\sim \mu\circledast \nu$ when $X\sim\mu, Y\sim\nu$ and $X,Y$ are independent. This class was introduced and investigated in \cite{AH}. It is known that $\B_\alpha \subset \B_\beta$ when $0<\alpha \leq \beta<1$ and $\B_{1/2} \subset \UI\cap \FR\cap\ME$. 
The property $\B_{1/2} \subset\FR$ is not explicitly stated in \cite{AH}, but it is a consequence of the fact $\B_{1/2} \subset \FID \cap \PM_+$ proved in \cite[Proposition 4.21]{AH}, the fact that any measure in $\B_{1/2}\setminus\{\delta_0\}$ has a pdf which diverges to infinity at 0, and Lemma \ref{lem3} above. 

(5) We consider a further subclass of HCM distributions with pdf 
\be\label{sub HCM}
f(x) = C\cdot x^{p-1} \exp\left(\int_{(0,\infty)} \log\left(\frac{1}{t+x}\right) \Gamma(dt)\right), 
\ee
where $\Gamma$ is a finite measure on $(0,\infty)$, $0<p < \Gamma((0,\infty))$ and $\int_{(0,\infty)} |\log t | \,\Gamma(d t)<\infty$ (these conditions ensure the integrability of $f$ and hence we can take the normalizing constant $C>0$). This subclass is a natural generalization of pdfs of the form \eqref{pdf HCM}, but does not cover all HCM pdfs. 
This pdf is of the form of Markov-Krein transform \cite{Ker98}.

\section{Main results}\label{sec3}
\subsection{Statements}
Now we are ready to state the main theorems. Independence of random variables means classical independence below. 
\begin{thm}\label{thm1}
Suppose $X\sim \HCM$ whose pdf is of the form \eqref{sub HCM} and satisfies 
\be\label{cond}
0< p \leq \frac{1}{2},  \qquad 0<-p+ \Gamma((0,\infty)) \leq  \frac{1}{2}.
\ee 
If $W \geq 0$ is a rv  independent of $X$ and $|r|\geq1$,  then $W X^r \sim \FR\cap\UI$. The case $r\leq-1$ can be considered only when $W>0$ almost surely.  Moreover, $X^{-1}$ also has a pdf of the form \eqref{sub HCM} that satisfies condition \eqref{cond}. 
\end{thm}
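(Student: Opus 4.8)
The plan is to prove membership in $\UI$ (whence $\FID$ by Lemma \ref{lem1}) and then upgrade to $\FR$ by Lemma \ref{lem3}. Write $\rho$ for the law of $W$ and $\mu_r$ for the law of $X^r$, so that $WX^r\sim\rho\circledast\mu_r=\int_{(0,\infty)}(D_w\mu_r)\,\rho(dw)$, where $D_w$ is dilation by $w$. I would first dispose of the negative powers by the inversion symmetry asserted in the theorem: a direct change of variables shows that $1/X$ has density $C'\,y^{p'-1}\exp\big(\int_{(0,\infty)}\log\frac{1}{s+y}\,\Gamma'(ds)\big)$ with $p'=-p+\Gamma((0,\infty))$ and $\Gamma'$ the pushforward of $\Gamma$ under $t\mapsto 1/t$; since $p'$ and $-p'+\Gamma'((0,\infty))=p$ are exactly the two quantities in \eqref{cond} with their roles exchanged, the hypotheses are preserved. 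Thus $WX^r=W(1/X)^{|r|}$ reduces the case $r\le-1$ (where $W>0$ a.s.\ guarantees the reduction takes place between genuine positive variables) to the case $r\ge1$ applied to $1/X$. From now on I assume $r\ge1$.

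I would then reduce $\FR$ to $\UI$. Granting $WX^r\in\FID\cap\PM_+$, Lemma \ref{lem3} applies: if $W$ has an atom at $0$ then so does $WX^r$ and case (i) yields $\FR$; if not, a change of variables shows the density of $X^r$ behaves like $y^{p/r-1}$ near $0$, so $\int_{(0,1)}y^{-1}\,(\rho\circledast\mu_r)(dy)=\infty$ (as $p/r-2<-1$) and the same persists after mixing by $W$, giving case (ii). Hence it suffices to prove $WX^r\in\UI$.

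The core is complex-analytic. By definition $WX^r\in\UI$ means $G_{\rho\circledast\mu_r}^{-1}$ continues univalently to $\C^-$, and I would prove this through the boundary values of $G_{\rho\circledast\mu_r}$ via the superposition identity $G_{\rho\circledast\mu_r}(z)=\int_{(0,\infty)} w^{-1}G_{\mu_r}(z/w)\,\rho(dw)$. The decisive structural observation is that dilation leaves the argument of boundary values unchanged: $\arg\big(w^{-1}G_{\mu_r}(x/w+i0)\big)=\arg G_{\mu_r}(x/w+i0)$ because $w^{-1}>0$. A Stieltjes computation at the leading singularity $x^{p-1}$ of the density of $X$ gives $\arg G_\mu(x+i0)\to-\pi p$ as $x\to0^+$ and $\to0^-$ as $x\to+\infty$; after the power map the corresponding limit at $0^+$ becomes $-\pi p/r$, so $p\le\tfrac12$ and $r\ge1$ force $\arg G_{\mu_r}(x+i0)\in(-\tfrac{\pi}{2},0)$ for all $x>0$, the analogous bound near $+\infty$ being governed, through the inversion symmetry of the first step, by $\Gamma((0,\infty))-p\le\tfrac12$. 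Because all these boundary arguments lie in a common open cone of opening $<\pi$ that is independent of $w$, the positive superposition $\int w^{-1}G_{\mu_r}(z/w)\,\rho(dw)$ keeps its boundary values inside the same cone and never vanishes; this is precisely the feature that survives the arbitrary mixing by $W$.

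The main obstacle is to convert this phase control into genuine univalence of $G_{\rho\circledast\mu_r}^{-1}$ on all of $\C^-$. Confinement to a cone shows that the image of $(0,\infty)$ stays off the real axis and that the transform never vanishes, but univalence additionally requires the curve traced by $G_{\rho\circledast\mu_r}$, together with the images of the two real rays where it is real, to cover $\partial\C^-$ exactly once; establishing this monotone boundary correspondence and then invoking the argument principle is the technical heart, and it is here that the thresholds $p=\tfrac12$ and $\Gamma((0,\infty))-p=\tfrac12$ are attained and the estimates become tight. Since $\UI$ is not convex one genuinely cannot reduce the mixture to its constituent dilations, so this monotonicity must be proved uniformly in $w$. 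Once it holds, Lemma \ref{lem1} gives $\FID$, Lemma \ref{lem3} upgrades to $\FR$ as above, and the inversion computation of the first paragraph simultaneously delivers the ``Moreover'' assertion and closes the $r\le-1$ case.
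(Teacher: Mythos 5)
Your outer architecture (negative powers via the inversion $X\mapsto X^{-1}$, the ``Moreover'' computation, and the upgrade from $\UI$ to $\FR$ via Lemma \ref{lem3}) matches the paper and is fine. But the core analytic step has a genuine gap, in two respects. First, the cone confinement you assert --- $\arg G_{\mu_r}(x+\I 0)\in(-\tfrac{\pi}{2},0)$ for \emph{all} $x>0$ --- is deduced only from the two limits at $x\to0^+$ and $x\to+\infty$; nothing prevents the argument from leaving the cone at intermediate $x$, and proving it globally would amount to showing $\re\,G_{\mu_r}(x+\I 0)>0$ everywhere on the support, i.e.\ a sign condition on a principal-value integral that does not follow from local asymptotics. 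Second, and more fundamentally, even a full cone bound for the boundary values of $G$ on $\partial\C^+=\R+\I 0$ cannot yield $\UI$, because $G(\C^+)$ need not contain $\C^-$: already for $\bs_{1/2}$ (density $\pi^{-1}x^{-1/2}(1+x)^{-1}$, the model case of Theorem \ref{thm1}) one has $G(z)=1/(z-\sqrt{-z})$, $\arg G(x+\I 0)\in(-\tfrac{\pi}{2},0)$ for all $x>0$, and yet $G(\C^+)\not\supset\C^-$. The inverse $G^{-1}$ on $\C^-$ can only be reached by analytically continuing $G$ \emph{across the support into} $\C^-$ via $G(z)=\widetilde{G}(z)-2\pi\I f(z)$ (Lemma \ref{lem analytic cauchy}), and then the relevant boundary data is not $G(x+\I 0)$ for $x>0$ but $G(x-\I 0)$ for $x<0$ on the lower edge of the cut, where $\widetilde{G}(x)$ is real and the whole condition collapses to the pointwise inequality $\re f(x-\I 0)\le0$. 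That inequality is checked explicitly from the product form \eqref{pdf HCM}: $\arg(-f(x-\I 0))$ lies in $(-\pi p s,\,-\pi p s+\pi s\sum_k\gamma_k)$ with $s=1/r$, and \emph{both} halves of \eqref{cond} are used simultaneously to trap this interval inside $(-\tfrac{\pi}{2},\tfrac{\pi}{2})$ --- not, as you suggest, one condition at $0$ and one at $\infty$. The winding-number argument (your ``technical heart'', which you leave unexecuted) is then run on the contour hugging the two edges of $(-\infty,0]$ plus small and large circles, using the asymptotics $G(z)\sim -a(-z)^{ps-1}$ at $0$.

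Your instinct about the mixture is the right one but is applied to the wrong object: positivity of the mixing does preserve a half-plane condition, and the paper exploits exactly this, but at the level of the density rather than the Cauchy transform. For discrete $\rho=\sum_k\lambda_k\delta_{t_k}$ the density of $\rho\circledast\mu$ is $f_\rho(x)=x^{p-1}\sum_k\lambda_k t_k^{-p}g(x/t_k)$, which satisfies the same hypotheses (A1)--(A5) of Lemma \ref{lem4} as $f$ does --- in particular $\re f_\rho(x-\I 0)\le0$ because each summand satisfies it --- so the entire contour argument applies verbatim to the mixture, and general $W$ follows from the w-closedness of $\UI$ and $\FR$. (You also omit the final approximation of a general density \eqref{sub HCM} by finite products \eqref{pdf HCM} via Scheff\'e's lemma, but that is minor.) To repair your proof you would need to replace the analysis of $\arg G(x+\I 0)$ on $(0,\infty)$ by the analytic continuation of $G$ into $\C^-$ and the verification of $\re f(x-\I 0)\le0$ on $(-\infty,0)$, and then actually carry out the argument-principle step.
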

\begin{rem} If one uses \cite[Theorems 4.1.1, 4.1.4]{Bon92} and the factorization of a gamma rv as a product of a beta rv and a gamma rv on p.\ 14 of \cite{Bon92}, one can show that $W X^r\sim\ME$ and thus we get $W X^r \sim \FR \cap\UI\cap \ME$.  
\end{rem}
Since the Boolean stable law $\bs_{1/2}$ has the pdf $\pi^{-1}x^{-1/2}(x +1)^{-1}$ which satisfies the assumptions of Theorem \ref{thm1}, we have  
\begin{cor} \label{cor1} 
If $X \sim \B_{1/2}$ and $|r|\geq1$ then $X^r \sim \FR\cap\UI$ $($$r\leq-1$ can be considered only when the mixing measure does not have an atom at $0$$)$.   
\end{cor}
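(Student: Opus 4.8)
The plan is to obtain Corollary~\ref{cor1} as a direct application of Theorem~\ref{thm1}, the only real content being to recognize every member of $\B_{1/2}$ as a weighted power of a Boolean stable variable. First I would check that $\bs_{1/2}$ itself is admissible in Theorem~\ref{thm1}: its pdf $\pi^{-1}x^{-1/2}(x+1)^{-1}$ is exactly of the form \eqref{sub HCM} with $C=\pi^{-1}$, $p=\tfrac12$ and $\Gamma=\delta_1$. With these parameters $\Gamma((0,\infty))=1$ and $\int_{(0,\infty)}|\log t|\,\Gamma(dt)=0<\infty$, while $0<p=\tfrac12\le\tfrac12$ and $0<-p+\Gamma((0,\infty))=\tfrac12\le\tfrac12$, so condition \eqref{cond} holds. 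Hence $\bs_{1/2}$ is a legitimate instance of the variable $X$ in Theorem~\ref{thm1}.

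Next I would unwind the definition of the mixture class. If $X\sim\B_{1/2}$ then $X\sim\mu\circledast\bs_{1/2}$ for some mixing measure $\mu\in\PM_+$, so we may write $X=VY$ with $V\sim\mu$ and $Y\sim\bs_{1/2}$ independent and nonnegative. Because $V,Y\ge0$ (and $V>0$ a.s.\ whenever $r<0$, see below), for any real $r$ one has $X^r=(VY)^r=V^rY^r$. Setting $W:=V^r$, which is nonnegative and, being a function of $V$, independent of $Y$, this reads $X^r=W\,Y^r$. I would then apply Theorem~\ref{thm1} with the Boolean stable variable $Y$ playing the role of its $X$ and with weight $W$: for $|r|\ge1$ the theorem yields $W\,Y^r\sim\FR\cap\UI$, that is, $X^r\sim\FR\cap\UI$, as claimed.

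The only delicate point I expect is the sign of $r$ and the matching of atom conditions. For $r\ge1$ the weight $W=V^r$ is a bona fide nonnegative random variable irrespective of whether $\mu$ charges $0$, so no restriction is needed. For $r\le-1$, however, $V^r$ is defined only where $V>0$, and the hypothesis of Theorem~\ref{thm1} that $W>0$ almost surely forces $V>0$ almost surely, i.e.\ $\mu(\{0\})=0$; this is precisely the parenthetical requirement in the statement that the mixing measure carry no atom at $0$. I expect this bookkeeping between the two atom conditions, together with the verification that the reindexing $X=VY\mapsto(Y,V^r)$ lands exactly in the hypotheses of Theorem~\ref{thm1}, to be essentially the only thing to get right; there is no genuine analytic obstacle beyond Theorem~\ref{thm1} itself.
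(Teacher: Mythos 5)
Your proposal is correct and matches the paper's own (very brief) justification: the paper likewise observes that the pdf $\pi^{-1}x^{-1/2}(x+1)^{-1}$ of $\bs_{1/2}$ satisfies the hypotheses of Theorem \ref{thm1} (with $p=\tfrac12$, $\Gamma=\delta_1$) and then absorbs the mixing variable into the weight $W$. Your explicit bookkeeping of $X^r=V^rY^r$ and of the atom condition for $r\le-1$ is exactly the intended reading.
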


\begin{thm}\label{thm2}
Suppose that $X\sim \Beta(p,q)$. If $r \geq1$, $q \in [\frac{3}{2}, \frac{5}{2}]$, $0<2 p \leq r$ and $(q-\frac{3}{2})r \leq p+q-1 \leq r$, then $X^r \sim \FR\cap\UI$.  
\end{thm}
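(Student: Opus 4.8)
The plan is to prove the two conclusions separately: that $Y:=X^r\in\UI$ (the hard part, which yields $Y\in\FID$ by Lemma \ref{lem1}), and that $Y\in\FR$ (which follows almost for free from Lemma \ref{lem3}). I would begin by recording the law of $Y$: a change of variables $y=x^r$ shows that $Y$ has density $g(y)=C\,y^{p/r-1}(1-y^{1/r})^{q-1}$ on $(0,1)$, with $g(y)\sim c_0\,y^{p/r-1}$ as $y\to0^+$ and $g(y)\sim c_1\,(1-y)^{q-1}$ as $y\to1^-$. The hypotheses become exponent information: $0<2p\le r$ gives $p/r\in(0,\tfrac12]$, so the singular exponent at $0$ lies in $(-1,-\tfrac12]$, while $q\in[\tfrac32,\tfrac52]$ gives $q-1\in[\tfrac12,\tfrac32]$, so $g$ vanishes at $1$. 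The $\FR$ claim is then immediate: $Y$ has no atom at $0$, and since $p/r-2\le-\tfrac32<-1$ we get $\int_{(0,1)}y^{-1}\,\mu(dy)=\infty$, so condition (ii) of Lemma \ref{lem3} applies and, once $Y\in\FID$ is known, $Y\in\FR$.

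For $Y\in\UI$ I would work directly with the Cauchy transform $G_Y(z)=\int_0^1 g(y)(z-y)^{-1}\,dy$, which by Schwarz reflection across $\R\setminus[0,1]$ (where $Y$ has no mass) continues analytically to $\C\setminus[0,1]$. The goal is to exhibit a domain $\Omega\subset\C\setminus[0,1]$ on which $G_Y$ is univalent and satisfies $G_Y(\Omega)=\C^-$; the inverse $(G_Y|_\Omega)^{-1}\colon\C^-\to\Omega$ is then univalent and, agreeing near $0$ with the right inverse coming from the truncated cone, provides the required univalent continuation of $G_Y^{-1}$ to $\C^-$. Since $\partial\Omega$ must map into $\R\cup\{\infty\}$, it has to lie in the real level set $\mathcal{C}=\{z\in\C\setminus[0,1]:\im G_Y(z)=0\}$, which contains $\R\setminus[0,1]$.

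Concretely the steps are: (a) determine the local behaviour of $G_Y$ at $0$, $1$ and $\infty$ — near $0$ one has $G_Y(z)\sim\kappa\,z^{p/r-1}$, near $1$ the singularity is governed by $(z-1)^{q-1}$, and $G_Y(z)\sim 1/z$ at $\infty$; (b) show that $\mathcal{C}$ consists of $\R\setminus[0,1]$ together with arcs closing up into a single Jordan curve that bounds a simply connected $\Omega$; (c) verify by the argument principle that $G_Y\colon\Omega\to\C^-$ is a bijection, which gives $Y\in\UI$. The endpoint asymptotics in (a) are routine, and the $\FR$ reduction above completes the theorem once (b) and (c) are in hand.

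The main obstacle is (b)--(c): controlling the global geometry of $\{\im G_Y=0\}$ so that it produces exactly one Jordan curve and the boundary argument variation is exactly $2\pi$ (degree one). This is precisely where the quantitative hypotheses enter. The endpoint exponents $p/r\le\tfrac12$ and $q\in[\tfrac32,\tfrac52]$ fix the opening angles of $\Omega$ at the images of $0$ and $1$, while the coupled condition $(q-\tfrac32)r\le p+q-1\le r$ is what forces the requisite monotonicity of $\arg G_Y$ along the boundary and rules out spurious components of $\mathcal{C}$. Estimating $\arg G_Y(x+i0)$ across $(0,1)$ — equivalently the finite Hilbert transform of $g$ relative to $\pi g$ — is the technical heart, and I expect the stated interval conditions on $(p,q,r)$ to be exactly the sharp range in which these argument estimates close.
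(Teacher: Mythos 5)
Your overall skeleton (endpoint asymptotics, an argument-principle/degree-one count over a boundary contour, and free regularity via condition (ii) of Lemma \ref{lem3}) is the right shape, and your $\FR$ reduction is correct and matches the paper: the density of $X^r$ behaves like $c\,y^{p/r-1}$ at $0$ with $p/r\le\frac12$, so $\int_{(0,1)}y^{-1}\,\mu(dy)=\infty$. But the $\UI$ part has a fatal conceptual gap in the very first move. You propose to continue $G_Y$ to $\C\setminus[0,1]$ by Schwarz reflection across $\R\setminus[0,1]$; that continuation is just the integral formula $\widetilde{G}_Y(z)=\int_0^1 g(y)(z-y)^{-1}dy$, for which $\im\widetilde{G}_Y(z)=-\im(z)\int_0^1|z-y|^{-2}g(y)\,dy$. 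Hence your level set $\mathcal{C}=\{z\in\C\setminus[0,1]:\im G_Y(z)=0\}$ is \emph{exactly} $\R\setminus[0,1]$: there are no off-axis arcs to close up into a Jordan curve, your candidate domain degenerates to $\Omega=\C^+$, and $G_Y\colon\C^+\to\C^-$ is injective but not surjective (already for the semicircle law, $G^{-1}(w)=w+1/w$ maps $\C^-$ onto $\C^+\cup(-2,2)\cup\C^-$, so the image of $\C^+$ alone misses a large part of $\C^-$). Steps (b) and (c) therefore cannot be carried out on $\C\setminus[0,1]$.

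The missing idea is that the relevant continuation of $G_Y$ must pass \emph{through} the support $(0,1)$ into $\C^-$, not around it: by Lemma \ref{lem analytic cauchy}, $G_Y(z)=\widetilde{G}_Y(z)-2\pi\I f(z)$ for $z\in\C^-$, where $f$ is the analytic continuation of the density. The domain mapped bijectively onto $\C^-$ then lives in $\C^+\cup(0,1)\cup\C^-$ (minus neighborhoods of $0$ and of the cut $[1,\infty)$), and the contour one applies the argument principle to is the one in the paper's proof (curves $c_1,\dots,c_7$). This also changes where your hypotheses enter: they are not estimates on $\arg G_Y(x+\I0)$ across $(0,1)$ (a finite Hilbert transform), but sign conditions on the continued density on the lower edges of the cuts, namely $\re\bigl(f(x-\I0)\bigr)\le0$ for $x<0$ (which uses $2p\le r$ and $(q-\tfrac32)r\le p+q-1$) and for $x>1$ (which uses $q\in[\tfrac32,\tfrac52]$), ensuring the image curves $g_3,g_5$ stay in $\overline{\C^+}$; the condition $p+q-1\le r$ controls the decay of $f$ at infinity in $\C^-$ so that $g_4$ is small. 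Without replacing your reflection step by this continuation through the support, the proof cannot be completed.
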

Since $\Beta(\frac{1}{2},\frac{3}{2})=\MP(1,\frac{1}{4})$, we conclude that if $X\sim\MP(1,1)$ then $X^r \sim\FR\cap \UI$ for $r\geq1$. We will prove FID property for a larger class of free Poissons.

\begin{thm}\footnote{The published version states that $X^r$ is also FR in both cases \eqref{thm3-1} and \eqref{thm3-2}, but the proof is not correct. If $X \sim \MP(1,1)$ then $X^r \sim \FR$ for $r\geq1$ as mentioned above, but  for other cases whether $X^r \in \FR$ or not is unclear. }\label{thm3} 
Suppose that $X\sim \MP(p,1)$ and $p>0$. 
\begin{enumerate}[\rm(1)]
\item\label{thm3-1} If $p \geq 1$ and $r \in(-\infty,0]\cup[1,\infty)$,  then $X^r \sim \UI$.  
\item\label{thm3-2} If $0<p < 1$ and $r \geq1$,  then $X^r \sim \UI$.  
\end{enumerate}
\end{thm}
We avoided the negative powers for $0<p<1$ since $\MP(p,1)$ has an atom at 0.  

Since $S \sim \WS(0,1)$ implies $S^2 \sim \MP(1,1)$, the following result is immediate.  
\begin{cor}\label{Power WS} 
If $S\sim \WS(0,1)$ and $r \in(-\infty,0]\cup[2,\infty)$ then $|S|^{r} \sim \UI$. If $r\geq2$ we also have $|S|^{r} \sim \FR$.
\end{cor}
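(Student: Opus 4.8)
The plan is to reduce the statement to Theorem \ref{thm3}(1) by identifying $|S|^r$ with a power of the free Poisson variable $S^2$. First I would verify the distributional identity $S^2\sim\MP(1,1)$. If $S\sim\WS(0,1)$ has density $\frac{1}{2\pi}\sqrt{4-x^2}$ on $(-2,2)$, then by symmetry the pushforward of this law under $x\mapsto x^2$ has density $\frac{1}{\sqrt{y}}\cdot\frac{1}{2\pi}\sqrt{4-y}=\frac{1}{2\pi}\frac{\sqrt{4-y}}{\sqrt{y}}$ on $(0,4)$. Comparing with the free Poisson density at $p=1,\theta=1$, for which the endpoints are $\theta(1-\sqrt{p})^2=0$ and $\theta(1+\sqrt{p})^2=4$ and the atom $\max\{1-p,0\}$ at $0$ vanishes, shows that the two densities coincide. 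Hence $S^2\sim\MP(1,1)$.

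Next I would observe that, since $S=0$ almost never, one has $|S|^r=(S^2)^{r/2}$ almost surely for every real $r$; the case $r\le 0$ is legitimate precisely because $\MP(1,1)$ carries no atom at $0$. Thus $|S|^r$ is exactly the $(r/2)$-th power of a random variable distributed as $\MP(1,1)$.

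Finally I would apply Theorem \ref{thm3}(1) with $p=1$ (so that the hypothesis $p\ge 1$ holds) and with the exponent $r/2$ playing the role of the exponent in that theorem. The admissible set $(-\infty,0]\cup[1,\infty)$ for that exponent translates, under the substitution $s=r/2$, precisely into $r\in(-\infty,0]\cup[2,\infty)$: the branch $r/2\le 0$ gives $r\le 0$, and the branch $r/2\ge 1$ gives $r\ge 2$. Theorem \ref{thm3}(1) then yields $(S^2)^{r/2}\sim\FR\cap\UI$, that is, $|S|^r\sim\FR\cap\UI$.

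There is no genuine obstacle here: the entire content is the change-of-variables identification $S^2\sim\MP(1,1)$ together with the bookkeeping of exponents, while the substantive complex-analytic work has already been carried out in the proof of Theorem \ref{thm3}. The only point requiring a moment's care is the boundary case $r=0$, where $|S|^0=\delta_1$; this is covered since $0$ lies in the admissible exponent set and $\delta_1$ is readily seen to belong to $\FR\cap\UI$.
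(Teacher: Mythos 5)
Your proposal is correct and is exactly the paper's route: the paper derives Corollary \ref{Power WS} from Theorem \ref{thm3}(1) via the identity $S^2\sim\MP(1,1)$ and the identification $|S|^r=(S^2)^{r/2}$, which is precisely your argument (you merely spell out the change-of-variables check of the densities and the exponent bookkeeping that the paper calls ``immediate'').
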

\begin{rem} 
The result for $r=4$ was proved in \cite{AHS13}. Recently Chistyakov and Lehner showed the $r=6$ case (in private communication). A related but negative result is that $(S+a)^2\not\sim\FID$ for all $a\neq0$ \cite{Eis12}.  
\end{rem}

We consider the symmetrized versions. 
\begin{thm}\label{Sym Power Beta}
\begin{enumerate}[\rm(1)]
\item Suppose $X\sim \HCM$ satisfying condition \eqref{cond} in Theorem \ref{thm1}. If $W$ is independent of $X$ and has a symmetric distribution, then $W X^r \sim \UIS$ for $|r|\geq1$.  
\item\label{Sym Beta} Suppose that $B,X$ are independent and $B\sim \frac{1}{2}(\delta_{-1}+\delta_1), X\sim \Beta(p,q)$. Under the assumptions on $p,q,r$ in Theorem \ref{thm2}, we have $B X^r \sim \UIS$. 
\item\label{Sym Beta not FID} Suppose that $B,X$ are independent and $B\sim \frac{1}{2}(\delta_{-1}+\delta_1), X\sim \Beta(p,q)$. If either $(i)$ $p>r>0$ or $(ii)$ $r\leq0$ then we have $B X^r \not\sim \FID$. 
\end{enumerate}
\end{thm}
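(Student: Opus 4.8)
The plan is to detect an obstruction on the negative imaginary axis $\I(-\infty,0)$, where the symmetry of the distribution $\nu$ of $BX^r$ forces $G_\nu$ to be purely imaginary. First I would observe that the two hypotheses are jointly equivalent to $p>r$ (since $p>0$ makes $\{r\le0\}$ a subset of $\{r<p\}$), and that this is exactly the condition under which $|x|^{-1}$ is $\nu$-integrable: because $|BX^r|=X^r$ for $X\in(0,1)$,
\[
\int_\R \frac{1}{|x|}\,\nu(dx)=\mathbb{E}[X^{-r}]=\frac{B(p-r,q)}{B(p,q)}<\infty\quad\Longleftrightarrow\quad p>r .
\]
I would also record that $\nu(\{0\})=0$.

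Next I would analyse $\psi(t):=\int_\R \frac{t}{t^2+x^2}\,\nu(dx)$ for $t>0$, which satisfies $G_\nu(it)=-i\psi(t)$ by symmetry of $\nu$. Since $\frac{t}{t^2+x^2}\le \frac{1}{2|x|}\in L^1(\nu)$ and tends to $0$ pointwise off the origin, dominated convergence together with $\nu(\{0\})=0$ gives $\psi(0^+)=0$; the elementary bound $\psi(t)\le 1/t$ gives $\psi(+\infty)=0$; and $\psi>0$ throughout $(0,\infty)$. A continuous positive function vanishing at both ends of $(0,\infty)$ attains an interior maximum, so there is $t_*\in(0,\infty)$ with $\psi'(t_*)=0$, that is, $G_\nu'(it_*)=0$. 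In the degenerate case $r=0$ one has $\nu=\frac{1}{2}(\delta_{-1}+\delta_1)$ and simply $\psi(t)=t/(t^2+1)$, with $t_*=1$.

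Then I would derive the contradiction. Assume $\nu\sim\FID$. By \cite{BV93} the Voiculescu transform extends analytically to all of $\C^+$, so $H(w):=\varphi_\nu(1/w)+1/w$ is single-valued and analytic on $\C^-$ and equals the right inverse $G_\nu^{-1}$ near $w=0$. Writing $w=-is$ with $0<s<\psi(t_*)$, one has $G_\nu^{-1}(-is)=i\tau(s)$, where $\tau$ is the branch of $\psi^{-1}$ with $\tau(0^+)=+\infty$; this branch decreases to $t_*$ as $s\uparrow\psi(t_*)$, and $\tau'(s)=1/\psi'(\tau(s))\to\infty$ because $\psi'(t_*)=0$. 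Consequently $|(G_\nu^{-1})'(w)|=|\tau'(s)|\to\infty$ as $w\to -i\psi(t_*)$ along $\I(-\infty,0)$. But $-i\psi(t_*)$ is an interior point of $\C^-$, where $H=G_\nu^{-1}$ is analytic with finite derivative, a contradiction. Hence $BX^r\not\sim\FID$.

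The hard part will be the middle step, not the final contradiction. One must justify $\psi(0^+)=0$ with care, since this is precisely where the hypothesis $p>r$ enters and separates the present non-FID regime from the FID regime of Theorem \ref{Sym Power Beta}(2): geometrically $\nu$ either has a spectral gap around $0$ (when $r\le0$) or a density vanishing there like $|x|^{p/r-1}$ (when $p>r>0$), whereas a symmetric FID law such as the semicircle distribution keeps $\psi$ monotone with $\psi(0^+)>0$. One must also confirm that the interior maximum of $\psi$ yields a genuine branch point of $G_\nu^{-1}$, that is, that the germ continued along $\I(-\infty,0)$ really develops the derivative blow-up at $-i\psi(t_*)$, rather than a spurious coincidence that the global extension $H$ could absorb.
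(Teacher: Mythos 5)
Your proposal proves only part (3) of the theorem. Parts (1) and (2) --- the positive assertions that $W X^r\sim\UIS$ and $B X^r\sim\UIS$ --- are the main content of the statement and are not touched: nothing in your argument produces the univalent analytic continuation of the inverse Cauchy transform required by conditions (a) and (b) of Definition \ref{UIS}. The paper's proof of these parts is a substantial contour argument in the spirit of Theorem \ref{thm2}: one writes $G_{B X^r}(z)=\frac{1}{2}\left(G_{X^r}(z)-G_{X^r}(-z)\right)$, continues it across $(0,1)$ via Lemma \ref{lem analytic cauchy}, shows that the image of a closed curve hugging the boundary of $\C^-\cup\I\C^-\setminus[1,\infty)$ winds exactly once around each point of $\C^-\cap\I\C^-$ (using the sign conditions $\re(f(x-\I0))\le0$ on $(-\infty,0)$ and on $(1,\infty)$ already established in the proof of Theorem \ref{thm2}, together with the asymptotics $G_{B X^r}(z)= a\cos(\pi p s/2)\,e^{-\I\pi p s/2}z^{p s-1}+o(|z|^{ps-1})$ near $0$), and then verifies condition (a) separately via unimodality of the density and Kaplan's close-to-convexity criterion. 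None of this can be extracted from your obstruction argument, so as a proof of the stated theorem the proposal has a genuine gap.

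For part (3) itself your argument is essentially correct and takes a genuinely different route from the paper. The paper invokes Lemma \ref{lem pdf 0}: under (i) or (ii) the law of $B X^r$ has a continuous density near $0$ that vanishes at $0$, hence $G(\I y)\to0$ as $y\downarrow0$, which contradicts the continuity of the reciprocal Cauchy transform of an FID law into $\C^+\cup\R$. Your route instead locates an interior critical point of $\psi(t)=\I G(\I t)$, forced by $\psi(0^+)=\psi(+\infty)=0$ and $\psi>0$, and derives a blow-up of $(G^{-1})'$ at the interior point $-\I\psi(t_*)$ of $\C^-$, contradicting the analyticity there of $\varphi(1/w)+1/w$. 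This is sound provided you take $t_*$ to be the \emph{largest} critical point of $\psi$ (so that the branch $\tau$ of $\psi^{-1}$ continued from $s=0^+$ genuinely terminates at $t_*$); the ``spurious coincidence'' you worry about is excluded by the identity theorem, since $H(-\I s)$ and $\I\tau(s)$ are both real-analytic on $(0,\psi(t_*))$ and agree near $s=0^+$. Your version needs only the Bercovici--Voiculescu analytic extension of the Voiculescu transform rather than the boundary-continuity result for $1/G_\mu$, which is a mild gain in self-containedness, but it does not repair the omission of parts (1) and (2).
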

The case $(p,q)=(1/2,3/2)$ in \eqref{Sym Beta} and \eqref{Sym Beta not FID} gives us 
\begin{cor}\label{Sym Power WS}
\begin{enumerate}[\rm(1)]
\item If $S\sim \WS(0,1)$ and $r \geq2$ then $|S|^{r}\,\sign(S) \sim \UIS$. 
\item If $S\sim \WS(0,1)$ and $r<1$ then $|S|^r\, \sign(S) \not\sim \FID$.
\end{enumerate}
\end{cor}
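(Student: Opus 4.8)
The plan is to reduce both statements to Theorem \ref{Sym Power Beta} by exhibiting $|S|^r\,\sign(S)$ as a positive dilation of $B X^{r/2}$, where $X\sim\Beta(\tfrac12,\tfrac32)$ and $B\sim\frac12(\delta_{-1}+\delta_1)$ is independent of $X$; the only real content is then a parameter check, since all of the analysis sits inside Theorem \ref{Sym Power Beta}.

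First I would assemble the distributional identities already available in the text. Because $S\sim\WS(0,1)$ is symmetric and has no atom, $|S|$ and $\sign(S)$ are independent and $\sign(S)\sim\frac12(\delta_{-1}+\delta_1)=B$; moreover $S^2\sim\MP(1,1)$ (the density computation noted just before Corollary \ref{Power WS}). Since $\theta$ is the scale parameter of the free Poisson law, $\MP(1,1)$ is the law of $4W$ for $W\sim\MP(1,1/4)$, and $\MP(1,1/4)=\Beta(\tfrac12,\tfrac32)$ as recorded in the text. Hence, with $X\sim\Beta(\tfrac12,\tfrac32)$, the variable $S^2$ has the law of $4X$, and therefore, as identities of laws,
\be
|S|^r=(S^2)^{r/2}=2^{r}\,X^{r/2},\qquad |S|^r\,\sign(S)=2^{r}\,B\,X^{r/2},
\ee
with $B$ independent of $X$. (For $r\leq0$ this is meaningful because neither $S$ nor $X$ charges $0$.) As $2^{r}>0$ and the classes $\UIS$ and $\FID$ are invariant under dilation by a positive constant, it suffices to decide the membership of $B X^{r/2}$.

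For the first part I would apply part \eqref{Sym Beta} of Theorem \ref{Sym Power Beta} with $(p,q)=(\tfrac12,\tfrac32)$ and exponent $s=r/2$. The hypotheses imported from Theorem \ref{thm2} then read $s\geq1$, $q=\tfrac32\in[\tfrac32,\tfrac52]$, $0<2p=1\leq s$, and $(q-\tfrac32)s=0\leq p+q-1=1\leq s$; every one of these collapses to the single requirement $s\geq1$, i.e.\ $r\geq2$. This gives $B X^{r/2}\sim\UIS$, hence $|S|^r\,\sign(S)\sim\UIS$.

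For the second part I would instead invoke part \eqref{Sym Beta not FID} of Theorem \ref{Sym Power Beta} with the same $(p,q)$ and $s=r/2$. There, case $(i)$ ($p>s>0$) becomes $0<r<1$ and case $(ii)$ ($s\leq0$) becomes $r\leq0$, and together they cover exactly $r<1$; this yields $B X^{r/2}\not\sim\FID$, hence $|S|^r\,\sign(S)\not\sim\FID$. There is no genuine obstacle beyond this bookkeeping: all the hard analysis is contained in Theorem \ref{Sym Power Beta}, and the point that requires care is the halving of the exponent in passing from $S$ to $X$ through $|S|^r=2^{r}X^{r/2}$, which turns the beta threshold $s\geq1$ into $r\geq2$ and the non-FID range $s<\tfrac12$ into $r<1$, together with tracking the scale factor $4$ between $\MP(1,1)$ and $\MP(1,1/4)$.
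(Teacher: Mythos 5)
Your proposal is correct and follows exactly the paper's route: the paper derives this corollary by specializing Theorem \ref{Sym Power Beta}(2) and (3) to $(p,q)=(\tfrac12,\tfrac32)$, using $S^2\sim\MP(1,1)$, $\Beta(\tfrac12,\tfrac32)=\MP(1,\tfrac14)$, and dilation invariance, which is precisely your reduction $|S|^r\,\sign(S)\overset{d}{=}2^rBX^{r/2}$. Your parameter bookkeeping (the conditions of Theorem \ref{thm2} collapsing to $s\geq1$, i.e.\ $r\geq2$, and the non-FID cases covering exactly $r<1$) checks out.
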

\begin{rem} 
Corollary \ref{Power WS} and Corollary \ref{Sym Power WS} imply that $S^n$ are FID for all $n\in\N$. In the special case $n=2$, the distribution of $|S|^2\, \sign(S)$ is the symmetrized beta distribution with parameters $1/2,3/2$ and Arizmendi et al.\ already proved that it is FID \cite[Proposition 11]{ABNPA10}.
\end{rem}

{\bf Discussions and further remarks on main theorems.}
(1) In Theorem \ref{Sym Power Beta} only  the symmetric Bernoulli rv $B$ is considered for the mixing of beta rvs, while a general $W$ with a symmetric distribution appears in the HCM case. This Bernoulli distribution cannot be generalized to arbitrary symmetric distributions. Actually if $W$ has a symmetric discrete distribution $\mu_W$ whose support has cardinality $\geq 4$, then $W X$ is not FID for any beta rv $X$. The proof is as follows. The pdf of $W X$ is positive at $c:=\min\{x\in\text{supp}(\mu_W)\mid x>0\}$ but is not real analytic at $c$. This shows that the distribution of $W X$ is not FID by \cite[Proposition 5.1]{BB05}. For a similar reason, one cannot take a (positive) scale mixing of beta or free Poisson rvs in Theorem \ref{thm2} or in Theorem \ref{thm3}.

(2) Related to Theorem \ref{Sym Power Beta}, a natural question is the symmetrization of powers of free Poisson rvs, i.e.\ the law of $B X^r$ where $B,X$ are independent, $B\sim \frac{1}{2}(\delta_{-1}+\delta_1)$ and $X\sim \MP(p,1)$. 
\begin{itemize}
\item If $p=1$ and $r\geq1$ then $B X^r\sim \UIS$ from Corollary \ref{Sym Power WS}(1). 

\item If $p>1$ and $r\in\R$ then $B X^r$ is not FID from Lemma \ref{lem pdf 0} that we will show later. 
\item If $p<1$ and $r\geq1$ then it is not known whether $B X^r$ is FID or not. Actually one can check condition (b) in Definition \ref{UIS} similarly to the proof of Theorem \ref{Sym Power Beta}, but it is not clear how to show condition (a).  
\end{itemize}

(3) In Theorems \ref{thm1}, \ref{thm2}, \ref{Sym Power Beta}, the assumptions on the parameters of beta and HCM distributions  may seem too restrictive. Weakening these assumptions is left to future research.

\subsection{Proofs}
The integral form of the Cauchy transform gives us an analytic function defined outside the support of $\mu$, and we denote it by $\widetilde{G}_\mu(z)$:  
\be
\widetilde{G}_\mu(z) =\int_\R \frac{1}{z-x}\,\mu(dx),\qquad z\in \C \setminus \text{supp}(\mu).   
\ee
In the study of FID distributions, more important is the analytic continuation of the Cauchy transform $G_\mu$ from $\C^+$ into $\C^-$ passing through the support of $\mu$. This analytic continuation is possible when the pdf is real analytic and the explicit formula can be given in terms of $\widetilde{G}_\mu(z)$ and the pdf. We state the result in a slightly general form where complex measures are allowed, but the proof is similar to that of \cite[Proposition 4.1]{Has14}. Note that $G_\sigma, \widetilde{G}_\sigma$ can be defined for complex measures $\sigma$ by linearity. 
\begin{lem}\label{lem analytic cauchy}
Let $I \subset \R$ be an open interval. Suppose that $f$ is analytic in a neighborhood of $I \cup \C^-$ and that $f$ is integrable on $I$ with respect to the Lebesgue measure. We define the complex measure $\sigma(dx):= f(x)1_{I}(x)dx$. Then the Cauchy transform $G_\sigma$ defined on $\C^+$ has analytic continuation to $\C^+ \cup I \cup \C^-$, which we denote by the same symbol $G_\sigma$, and 
\be\label{eq analytic cauchy} 
G_\sigma(z) 
=  \widetilde{G}_\sigma(z) - 2\pi \I f(z), \qquad z \in \C^-. 
\ee
\end{lem}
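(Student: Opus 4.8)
The plan is to recognize \eqref{eq analytic cauchy} as a Sokhotski--Plemelj jump formula and to establish it by a contour-deformation argument, reducing everything to a local statement near a single point of $I$. First I would localize: fixing $x_0\in I$, choose a small open subinterval $I_1\subset I$ containing $x_0$ and split $\sigma=\sigma_1+\sigma_2$ with $\sigma_j(dx)=f(x)1_{I_j}(x)\,dx$, where $I_2=I\setminus I_1$. Since the support of $\sigma_2$ avoids a neighborhood of $x_0$, the function $\widetilde{G}_{\sigma_2}$ is already analytic across that neighborhood and contributes no jump; so it suffices to continue $G_{\sigma_1}$ across $I_1$ and to show that the jump comes entirely from the pole at $x=x_0$. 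This reduction also disposes of the case of an unbounded $I$, since analytic continuation is a local matter.

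For the local piece I would exploit that $f$ is analytic on a full neighborhood $\Omega$ of $I\cup\C^-$. For $z\in\C^+$ near $x_0$ we have $G_{\sigma_1}(z)=\int_{I_1}\frac{f(x)}{z-x}\,dx$, and since the integrand is analytic in $x$ on $\Omega$ with its only singularity at $x=z\in\C^+$, the straight contour $I_1$ can be deformed downward, keeping its endpoints fixed, to a contour $\gamma\subset\Omega\cap\overline{\C^-}$ without changing the value of the integral. The resulting integral $\int_\gamma\frac{f(x)}{z-x}\,dx$ is analytic in $z$ on the whole region lying above $\gamma$, in particular on a neighborhood of $I_1$ that reaches into $\C^-$; this furnishes the analytic continuation of $G_{\sigma_1}$, and hence of $G_\sigma$, across $I_1$.

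It then remains to identify the continuation with the right-hand side of \eqref{eq analytic cauchy}. For $z$ lying strictly between $I_1$ and $\gamma$ (so $z\in\C^-$), the closed contour obtained by following $I_1$ and then $\gamma$ in the opposite direction encircles $z$, and a residue computation using $\operatorname{Res}_{x=z}\frac{f(x)}{z-x}=-f(z)$ gives $\int_{I_1}\frac{f(x)}{z-x}\,dx-\int_\gamma\frac{f(x)}{z-x}\,dx=2\pi\I f(z)$; hence the continuation equals $\widetilde{G}_{\sigma_1}(z)-2\pi\I f(z)$, and adding back $\widetilde{G}_{\sigma_2}$ (which has no jump) yields \eqref{eq analytic cauchy}. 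As an independent consistency check one may instead compare boundary values on $I$ directly: by Sokhotski--Plemelj both $G_\sigma$ from $\C^+$ and $\widetilde{G}_\sigma-2\pi\I f$ from $\C^-$ have boundary value $\mathrm{p.v.}\int_I\frac{f(t)}{x-t}\,dt-\pi\I f(x)$ on $I$, so they glue to one analytic function across $I$ by Morera's theorem. The main technical obstacle is the bookkeeping of orientation and winding number in the deformation, which pins down the sign and the factor $2\pi\I$, together with verifying that the downward deformation of $I_1$ stays inside $\Omega$; the latter is exactly where analyticity of $f$ on a full neighborhood of $I\cup\C^-$, rather than merely on $\C^-$, is indispensable.
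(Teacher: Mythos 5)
Your argument is correct. Note first that the paper does not actually write out a proof of this lemma: it only remarks that ``the proof is similar to that of [Has14, Proposition 4.1]'', and the argument cited there is the boundary-value gluing that you relegate to your ``independent consistency check'' --- one shows via Sokhotski--Plemelj that $G_\sigma$ from $\C^+$ and $\widetilde{G}_\sigma-2\pi\I f$ from $\C^-$ have the same continuous boundary values $\mathrm{p.v.}\int_I\frac{f(t)}{x-t}\,dt-\pi\I f(x)$ on $I$, and then invokes Morera/Painlev\'e to glue them into one analytic function. Your primary route is instead the contour-deformation-plus-residue argument, and it is sound: the localization to $I_1$ is legitimate (for $z$ near $x_0$ the piece $\widetilde{G}_{\sigma_2}$ is analytic across a neighborhood of $x_0$, and integrability of $f$ on $I$ makes $\widetilde{G}_{\sigma_2}$ well defined), the swept region between $I_1$ and $\gamma$ lies in $\overline{\C^-}\cup I_1\subset\Omega$ and avoids the pole when $z\in\C^+$, and the orientation bookkeeping (the closed contour $I_1$ followed by $-\gamma$ winds once clockwise about $z$, giving $\int_{I_1}-\int_\gamma=-2\pi\I\cdot(-f(z))=2\pi\I f(z)$) produces the right sign. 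What your route buys is that the continuation is exhibited directly as a convergent integral $\int_\gamma\frac{f(x)}{z-x}\,dx$, with no need to verify continuity of boundary values or to invoke Morera; what the cited route buys is that it avoids winding-number bookkeeping and localization entirely. Two small points you leave implicit and should state: the identity, established on the thin lens between $I_1$ and $\gamma$, propagates to all of $\C^-$ because both sides are analytic on the connected set $\C^-$ (identity theorem); and the continuations obtained across different points $x_0\in I$ are mutually consistent precisely because each one is identified with the single globally defined function $\widetilde{G}_\sigma-2\pi\I f$ on $\C^-$. Neither is a genuine gap.
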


The following lemma gives a crucial idea for showing the main theorems. The idea of the proof is to take the curve $\gamma$ in \cite[Proposition 2.1]{BH13} to be the one that starts from $-\infty +\I0$, then goes to 0, turns $180^{\circ}$ around 0 and then goes to $-\infty-\I0$. This curve is useful since we can easily compute the boundary value of $G_\mu$ on $(-\infty,0) -\I0$ thanks to formula \eqref{eq analytic cauchy}, so that we can check condition (B) in \cite[Proposition 2.1]{BH13}. 

\begin{lem}\label{lem4} Let $\mu$ be a probability measure on $[0,\infty)$ which has a pdf of the form 
\be\label{form}
f(x)= x^{p-1} g(x), \qquad x>0, 
\ee
where 
\begin{enumerate}[\rm({A}1)]
\item\label{AS1} $0<p\leq \frac{1}{2}$;  
\item\label{AS2} $g$ is the restriction of an analytic function (also denoted by $g$) defined in $\{z\in\C\setminus\{0\}\mid \arg(z) \in(-\pi,\theta_0)\}$ for some $\theta_0\in(0,\pi)$ and the restriction $g|_{\C^-}$ extends to a continuous function on $\C^-\cup(-\infty,0)$; 
\item\label{AS4} $\lim_{z\to0, \arg(z)\in(-\pi,\theta_0)}g(z)>0$; 
\item\label{AS5} $\lim_{|z|\to\infty, \arg(z)\in(-\pi,0)} z^{\gamma} f(z)=0$ for some $\gamma>0$; 
\item\label{AS7} $\re(f(x-\I0)) \leq0$ for $x<0$. 
\end{enumerate} 
Then $\rho \circledast \mu \in \FR \cap \UI$ for all $\rho \in \PM_+$. 
\end{lem}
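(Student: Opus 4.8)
The plan is to prove $\rho\circledast\mu\in\UI$ by means of the univalence criterion \cite[Proposition 2.1]{BH13}, and then to upgrade the inclusion $\UI\subset\FID$ to free regularity via Lemma \ref{lem3}. Before invoking that criterion I would first dispose of the mixing measure $\rho$. Writing $\rho\circledast\mu$ as a scale mixture, its density on $(0,\infty)$ equals $\int_{(0,\infty)}w^{-1}f(x/w)\,\rho(dw)=x^{p-1}\int_{(0,\infty)}w^{-p}g(x/w)\,\rho(dw)$, which is again of the form \eqref{form} with the same $p$ and with $g$ replaced by $\tilde g(z)=\int_{(0,\infty)} w^{-p}g(z/w)\,\rho(dw)$. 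The key observation is that each of (A2)--(A7) is stable under this averaging over positive scales: analyticity and continuity (A2) pass through the integral by dominated convergence, since $z\mapsto g(z/w)$ keeps the argument $\arg(z/w)=\arg(z)$ inside the sector $(-\pi,\theta_0)$; (A4) becomes $g(0^+)\int w^{-p}\rho(dw)>0$; and, most importantly, (A7) survives because the relevant boundary value is the positive average $\int_{(0,\infty)}w^{-1}\re f(x/w-\I0)\,\rho(dw)\le0$ of nonpositive quantities. Hence it suffices to establish the conclusion for a single measure $\mu$ satisfying (A1)--(A7), which carries all the genuinely analytic content.

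For such a $\mu$, Lemma \ref{lem analytic cauchy} applied on $(0,\infty)$ (where $f$ is real-analytic) gives the analytic continuation $G_\mu(z)=\widetilde G_\mu(z)-2\pi\I f(z)$ for $z\in\C^-$, which by (A2) extends continuously up to the lower edge $(-\infty,0)-\I0$. I would then apply \cite[Proposition 2.1]{BH13} with the curve $\gamma$ indicated in the text: it runs in along $(-\infty,0)+\I0$, makes a $180^\circ$ turn around the origin, and runs back out along $(-\infty,0)-\I0$, bounding a region on which $G_\mu$ is analytic. Checking the hypotheses splits into condition (A), the analyticity and boundary regularity of $G_\mu$ on the enclosed domain and along $\gamma$, which follows from the continuation formula together with the control of $G_\mu$ near $0$ from (A4) and near $\infty$ from (A5); and condition (B), that the image $G_\mu(\gamma)$ omits the target half-plane $\C^-$. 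For (B), along the upper edge $x+\I0$ with $x<0$ the point lies outside the support of $\mu$, so $G_\mu(x+\I0)=\widetilde G_\mu(x)\in\R$; along the lower edge the continuation formula gives $\im G_\mu(x-\I0)=-2\pi\re f(x-\I0)\ge0$ by (A7), so $G_\mu(x-\I0)\in\overline{\C^+}$. Granting (B) on all of $\gamma$, the argument principle forces $G_\mu$ to be a bijection from the enclosed domain onto $\C^-$, so the right inverse $G_\mu^{-1}$ is univalent on $\C^-$ and $\mu\in\UI$.

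Free regularity is then immediate from Lemma \ref{lem3}. By (A4) the density behaves like $g(0^+)x^{p-1}$ as $x\downarrow0$, and since $p\le\tfrac12$ by (A1) we get $\int_{(0,1)}x^{-1}\,\mu(dx)=\infty$; moreover an atom at $0$ is contributed whenever $\rho(\{0\})>0$. In either case one of the two alternatives of Lemma \ref{lem3} holds, so $\rho\circledast\mu\in\FR\cap\UI$.

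The step I expect to be the main obstacle is verifying (B) on the small semicircle of $\gamma$ around the origin. There both $\widetilde G_\mu$ and $f$ blow up at the same order $z^{p-1}$ with $p-1\in(-1,-\tfrac12]$, so neither term may be discarded; the task is to show that the combined boundary curve $G_\mu(\gamma)$ stays in $\overline{\C^+}$ as $z$ sweeps through the turn, and this is precisely where the bound $0<p\le\tfrac12$ in (A1) enters, keeping $\arg\bigl(z^{p-1}\bigr)$ within the admissible range. A secondary technical point is the dominated-convergence bookkeeping needed to transfer (A5) to the scale mixture, which may force one to replace the exponent $\gamma$ in (A5) by a smaller one depending on $\rho$.
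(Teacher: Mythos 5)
Your overall strategy --- analytic continuation of $G_\mu$ into $\C^-$ via $G_\mu=\widetilde G_\mu-2\pi\I f$, a winding-number argument along a contour hugging $(-\infty,0]$, and Lemma \ref{lem3} for free regularity --- is the same as the paper's. But there are two genuine gaps.

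First, your reduction of general $\rho$ to the single-measure case by setting $\tilde g(z)=\int_{(0,\infty)}w^{-p}g(z/w)\,\rho(dw)$ does not work for arbitrary $\rho\in\PM_+$. The integral need not converge pointwise (no dominating function is available in general), and even when it does, (A\ref{AS4}) can fail outright: $\int w^{-p}\rho(dw)=\infty$ for many $\rho$ with mass near $0$, which destroys the finite asymptotics $G_\mu(z)=-a(-z)^{p-1}+o(|z|^{p-1})$ on which the analysis at the origin rests; likewise the uniform polynomial decay in (A\ref{AS5}) need not survive mixing. The paper avoids all of this by first taking $\rho$ to be a finite convex combination of point masses at positive locations (so the mixed density is a finite sum and (A1)--(A5), (A7) persist trivially) and then passing to general $\rho$ by weak approximation, using the w-closedness of $\UI$ and $\FR$. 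Your ``dominated-convergence bookkeeping'' remark underestimates this; the approximation step is not optional.

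Second, the version of condition (B) you propose to verify --- that $G_\mu(\gamma)$ omits $\C^-$ --- is false on the small circle around the origin, precisely the piece you flag as the main obstacle. There $G_\mu(\delta e^{\I\theta})\approx -a(-\delta e^{\I\theta})^{p-1}$, whose argument sweeps monotonically from $\pi$ up to $(3-2p)\pi$ as $\theta$ runs from $\pi$ to $-\pi$; in particular the image passes through the lower half-plane. What is actually needed (and what the paper proves) is only that this piece of the image stays at modulus $>\epsilon^{-1}$, so that it cannot affect the winding number around points of the truncated region $D_\epsilon=\{w\in\C^-:\epsilon<|w|<\epsilon^{-1}\}$; one then lets $\epsilon\downarrow0$ and recovers a univalent inverse on all of $\C^-$ by analytic continuation. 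The role of $0<p\le\tfrac12$ is that the terminal argument $(1-2p)\pi$ lands in $(0,\pi)$ so the small-circle image reconnects correctly with the image of the lower edge (the boundary case $p=\tfrac12$ being handled by approximation). As stated, your target on the small circle cannot be reached, so the proof would stall there.
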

\begin{proof} We first assume that $\rho =\delta_1$ and $0<p<1/2$ and later drop these assumptions. 
Lemma \ref{lem analytic cauchy} implies that the Cauchy transform $G_\mu$ has analytic continuation (denoted by $G_\mu$ too) to $\C \setminus (-\infty,0]$ and 
$G_\mu(z)$ is given by formula \eqref{eq analytic cauchy} in $\C^-$.  

In the following $\eta>0$ is supposed to be large and $\delta>0$ is supposed to be small. We consider curves: 
\begin{itemize}
\item $c_1$ is the real line segment from $-\eta+\I0$ to $-\delta+\I0$; 
\item $c_2$ is the clockwise circle $\delta e^{\I\theta}$ where $\theta$ starts from $\pi$ and ends with $-\pi$; 
\item $c_3$ is the line segment from $-\delta-\I0$ to $-\eta -\I0$; 
\item $c_4$ is the counterclockwise circle centered at 0, starting from $-\eta -\I0$ and stopping at $-\eta+\I0$. 
\end{itemize}
Note that the line segments $c_1$ and $c_3$ are meant to be different by considering a Riemannian surface. 
The left of Fig.\ \ref{fig 1} shows the directed closed curve consisting of $c_k, k=1,2,3,4$. 
Let $g_k$ be the image curve $G_\mu(c_k)$ for $k=1,2,3,4$. More precisely, the curves $g_1,g_3$ are defined by $G_\mu(c_1+\I0)$, $G_\mu(c_3-\I0)$ respectively, and hence $g_1$ lies on the negative real line.

\begin{figure}[htpb]
\begin{minipage}{0.5\hsize}
\begin{center}
\includegraphics[width=50mm,clip]{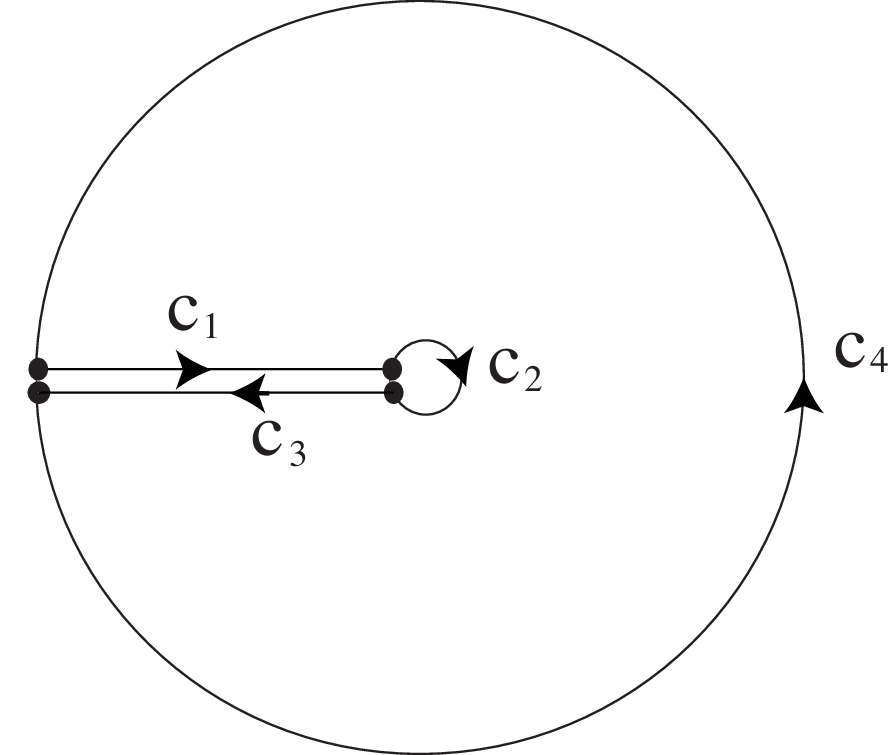}
\end{center}
  \end{minipage}
\begin{minipage}{0.5\hsize}
\begin{center}
\includegraphics[width=50mm,clip]{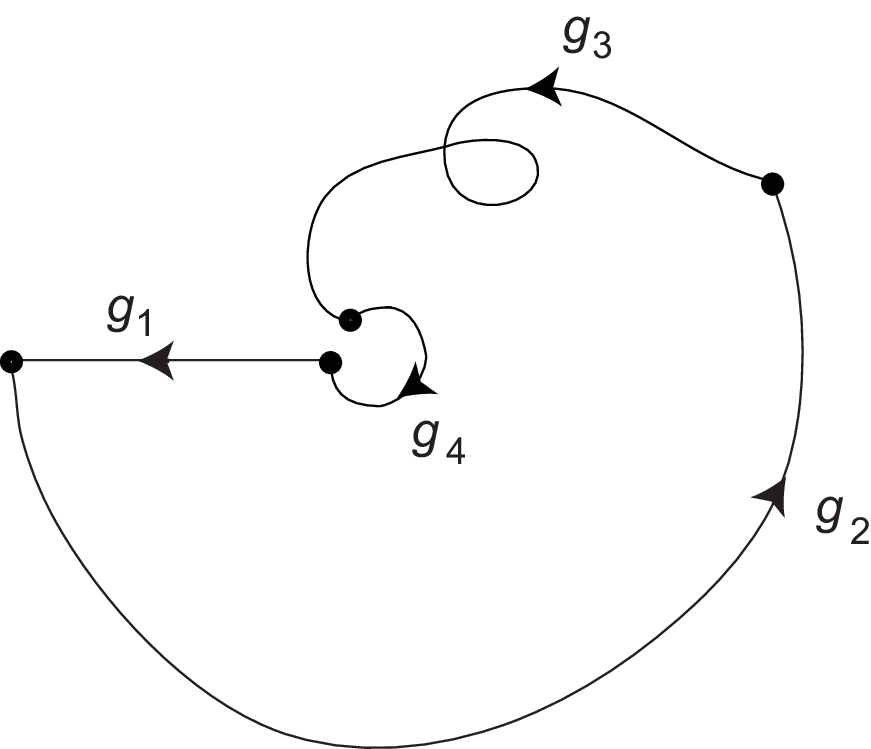}
\end{center}
\end{minipage}
\caption{The curves $c_k$ and $g_k$}\label{fig 1}
\end{figure}

Let $\epsilon>0$ be supposed to be small.  We claim the existence of $\eta>0$ large enough so that $|G_\mu(z)| < \epsilon$ for $|z|\geq\eta$,  $z\in \C \setminus (-\infty,0]$. This can be proved by dividing the region into the two parts $\arg(z)\in(-b,\pi)$ and $\arg(z)\in(-\pi,-b]$ where $b\in(0,\pi/2)$ is arbitrary. First, thanks to assumption (A\ref{AS5}) we may change the contour of the integral and obtain  
\be
G_\mu(z) = \int_{e^{-2\I b}(0,\infty)} \frac{1}{z-w}\,f(w)\,dw,\qquad  \arg(z)\in(-b,\pi).  
\ee
Using assumption (A\ref{AS5}),  we can show that $\sup_{ \theta\in(-b,\pi)}|G_\mu(re^{\I\theta})| \to0$ as $r\to\infty$. For $\arg(z)\in(-\pi,-b]$, we use \eqref{eq analytic cauchy} and assumption (A\ref{AS5}) to obtain $\sup_{\theta\in(-\pi,-b]}|G_\mu(r e^{\I\theta})| \to0$ as $r\to\infty$. This is what we claimed. Therefore, the curve $g_4$ is contained in the ball centered at 0 with radius $\epsilon$. 

From assumption (A\ref{AS7}) it holds that 
\be
\im(G_\mu (z)) =  \im(\widetilde{G}_\mu(z)) - 2\pi\, \im(\I f(z)) \geq 0,\qquad z\in [-\eta,-\delta]-\I0
\ee 
since $ \im(\widetilde{G}_\mu(z))=0$ in $(-\infty,0)-\I0$. Therefore the curve $g_3$ is contained in $\{w \mid \im(w) \geq0\}$. 

Assumptions (A\ref{AS2}),(A\ref{AS4}) allow us to use \cite[(5.6)]{Has14}. The assumptions of \cite[Theorem 5.1]{Has14} are a little different from the present case, but the proof is applicable to our case without a change, to conclude that there exists $a>0$ such that 
\be\label{eq cauchy near 0}
G_\mu(z) = -a (-z)^{p-1} + o(|z|^{p-1}),\qquad \text{as~} z\to0, -\pi <\arg(z) < \pi.
\ee
By asymptotics \eqref{eq cauchy near 0}, we can take $\delta \in(0, \left(a\epsilon/2\right)^{\frac{1}{1-p}})$ small enough so that 
\be
|G_\mu(\delta e^{\I\theta})+a(-\delta e^{\I\theta})^{p-1}| < \frac{a \delta^{p-1}}{2}
\ee
 uniformly on $\theta \in (-\pi,\pi)$. Therefore  $|G_\mu(\delta e^{\I\theta}) |> \frac{1}{2}a\delta^{p-1} > \epsilon^{-1}$, and so the distance between the curve $g_2$ and 0 is larger than $\epsilon^{-1}$. Since we have \eqref{eq cauchy near 0}, if $\delta,\epsilon>0$ are small enough then the final point of $g_2$ has an argument approximately equal to $\arg(-(-(-\delta-\I0))^{p-1})=\arg(-(\delta e^{-2\pi\I})^{p-1})=(1-2 p)\pi$, which is contained in $(0,\pi)$ because we assumed $p<1/2$.  
  
From the above arguments, every point of $D_\epsilon:=\{w \in\C^-\mid \epsilon<|w| < \epsilon^{-1}\}$ is surrounded by the closed curve $g_1 \cup g_2 \cup g_3\cup g_4$ exactly once. Hence we can define a univalent inverse function $G_\mu^{-1}$ in $D_\epsilon$. By analytic continuation, we can define a univalent inverse function $G_\mu^{-1}$ in $\C^-$ by letting $\epsilon\downarrow0$.  When $\epsilon>0$ is small, the bounded domain surrounded by $c_1\cup c_2\cup c_3\cup c_4$ has nonempty intersection with a truncated cone $\Gamma_{\lambda, M}$  where $G_\mu$ is univalent, and $G_\mu(\Gamma_{\lambda,M})$ has intersection with $D_\epsilon$, and hence our $G_\mu^{-1}$ coincides with the original inverse on their common domain.  Thus our $G_\mu^{-1}$ gives the desired analytic continuation, to conclude that $\mu \in\UI$. The case $p=1/2$ follows by approximation. 

Next we take a discrete measure $\rho =\sum_{k=1}^n \lambda_k \delta_{t_k}, \lambda_k,t_k>0, \sum_{k} \lambda_k=1$.  The multiplicative convolution $\rho\circledast \mu$ has the pdf 
\be\label{eq0000}
\begin{split}
f_\rho(x)&:=\sum_{k=1}^n \lambda_k t_k^{-1}f(x/t_k) 
= \sum_{k=1}^n  \lambda_k t_k^{-1} (x/t_k)^{p-1} g(x/t_k) \\
&= x^{p-1} g_\rho(x), 
\end{split}
\ee
where $ g_\rho(x):=\sum_{k=1}^n  \lambda_k t_k^{-p}g(x/t_k).$ The pdf $f_\rho$ satisfies all the conditions (A\ref{AS1})--(A\ref{AS7}) which follow from the conditions for $f$. 
Hence what we proved for $f$ applies to $f_\rho$ without a change, and hence $\rho\circledast \mu \in \UI$. Since $\lim_{x\downarrow0}g_\rho(x)>0$ by (A\ref{AS4}), the pdf $f_\rho$ satisfies $\lim_{x\downarrow0}f_\rho(x)=\infty$, so that $\rho \circledast \mu$ satisfies condition (ii) in Lemma \ref{lem3}, and so $\rho \circledast \mu\in \FR$. 
Finally, by using the w-closedness of $\UI$ and $\FR$ we can approximate a general $\rho$ by discrete measures to get the full result. 
\end{proof}
\begin{rem} 
The curve $g_1$ is a Jordan curve (i.e.\ a curve with no self-intersection) since $G_\mu(x+\I0)$ is decreasing on $(-\infty,0)$. 
Moreover, if $\eta>0$ is large and $\delta>0$ is small then we can show that $g_2,g_4$ are also Jordan curves (with e.g.\ Rouche's theorem). 
However these properties are not needed to prove the theorem. Also, we do not know if $g_3$ has a self-intersection or not, but it does not matter for the proof.  
What we need is only that $g_3$ is contained in $\C^+\cup\R$. 
\end{rem}

The result on powers of HCM rvs is a consequence of the above lemma. 
\begin{proof}[Proof of Theorem \ref{thm1}] 
We further assume that $X$ has a pdf of the form \eqref{pdf HCM} and also $r>1$. Then we define $s:=1/r$. The pdf of $X^r$ is now given by 
\be
f(x) = s C\cdot x^{p s-1} \prod_{k=1}^n (t_k +x^s)^{-\gamma_k},   
\ee 
where
\be\label{ass 1}
0<p \leq \frac{1}{2}, \qquad 0<-p +\sum_{k=1}^n\gamma_k \leq \frac{1}{2}. 
\ee
Then $f$ can be written in the form $x^{p s -1} g(x)$ where 
\be \label{eq0001}
g(x)=s C \prod_{k=1}^n (t_k +x^s)^{-\gamma_k}.
\ee
The pdf $f$ is of the form \eqref{form} in Lemma \ref{lem4} with $p$ now replaced by $p s \in (0,1/2)$, satisfying assumptions (A\ref{AS1})--(A\ref{AS5}). Indeed, assumption (A\ref{AS2}) holds since we can extend the function \eqref{eq0001} analytically by extending $x^s$ to the analytic function $z^s$ defined in a domain $\{r e^{\I \theta }: r >0, \theta \in (-\pi-\delta,\delta)\}$ for some small $\delta>0$ so that $z^s$ never be a negative real in the domain. Such a $\delta>0$ exists by our assumption $s \in(0,1)$. Then (A\ref{AS4}) follows immediately, and (A\ref{AS5}) can be proved by taking $\gamma>0$ such that $\gamma < 1-p s +\sum_{k=1}^n\gamma_k s$. 
 In order to check (A\ref{AS7}) we compute 
\be
f(x-\I0) = - s C |x|^{p s -1}e^{-\I\pi p s} \prod_{k=1}^n (t_k + |x|^se^{-\I \pi s})^{-\gamma_k},\qquad x<0.  
\ee
Since $\arg(t_k + |x|^s e^{-\I\pi s}) \in (-\pi s, 0)$, we have that $\arg(t_k + |x|^s e^{-\I\pi s})^{-\gamma_k} \in (0, \pi \gamma_k s)$ and hence $\arg(-f(x-\I0)) \in (-\pi p s ,-\pi p s  + \sum_{k=1}^n \pi \gamma_k s )$. Then \eqref{ass 1} implies that $\arg(-f(x-\I0)) \in (-\frac{\pi}{2}, \frac{\pi}{2})$ and hence assumption (A\ref{AS7}) holds true. Thus $X^r \sim \FR \cap \UI$, and we can take the limit $s \uparrow1$ to get the result for $s\in(0,1]$. 

The law of $X^{-1}$ has the pdf  
\be
 C\cdot x^{-p -1} \prod_{k=1}^n (t_k +x^{-1})^{-\gamma_k} =  C' \cdot x^{p' -1} \prod_{k=1}^n (t_k^{-1} +x)^{-\gamma_k}, 
\ee 
where $C'=C \prod_k t_k^{-\gamma_k}, p'= -p + \sum_{k} \gamma_k$. Our assumption \eqref{ass 1} guarantees that $p'\in(0,\frac{1}{2}]$ and $-p'  +\sum_k  \gamma_k=p \in (0,\frac{1}{2}]$. Therefore $X^{-1}$ also satisfies \eqref{ass 1} with $p$ replaced by $p'$ and so $X^{-r} \sim \FR \cap \UI$ for $r\geq1$.  

Finally we can approximate a pdf of the form \eqref{sub HCM} by pdfs of the form \eqref{pdf HCM} in the sense of pointwise convergence. By Scheff\'e's lemma, we have the weak convergence of probability measures, and hence the full result follows. 
\end{proof}

We then go to the proof of Theorem \ref{thm2} on powers of beta rvs. The idea is similar to the case of HCM rvs, but we need more elaboration since now we have to study the boundary behavior of the Cauchy transform on $(1,\infty)-\I0$ in addition to $(-\infty,0)-\I0$. 

\begin{proof}[Proof of Theorem \ref{thm2}]
We may further assume that $r>1, p+q-1< r , q\in[\frac{3}{2},2)\cup(2,\frac{5}{2}], 2 p< r$ since the general case can be recovered by approximation.  We define $s:=1/r \in(0,1)$. 
The pdf of $X^r$ is now given by 
\be\label{eqii}
f(x) = \frac{s}{B(p,q)} \cdot x^{p s-1} (1-x^s)^{q-1} =x^{p s-1}g(x),  
\ee 
where $g(x) = \frac{s}{B(p,q)} (1-x^s)^{q-1}$. 

\noindent
{\bf Step 1: Analysis of $G_{X^r}$ around $(-\infty,0)$}. We take the same curves $c_1,c_2,c_3$ depending on $\eta,\delta>0$ as in Lemma \ref{lem4} and the image curves $g_k=G_{X^r}(c_k),k=1,2,3$. The pdf $f$ is of the form \eqref{form} with $p$ replaced by $p s\in(0,1/2)$ and satisfies assumptions (A\ref{AS1}),(A\ref{AS4}),(A\ref{AS5}) in Lemma \ref{lem4}. Notice that (A\ref{AS5}) holds thanks to $(p+q-1)s < 1$. We change (A\ref{AS2}) to the condition that $f$ analytically continues to $\C^-\cup(0,1)\cup \C^+$ and $f|_{\C^-}$ extends to a continuous function on $\C^-\cup(-\infty,0)$. The analytic continuation is given by just replacing $x$ with $z$ in \eqref{eqii}.

In order to show (A\ref{AS7}) in Lemma \ref{lem4}, we compute
\be
f(x-\I0) = -\frac{s}{B(p,q)} \cdot |x|^{ps-1}e^{-\I\pi p s} (1-|x|^se^{-\I \pi s})^{q-1},\qquad x<0. 
\ee 
Since $\arg(1 - |x|^s e^{-\I\pi s}) \in (0,\pi-\pi s)$, we have that $\arg(-f(x-\I0)) \in (-\pi p s,\pi(1-p- q)s + \pi(q-1))$. Our assumptions $0< p s < \frac{1}{2}, q-\frac{3}{2} \leq s(p+q-1)$ imply that $\arg(-f(x-\I0)) \in (-\frac{\pi}{2}, \frac{\pi}{2})$ and hence assumption (A\ref{AS7}) holds true. So the curve $g_3$ lies on $\C^+\cup\R$. 

The asymptotics \eqref{eq cauchy near 0} holds in the present case too (the constant $a$ may change and $p$ is replaced by $p s$), and hence for each $\epsilon>0$ there exists $\delta>0$ small enough so that 
the distance between $g_2$ and 0 is larger than $\epsilon^{-1}$. The maximal argument of the curve $g_2$ is in $(0,\pi)$ as discussed in the proof of Lemma \ref{lem4}.

The above arguments and the proof of Lemma \ref{lem4} show that the curves $g_k,k=1,2,3$ typically behave as in Fig.\ \ref{fig 2}.
\begin{figure}[htpb]
\begin{minipage}{0.5\hsize}
\begin{center}
\includegraphics[width=50mm,clip]{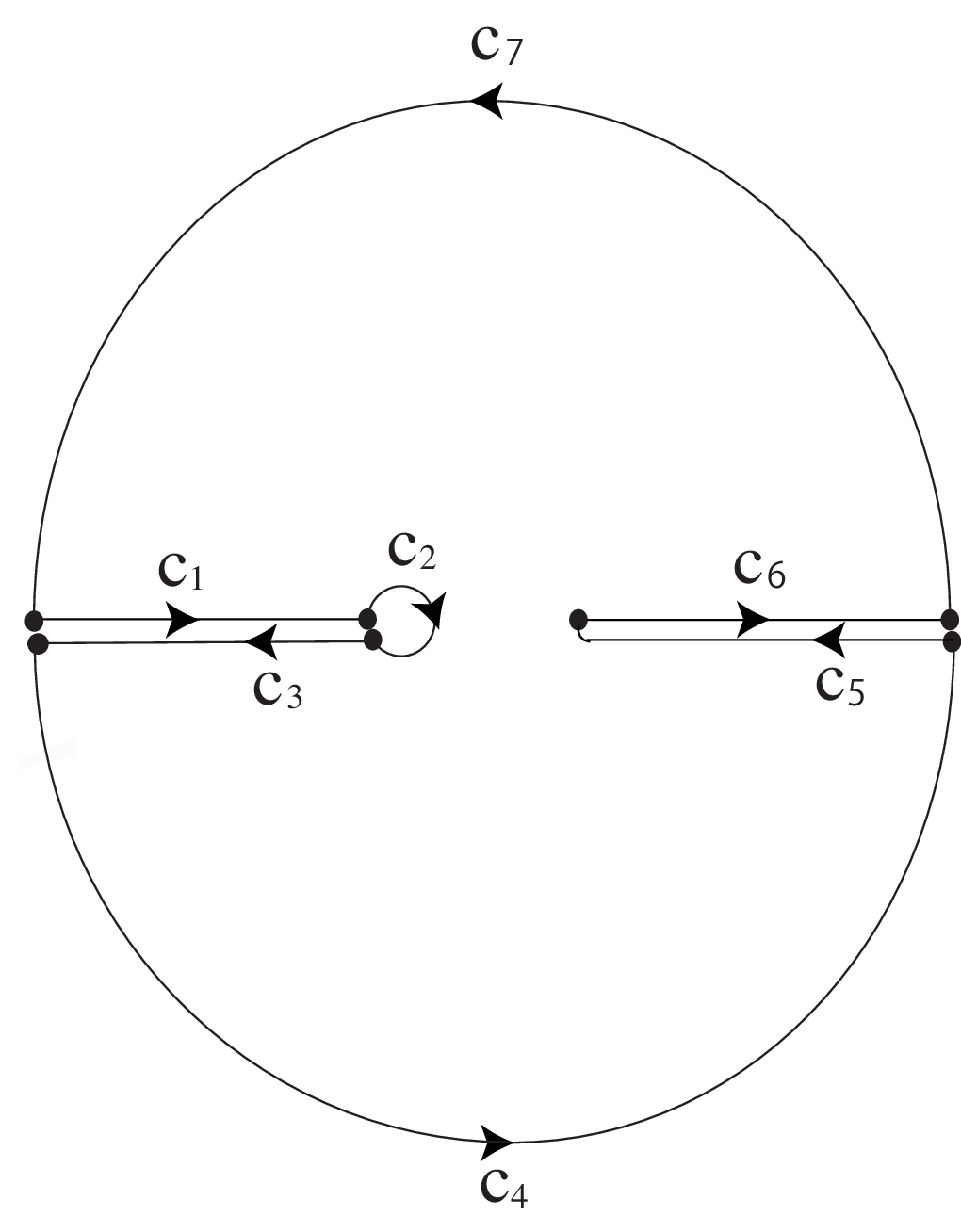}
\end{center}
  \end{minipage}
\begin{minipage}{0.5\hsize}
\begin{center}
\includegraphics[width=50mm,clip]{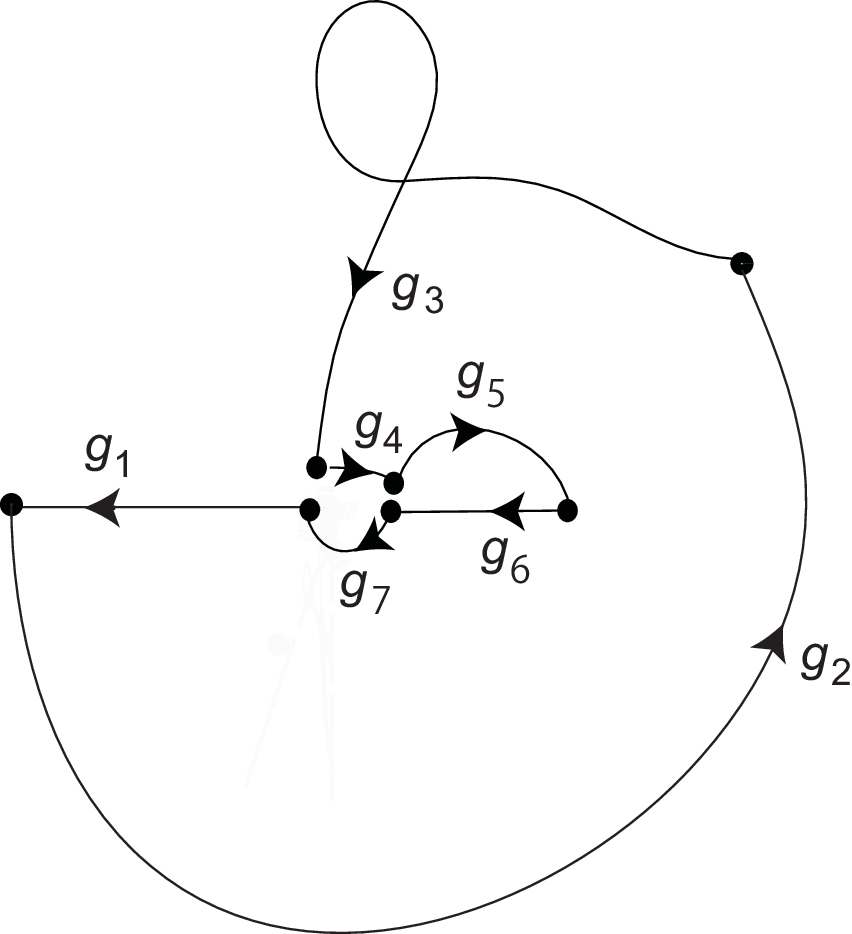}
\end{center}
\end{minipage}
\caption{The curves $c_k$ and $g_k$}\label{fig 2}
\end{figure}

\noindent
{\bf Step 2: Analysis of $G_{X^r}$ around $[1,\infty)$ and around $\infty$}. We show the following properties: 
\begin{enumerate}[\rm(i)]
\item\label{ASii} The restriction $f|_{\C^-}$ extends to a continuous function on $\C^- \cup \R \setminus\{0\}$; 
\item\label{ASi}  The analytic function $G:=G_{X^r}$ (defined in $\C^+\cup(0,1)\cup\C^-$ via Lemma \ref{lem analytic cauchy}) extends to a continuous function on $\C^+\cup(0,1)\cup\C^- \cup ([1,\infty)+\I0)\cup ([1,\infty)-\I0)$; 
\item\label{ASiii} $\re(f(x-\I0)) \leq0$ for $x>1$. 
\end{enumerate}
Note that in \eqref{ASi} we understand that $[1,\infty)+\I0, [1,\infty)-\I0$ are different half-lines by cutting the plane along $[1,\infty)$ and going to the Riemannian surface. This property implies that the map $(r,\theta)\mapsto G(re^{\I\theta})$ defined in $(0,1)\times(0,2\pi)$ extends to a continuous map on $[0,1)\times [0,2\pi]$, and in particular that $G(1+\I0)=G(1-\I0)$.

Property \eqref{ASii} is easy to show. 

Property \eqref{ASi} follows from 
the second asymptotics in \cite[(5.6)]{Has14}: there exists $b \in \R$ such that
\be
G(z) = b + o(1),\qquad z\to 1, z\in \C\setminus[1,\infty). 
\ee
The number $b$ is positive since it equals $\lim_{x\downarrow1} G(x+\I0)>0$. 
Note that the proof of this asymptotics required $1<q<2$ (in \cite{Has14} $q$ is denoted by $\alpha$), but we can give a proof for $2<q<3$ too only by using the identity $w^{\alpha-2}= (w-z)w^{\alpha-3} + z w^{\alpha-3}$ in \cite[(5.13)]{Has14}. For $q=2$ a logarithm term  appears and so we avoid such a case for simplicity.  The continuity on $(1,\infty)-\I0$ follows from formula \eqref{eq analytic cauchy} and property \eqref{ASii}.

Property \eqref{ASiii} follows from the computation
\be
f(x-\I0) = \frac{s}{B(p,q)} \cdot x^{ps-1} (x^s -1)^{q-1} e^{\I \pi (q-1)},\qquad x>1
\ee 
and our assumption $q \in [\frac{3}{2},\frac{5}{2}]$.

We moreover define $c_4,\dots,c_7$ and the corresponding images $g_k=G(c_k)$ for $k=4,\dots, 7$: 
\begin{itemize}
\item $c_4$ is a counterclockwise curve which lies on $\{z\in\C^-\mid |z|\geq\eta\}$, starting from $-\eta -\I0$ (the final point of $c_3$) and ends at $1+\eta-\I0$; 
\item $c_5$ is the line segment from $1+\eta -\I0$ to $1-\I0$; 
\item $c_6$ is the line segment from $1+\I0$ to $1+\eta+\I0$; 
\item $c_7$ is a counterclockwise curve which lies on $\{z \in\C^+\mid |z|\geq\eta\}$, starting from $1+\eta+\I0$ and ending at $-\eta+\I0$. 
\end{itemize}
Thanks to property \eqref{ASi},  $g_5 \cup g_6$ is a continuous curve, so one need not take a small circle to avoid the point 1. 

We can take a large $\eta>0$ similarly to Lemma \ref{lem4} so that 
the curves $g_4, g_7$ lie on the ball $\{z\in\C\mid |z|<\epsilon\}$ as shown in the right figure. 
This is easy to prove for $g_7$ since the measure is compactly supported and so $G(z)=O(1/z)~(z\to\infty, z\in\C^+)$. For $g_4$ we need Lemma \ref{lem analytic cauchy} and our assumption $s(p+q-1)<1$. 
Thanks to property \eqref{ASiii}, we have that 
\be
\im(G(z)) =  \im(\widetilde{G}(z)) - 2\pi\, \im(\I f(z)) \geq0
\ee
on $c_5$, so the curve $g_5$ is on $\C^+\cup\R$.

From the above arguments, every point of $D_\epsilon=\{w \in\C^-\mid  \epsilon< |w| < \epsilon^{-1}\}$ is surrounded by the closed curve $g_1 \cup\cdots  \cup g_7$ exactly once. So we can define a univalent inverse $G^{-1}$ in $\C^-$ as discussed in Lemma \ref{lem4}, to conclude that $X^r \sim\UI$. Approximation shows that the result is true for $r=1$ and $q=2$ and for the case $p+q-1=r$ too. From Lemma \ref{lem3} the law of $X^r$ is free regular. 
\end{proof}
\begin{rem} The curves $g_1$ and $g_6$ are Jordan curves. 
If $\eta>0$ is large and $\delta>0$ is small then we can moreover show that $g_2,g_4,g_7$ are also Jordan curves (with e.g.\ Rouche's theorem). However these properties are not needed to prove the theorem. 
\end{rem}

We will prove the results for powers of free Poisson rvs. The idea is again similar to the previous proofs. A new phenomenon is that the Cauchy transform has a singularity when we take the limit $z\to0, z\in\C^-$, and we will see how the previous methods are modified. 

\begin{proof}[Proof of Theorem \ref{thm3}]
Since approximation is allowed, we only consider parameters $(p,r)$ from an open dense subset of the full set.   

Case (a): $p>1,r>1$.  

(a-UI) We will first show that $X^r\sim \UI$ when $X\sim \MP(p,1)$. For simplicity we define $G(z)=G_{X^r}(z)$, $s=1/r$, $a = |\sqrt{p}-1|^{2/s}>0$ and $b=(\sqrt{p}+1)^{2/s}>a$. Then the pdf of $X^r$ is equal to 
\be\label{pdf21} 
f(x) = \frac{s }{2\pi} \cdot \frac{\sqrt{(b^s-x^s)(x^s-a^s)}}{x} \,1_{(a,b)}(x). 
\ee 
This density extends to an analytic function in $\C^+\cup(a,b)\cup \C^-$ by replacing $x$ in \eqref{pdf21} with a complex variable $z$. The function $z^s$ is the principal value. One can take the square root also as the principal value, but it may not be obvious. To justify this claim, we use the identity 
\be\label{identity}
(b^s-z^s)(z^s-a^s)=-\left(z^s-\frac{a^s+b^s}{2}\right)^2 +\left(\frac{b^s-a^s}{2}\right)^2.
\ee
 If $z \in\C^\pm$ then $z^s-\frac{a^s+b^s}{2} \in\C^\pm$, and hence $-(z^s-\frac{a^s+b^s}{2})^2 \in \C\setminus(-\infty,0]$, and hence by adding the positive real number $(\frac{b^s-a^s}{2})^2$, we conclude that $(b^s-z^s)(z^s-a^s) \in \C\setminus(-\infty,0]$. So the principal value is relevant for defining the square root.

The Cauchy transform $G$ extends to $\C^+\cup (a,b)\cup \C^-$ analytically via Lemma \ref{lem analytic cauchy}. The function $f|_{\C^-}$ extends to a continuous function on $\C^-\cup \R \setminus\{0\}$ since $(b^s-z^s)(z^s-a^s)$ continuously extends to $\C^-\cup \R$ without taking the value $0$ except at $a,b$. 

Curves $c_k, g_k=G(c_k)$ that we use in the proof are shown in Fig.\ \ref{fig 3}.

\begin{figure}[htpb]
\begin{minipage}{0.5\hsize}
\begin{center}
\includegraphics[width=50mm,clip]{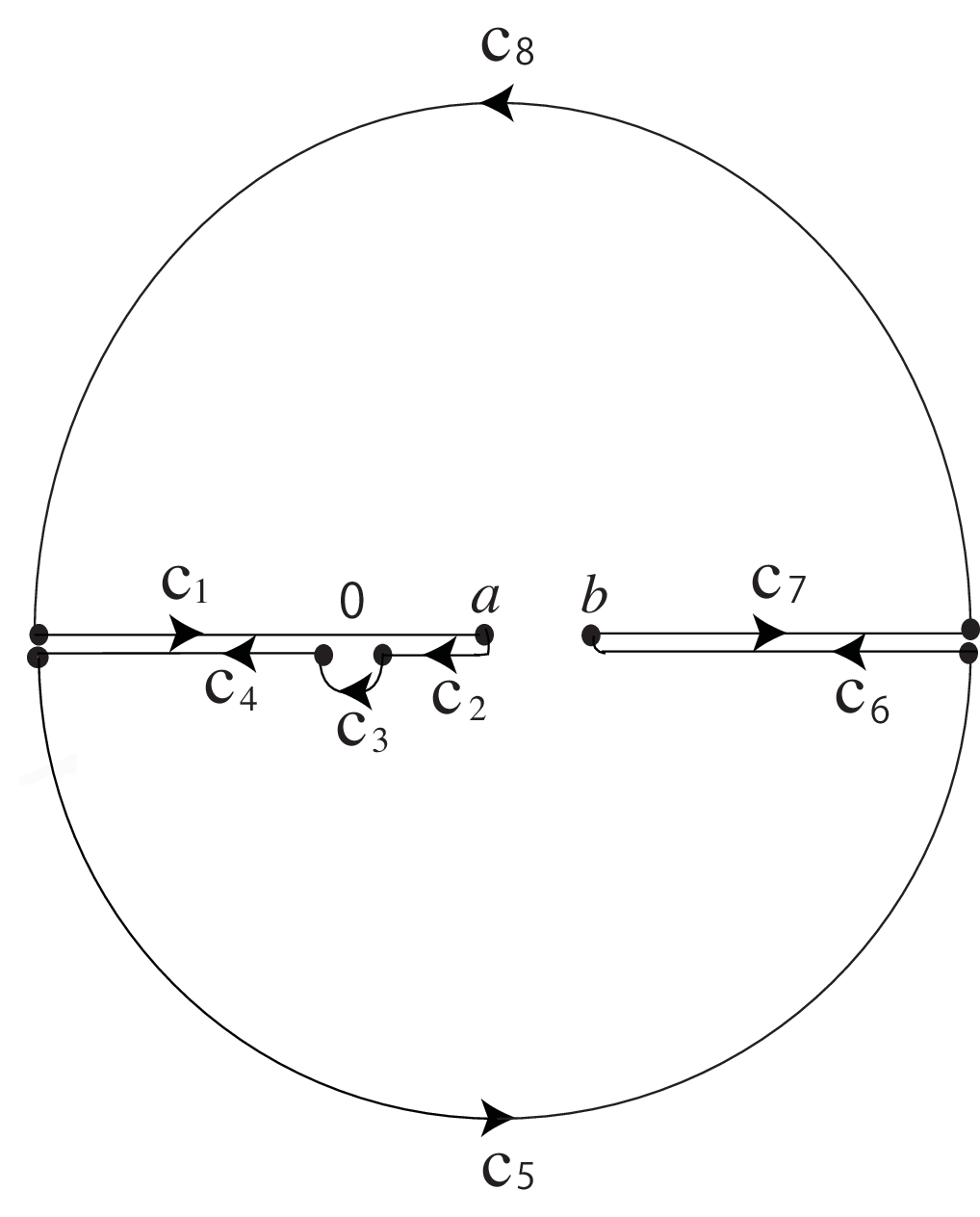}
\end{center}
  \end{minipage}
\begin{minipage}{0.5\hsize}
\begin{center}
\includegraphics[width=60mm,clip]{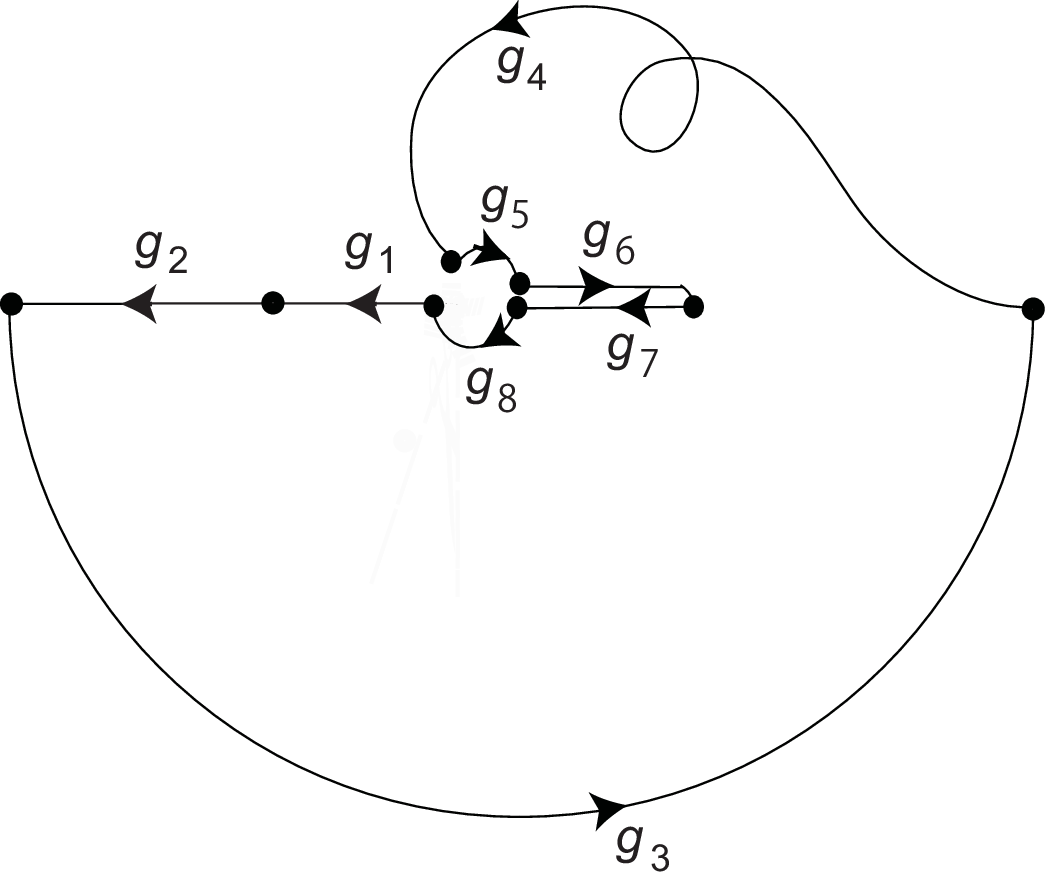}
\end{center}
\end{minipage}
\caption{The curves $c_k$ and $g_k$ $(p>1,r>1)$}\label{fig 3}
\end{figure}

We compute the boundary value $f(x-\I0)$ for $x<a, x>b$, which is the most crucial part of the proof. For $0<x<a$, since $(x-\I\delta)^s-a^s$ approaches the point $-(a^s- x^s) <0$ from $\C^-$ as $\delta \downarrow0$, we should understand that $(x-\I0)^s-a^s = (a^s-x^s) e^{-\I\pi +\I0}$. Hence 
\be\label{eq G2}
f(x-\I0) = \frac{s}{2\pi}\cdot \frac{e^{-\frac{\I\pi }{2}}\sqrt{(b^s-x^s)(a^s-x^s)} }{x},\qquad 0<x<a,  
\ee
and hence $\re(f(x-\I0))=0$. So the curve $g_2$ is on the (negative) real line.  

For $x>b$, since $b^s-(x-\I\delta)^s$ approaches a point in $(-\infty,0)$ from $\C^+$, we have to understand that $b^s-(x-\I0)^s = (x^s -b^s)e^{\I\pi-\I0}$. Hence 
\be\label{eq G3}
f(x-\I0) = \frac{s}{2\pi}\cdot \frac{e^{\frac{\I\pi }{2}}\sqrt{(x^s-b^s)(x^s-a^s)} }{x},\qquad x>b,   
\ee
and hence $\re(f(x-\I0))=0$. So the curve $g_6$ is on the (positive) real line. 

As we saw in \eqref{identity}, $\arg((b^s-z^s)(z^s-a^s))\in(-\pi,\pi)$, and hence $\sqrt{(b^s-z^s)(z^s-a^s)}\in \{w \mid \re(w)>0\}$. Dividing it by $z$ and taking the limit $z\to x-\I0=|x|e^{-\I\pi}$, we have 
\be\label{eq G4}
\re(f(x-\I0)) \leq 0,\qquad x<0  
\ee 
and so $g_4$ lies on $\C^+\cup\R$. 
The remaining proof is similar to Theorems \ref{thm1}, \ref{thm2}, so we only mention important remarks. The proof of \cite[Theorem 5.1(5.6)]{Has14} for $\alpha=\frac{3}{2}, x_0=a$ and $x_0=b$ (with reflection) enables us to show that the Cauchy transform $G$ has a continuous extension to 
\be
\C^+ \cup [a,b] \cup \C^- \cup((-\infty,a]\cup [b,\infty)+\I0) \cup ((-\infty,a] \cup [b,\infty)-\I0) \setminus\{0-\I0\}, 
\ee
similarly to property \eqref{ASi} in the proof of Theorem \ref{thm2}. 
This implies that $g_1\cup g_2$ is a continuous curve and so is $g_6\cup g_7$. The Cauchy transform has a singularity at $0$ when approaching from $\C^-$. This singularity is a contribution of $x^{-1}$ of (the analytic continuation of) the pdf $f$. So we avoid 0 via the curve $c_3$. When one draws the picture of $g_3$, one should take it into account that $G(z) = -2\pi \I f(z) +o(|z|^{-1}) = -\frac{s(p-1)}{z}(1+o(1)),$ $z\to0, z\in\C^-$ since $\widetilde{G}(z)$ is analytic around $0$ and $f(z) =\frac{s}{2\pi}\cdot \frac{-\I \sqrt{a^s b^s}}{z}(1+o(1))$. So the curve $g_3$ is as shown in Fig.\ \ref{fig 3}.  Thus one can show that $X^r \sim \UI$.\footnote{In the published version there was case (a-FR), where it is stated that "We can see that ... for any $t\in (0,a)$ there exists $\delta_2>0$ such that  $G(a-t-\I y) \in\C^-$ for $y\in(0,\delta_2)$ thanks to \eqref{eq G2}", which is not clear. The statement may be true, but at least the argument is not satisfactory or suitable. As a consequence it is not clear whether $X^r$ is FR or not.}


Case (b): $p<1, r>1$. We follow the notations in case (a).  Since now the distribution of $X^r$ has an atom at 0 and hence we can apply Lemma \ref{lem3}, it suffices to show $X^r\sim \UI$. The probability distribution of $X^r$ is of the form $(1-p)\delta_0+f(x)\,dx$, where $f$ is given by the same formula \eqref{pdf21}. The functions $f$ and $G$ have analytic continuation as discussed in case (a). 

We will take curves $c_k$ as shown in Fig.\ \ref{fig 4}. 
\begin{figure}[htpb]
\begin{minipage}{0.5\hsize}
\begin{center}
\includegraphics[width=50mm,clip]{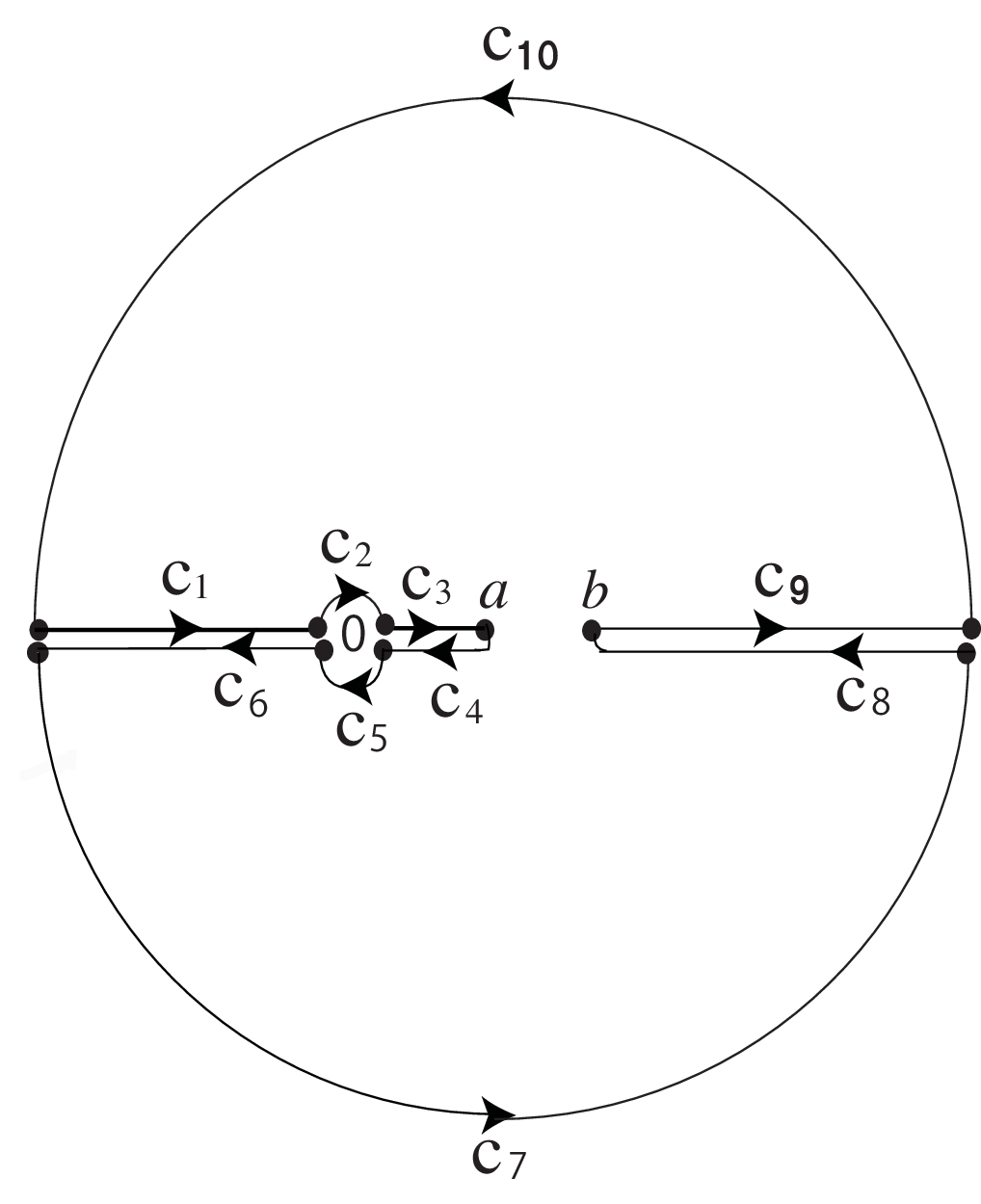}
\end{center}
  \end{minipage}
\begin{minipage}{0.5\hsize}
\begin{center}
\includegraphics[width=60mm,clip]{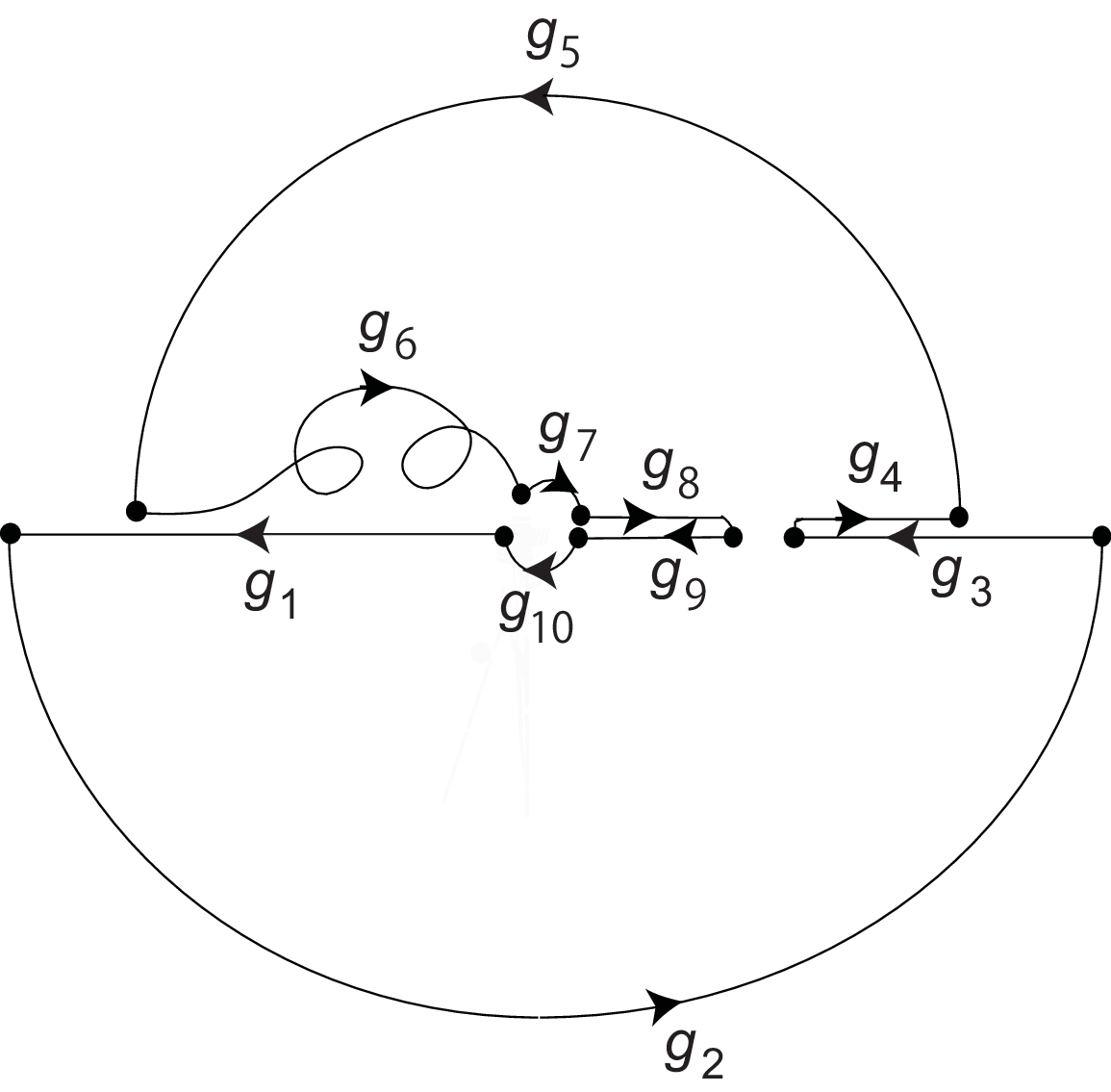}
\end{center}
\end{minipage}
\caption{The curves $c_k$ and $g_k$ $(p<1, r>1)$}\label{fig 4}
\end{figure}
Now the difference from case (a) is that $\widetilde{G}$ has a pole at $0$, and so we avoid $0\pm\I0$ via the curves $c_2,c_5$. 
Note that $G(z)=\frac{1-p}{z}(1+o(1))$ as $z\to0, z\in\C^+$ and $G(z)= \widetilde{G}(z) -2\pi\I f(z) = \frac{(1-p)(1-s)}{z}(1+o(1))$ as $z\to0, z\in\C^-$, so the curves $g_2, g_5$ look like large semicircles. Thus we can prove the claim $X^r\sim\UI$.

Case (c{}): $p>1, r<-1$. For simplicity we use the same notation $G(z)=G_{X^{r}}(z)$, but now we define 
$t=-1/r \in(0,1)$, $A = (\sqrt{p}+1)^{-2/t}>0$ and $B=(\sqrt{p}-1)^{-2/t}>A$. Then the pdf of $X^r$ is equal to 
\be\label{pdf22} 
h(x) = \frac{t (p-1)}{2\pi} \cdot \frac{\sqrt{(B^t-x^t)(x^t-A^t)}}{x^{t+1}} \,1_{(A,B)}(x). 
\ee 
The functions $h, G$ continue analytically to  $\C^+\cup(A,B)\cup \C^-$ and $h|_{\C^-}$ extends to a continuous function on $\C^-\cup\R\setminus\{0\}$ as we discussed in case (a). 

We will take the same curves $c_k$ as in case (a) except that the points $a,b$ are replaced by $A,B$ respectively (see Fig.\ \ref{fig 5}). 
\begin{figure}[htpb]
\begin{minipage}{0.5\hsize}
\begin{center}
\includegraphics[width=50mm,clip]{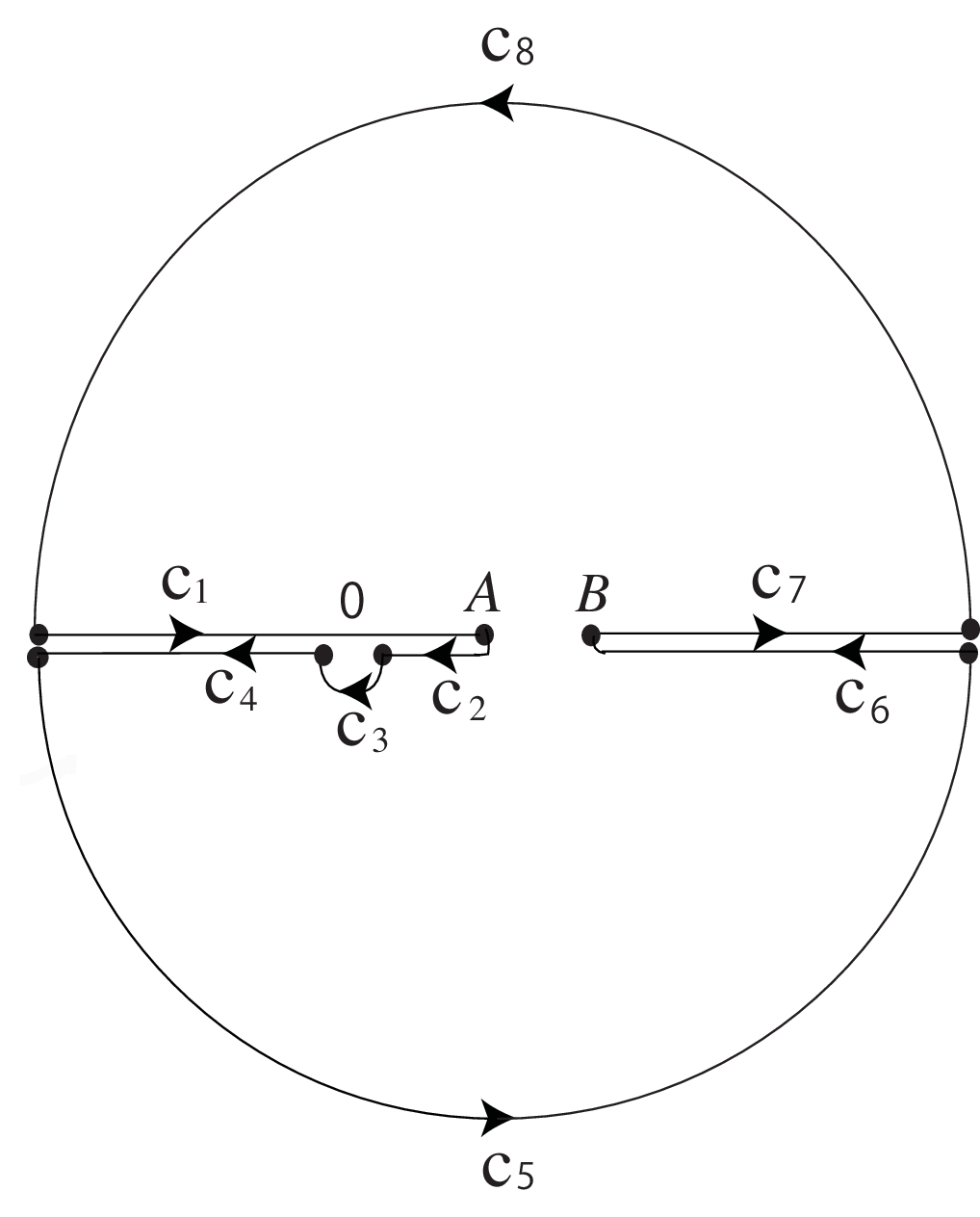}
\end{center}
  \end{minipage}
\begin{minipage}{0.5\hsize}
\begin{center}
\includegraphics[width=55mm,clip]{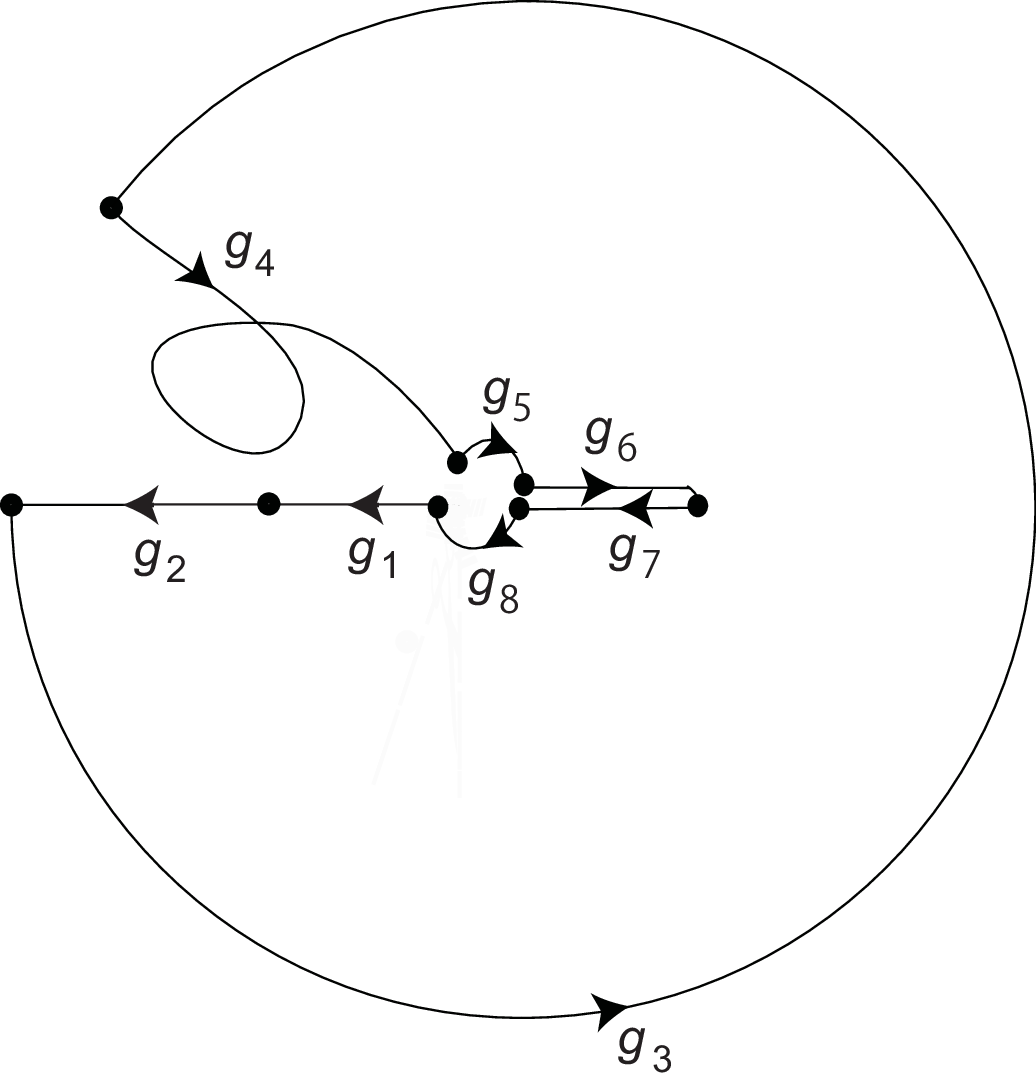}
\end{center}
\end{minipage}
\caption{The curves $c_k$ and $g_k$ $(p>1,r<-1)$}\label{fig 5}
\end{figure}

In the present case, the difficulty is to show the inequality corresponding to \eqref{eq G4}, i.e.\ $\re(h(x-\I0)) \leq 0$ for $x<0$. The inequality $\re(h(x-\I0)) \leq 0$ (actually equality holds) for $0<x<a$ and $x>b$ is easy because only the new factor $(p-1) x^{-t} >0$ is different from  \eqref{eq G2},\eqref{eq G3}.  

We show that $\re(h(x-\I0)) \leq 0$ for $x<0$ by considering two further sub cases. 

Case (ci): $0<t \leq\frac{1}{2}$. We will use the identity \eqref{identity} with $(a, b)$ replaced by $(A,B)$. First we can easily show that $\arg_{(-2\pi,0)}(((x-\I0)^t-(A^t+B^t)/2)^2)\in(-2\pi, -2\pi t)$ for $x<0$. Then we multiply it by $-1$ and shift it by $((B^t-A^t)/2)^2$ and so $\arg((B^t-(x-\I0)^t)((x-\I0)^t-A^t))\in(-\pi, (1-2 t)\pi)$. 
Then we take the square root to get $\arg(\sqrt{(B^t-(x-\I0)^t)((x-\I0)^t-A^t)})\in(-\frac{\pi}{2}, (\frac{1}{2}-t)\pi)$. Further we divide it by $(x-\I0)^{t+1} =|x|^{t+1}e^{-\I\pi(t+1)}$, to get $\re(h(x-\I0)) \leq 0$ for $x<0$. 

Case (cii): $\frac{1}{2}<t<1$. We can show that the curve $\gamma: ((x-\I0)^t-(A^t+B^t)/2)^2, x<0$ is on the right side of the half-line $L: \left((A^t+B^t)/2\right)^2 + r e^{\I(2\pi-2\pi t)}, r>0$, and then $\arg((B^t-(x-\I0)^t)((x-\I0)^t-A^t) )\in(-\pi, (1-2 t)\pi)$ (see Remark \ref{detail} for more details). The remaining arguments are the same as case (ci). 

Thus we have now $\re(h(x-\I0)) \leq 0$ for $x<0$ and hence $g_4$ lies on $\C^+\cup\R$. The remaining proof is almost the same as in case (a), but there are two remarks. One is that as $z\to0, z\in \C^-$, the Cauchy transform $G(z)=\widetilde{G}(z)-2\pi\I h(z)$ has asymptotics $G(z) = -\frac{t}{z^{t+1}}(1+o(1))$ which is different from case (a) by the factor $z^{-t}$ and a constant. Therefore the arguments of $g_3$ change from around $-\pi$ to $t\pi$ as in Fig.\ \ref{fig 5}. The other remark is that as $z\to\infty, z\in \C^-$, the asymptotics of the Cauchy transform is $G(z)=\widetilde{G}(z)-2\pi\I h(z) = (1-t(p-1))\cdot z^{-1}(1+o(1))$, so $g_5$ is near 0. Thus we are able to show that $X^r \sim \UI$. The proof of $X^r \sim\FR$ is similar to case (a).

Case (d): $p>1, r\in(-1,0)$. The pdf is the same as \eqref{pdf22}. We follow the notations in case ({}c), and so now we have $t>1$. The function $h$ now analytically continues to $\{z\in\C\setminus\{0\}\mid \arg(z)\in(-\pi/t,0) \cup(0,\pi/t)\}\cup(A,B)$ and so $G$ extends to $\C^+\cup(A,B)\cup\{z\in\C\setminus\{0\}\mid \arg(z)\in(-\pi/t,0)\}$. Moreover $h$ extends to a continuous function on $\{z\in\C\setminus\{0\}\mid \arg(z)\in[-\pi/t,0]\}$. The curve $c_4$ in case (c{}) is not useful now, and instead we take $c_4$ to be a long line segment with angle $-\pi/t$. 
So we take curves as shown in Fig.\ \ref{fig 6}. 
\begin{figure}[htpb]
\begin{minipage}{0.5\hsize}
\begin{center}
\includegraphics[width=50mm,clip]{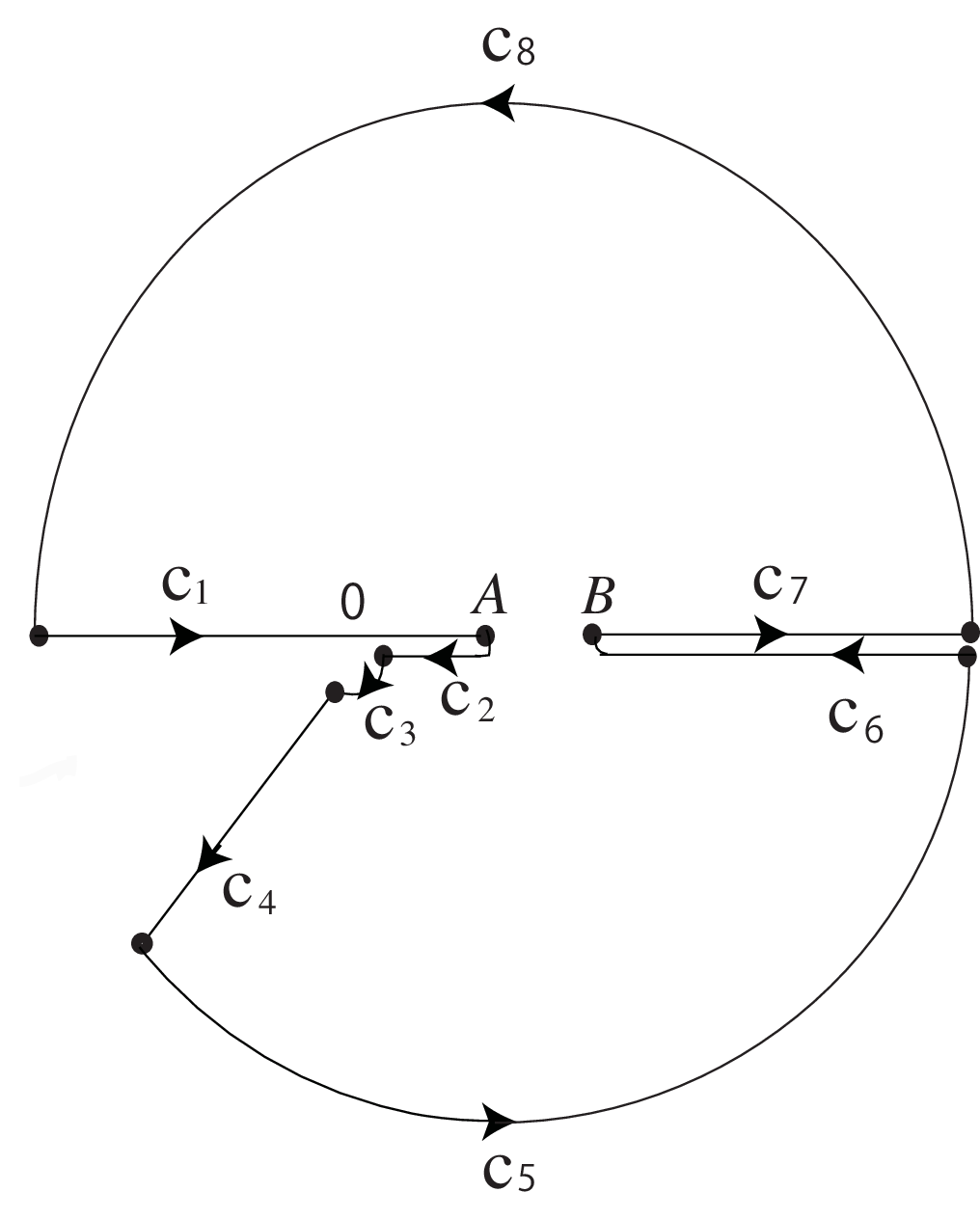}
\end{center}
  \end{minipage}
\begin{minipage}{0.5\hsize}
\begin{center}
\includegraphics[width=55mm,clip]{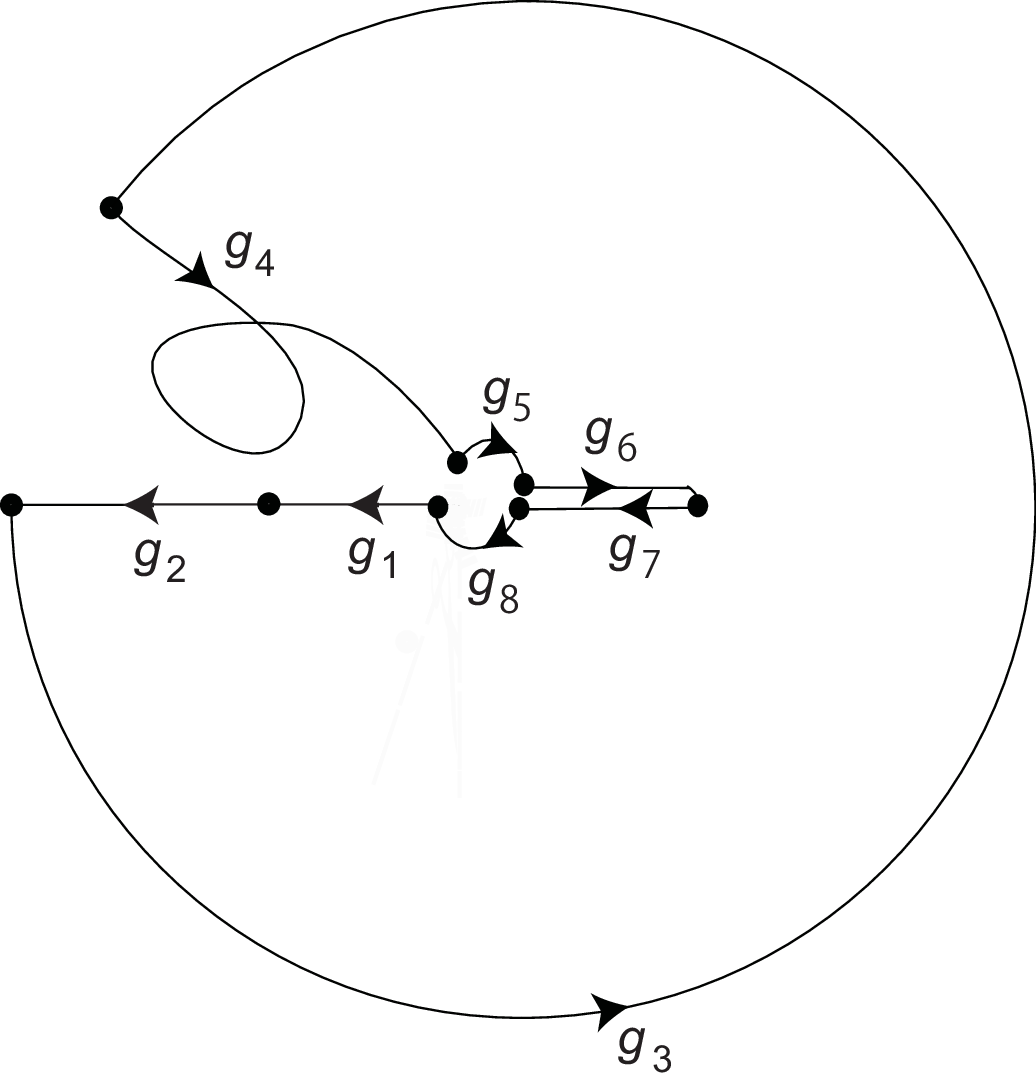}
\end{center}
\end{minipage}
\caption{The curves $c_k$ and $g_k$ $(p>1, -1<r<0)$}\label{fig 6}
\end{figure}

We compute the boundary value on $c_4$: 
\be
\begin{split}
\lim_{\delta\downarrow0}h(r e^{-\I\pi/t}+\delta) 
&= \frac{t (p-1)}{2\pi} \cdot \frac{e^{-\I\pi/2}\sqrt{(B^t+r^t)(A^t+r^t)}}{r^{t+1} e^{-\I\pi(t+1)/t}} \\
&=  \frac{t (p-1)\sqrt{(B^t+r^t)(A^t+r^t)} }{2\pi r^{t+1}} \cdot \I e^{\I\pi/t}, \qquad r>0,   
\end{split}
\ee
and hence $\re(h(r e^{-\I\pi/t}+0))<0$ as desired. This implies that $g_4$ lies on $\C^+\cup\R$. The remaining proof is similar to case (c{}) and the typical behavior of the curves $g_k$ is also similar to case (c{}). Thus we can show that $X^r \sim \UI$. 
\end{proof}

\begin{rem}[More details on case (cii)]\label{detail} For simplicity let  $d$ denote $(A^t +B^t)/2$. 
We first show that the curve $\gamma({}r)= \ell({}r)^2= (r e^{-\I t \pi}-d)^2, r>0$ is on the right side of the half line $L({}r)= d^2 + r e^{\I(2\pi-2\pi t)}, r>0$ (see Figs \ref{fig a},\ref{fig b}).  Our claim is equivalent to $\arg(\gamma(r{})-d^2)\in(0, 2\pi-2t\pi), r>0$. From Fig.\ \ref{fig a} and the current assumption $1/2 < t <1$, we can see that $\arg(\gamma({r})-d^2)\in(0,\pi)$. A simple computation shows that $\gamma(r{})-d^2 = (r e^{-\I t \pi}-d)^2 -d^2 = r^2 \cos(2 t\pi) -2d r \cos(t\pi) -2\I r \sin(t\pi)(r \cos(t\pi)-d)$. After some more computations, one has 
\be
\im( (\gamma({r})-d^2)e^{-\I(2\pi-2t\pi)}) = -2 d r \sin(t\pi) <0.  
\ee
Therefore $\arg(\gamma({r})-d^2)< 2\pi-2t\pi$. 

Next, since $d^2=(\frac{A^t+B^t}{2})^2>(\frac{B^t-A^t}{2})^2$, the arguments of the curve $\gamma -(\frac{B^t-A^t}{2})^2$ still lie in $(0, 2\pi-2t\pi)$. Hence the arguments of the curve $-\gamma +(\frac{B^t-A^t}{2})^2$ lie in $(-\pi, (1-2 t)\pi)$. 

\begin{figure}[htpb]
\begin{minipage}{0.5\hsize}
\begin{center}
\includegraphics[width=50mm,clip]{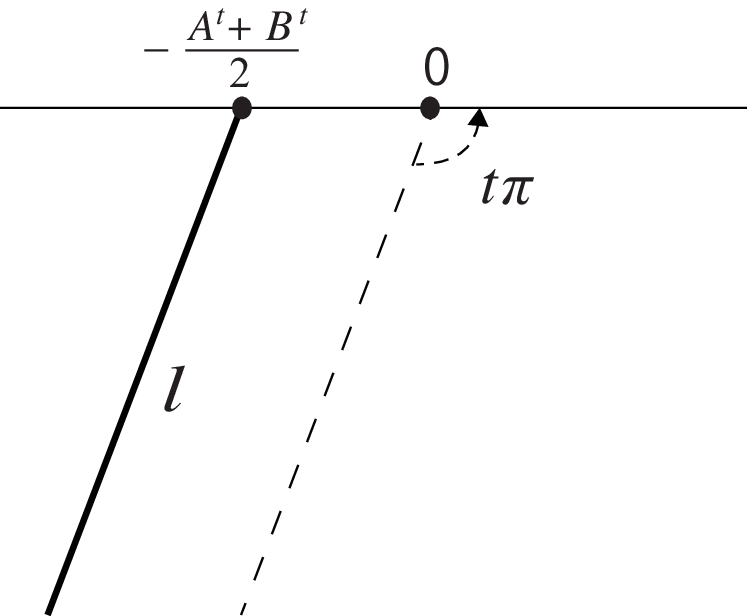}
\end{center}
  \hangcaption{$\ell: (x-\I0)^t-\frac{A^t +B^t}{2}, x<0$}\label{fig a}
  \end{minipage}
\begin{minipage}{0.5\hsize}
\begin{center}
\includegraphics[width=50mm,clip]{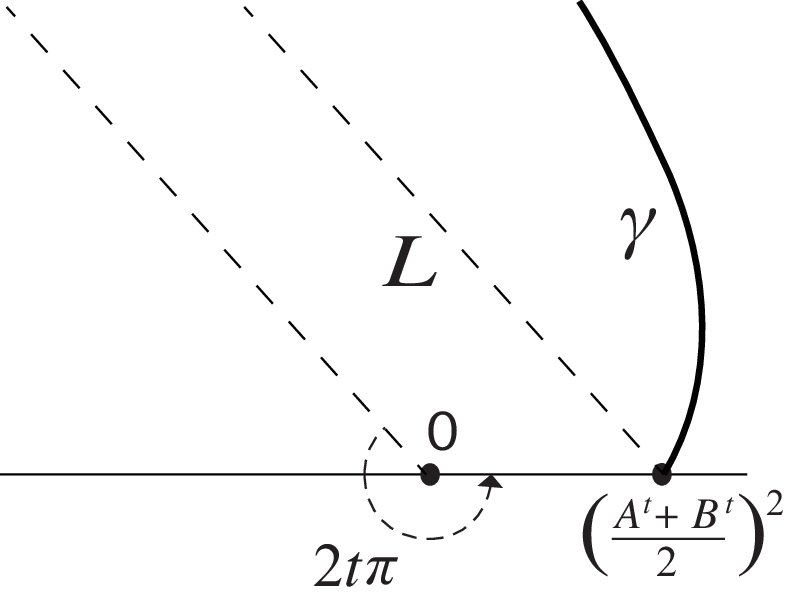}
\end{center}
\hangcaption{$\gamma: \left((x-\I0)^t-\frac{A^t +B^t}{2}\right)^2, x<0$}\label{fig b}
\end{minipage}
\end{figure}
\end{rem}

Finally we prove Theorem \ref{Sym Power Beta}, i.e.\ the FID properties of symmetric distributions. 
The negative result in Theorem \ref{Sym Power Beta}\eqref{Sym Beta not FID} on the symmetrized powers of beta rvs entails the following fact. 
 \begin{lem}\label{lem pdf 0}
Suppose that a probability measure $\mu$ on $\R$ is symmetric and $\mu|_{(-\epsilon,\epsilon)} = p(x)1_{(-\epsilon,\epsilon)}(x)\,dx$ for some $\epsilon>0$ and some continuous function $p$ on $(-\epsilon,\epsilon)$. 
 If $p(0)=0$, then $\mu \notin\FID$. 
 \end{lem}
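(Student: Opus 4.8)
The plan is to argue by contradiction: assume $\mu\in\FID$ and compare two descriptions of the Voiculescu transform $\varphi_\mu$ along the imaginary axis. One description comes from the hypothesis $p(0)=0$, which makes the reciprocal Cauchy transform $F_\mu:=1/G_\mu$ blow up as we approach $0$ vertically; the other comes from the free L\'evy--Khintchine representation that $\mu\in\FID$ guarantees, which forces $\varphi_\mu$ to grow only sublinearly. These two behaviors will turn out to be incompatible.

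First I would record the consequences of symmetry and of $p(0)=0$. Since $\mu$ is symmetric, $G_\mu(\I t)$ is purely (negative-)imaginary for $t>0$, so $F_\mu(\I t)=\I\phi(t)$ with a real-analytic function $\phi>0$ on $(0,\infty)$. Because $p$ is continuous at $0$ with $p(0)=0$, the Poisson-kernel limit gives $\int_\R \frac{t}{x^2+t^2}\,\mu(dx)\to \pi p(0)=0$ as $t\downarrow0$, so $G_\mu(\I t)\to 0$ and hence $\phi(t)=|F_\mu(\I t)|\to\infty$ as $t\downarrow0$. (By contrast $\phi(t)\sim t\to\infty$ as $t\to\infty$, so $\phi$ is far from injective; this non-injectivity is the geometric heart of the obstruction.) On the FID side, $\varphi_\mu$ extends analytically to all of $\C^+$ and admits the representation $\varphi_\mu(z)=\gamma+\int_\R \frac{1+xz}{z-x}\,\nu(dx)$ with $\gamma\in\R$ and $\nu\geq0$ finite \cite{BV93}. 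Symmetry forces $\varphi_\mu(\I s)=-\I\psi(s)$ to be purely imaginary, and the representation yields both $\psi(s)=\int_\R \frac{s(1+x^2)}{s^2+x^2}\,\nu(dx)\geq0$ and the sublinear growth $\psi(s)/s=\int_\R \frac{1+x^2}{s^2+x^2}\,\nu(dx)\to0$ as $s\to\infty$ (the integrand is $\leq 1$ for $s\geq1$ and tends to $0$ pointwise, so dominated convergence applies).

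The two sides are linked by the functional identity $\varphi_\mu(F_\mu(w))=w-F_\mu(w)$. A priori this holds only for $w$ in a truncated cone near $\infty$, where $F_\mu^{-1}$ is the genuine inverse. The crucial use of the FID hypothesis is that, once $\varphi_\mu$ continues to all of $\C^+$, both sides of the identity are analytic on the connected set $\C^+$ and agree near $\infty$, so by the identity theorem the identity holds for \emph{every} $w\in\C^+$. Evaluating at $w=\I t$ for small $t>0$ gives $\varphi_\mu(\I\phi(t))=\I t-\I\phi(t)$, that is, $\psi(\phi(t))=\phi(t)-t$. Dividing by $\phi(t)$ and letting $t\downarrow0$, the left-hand side tends to $0$ (since $\phi(t)\to\infty$ and $\psi(s)/s\to0$) while the right-hand side tends to $1$; this contradiction proves $\mu\notin\FID$.

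The step I expect to be the main obstacle is precisely the global validity of the functional identity: one must verify that the continuation of $\varphi_\mu$ furnished by FID is the same analytic function obtained by composing with $F_\mu$, so that the identity theorem can be invoked on all of $\C^+$ rather than merely near infinity. Everything else — the purely-imaginary reduction from symmetry, the Poisson-kernel computation giving $\phi(t)\to\infty$, and the sublinear bound $\psi(s)=o(s)$ — is routine once this bridge is in place. It is also reassuring that the argument genuinely uses $p(0)=0$: when $p(0)>0$ one has $\phi(t)\to 1/(\pi p(0))<\infty$, the ratio $\psi(\phi(t))/\phi(t)$ no longer approaches $1$, and no contradiction arises.
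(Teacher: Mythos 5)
Your proof is correct, but it takes a genuinely different route from the paper's. The paper argues in two lines: symmetry and $p(0)=0$ give $G_\mu(\I y)\to 0$ as $y\downarrow 0$, hence $1/G_\mu$ blows up at the origin, and this directly contradicts a cited regularity theorem (Belinschi's thesis, Proposition 2.8) that for $\mu\in\FID$ the reciprocal Cauchy transform extends continuously to $\C^+\cup\R$ with values in $\C^+\cup\R$. You start from the same observation that $\phi(t)=|F_\mu(\I t)|\to\infty$, but instead of invoking that boundary-regularity result you push the blow-up through the functional equation $\varphi_\mu(F_\mu(w))=w-F_\mu(w)$ --- whose extension to all of $\C^+$ by the identity theorem you correctly single out and correctly justify, since $F_\mu(\C^+)\subset\C^+$ and $\varphi_\mu$ is entire on $\C^+$ for FID measures --- and you contradict the sublinear growth $\psi(s)=o(s)$ that the free L\'evy--Khintchine representation forces along $\I(0,\infty)$. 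The individual steps all check out: the Poisson-kernel limit giving $G_\mu(\I t)\to -\I\pi p(0)=0$, the purely imaginary reduction by symmetry, the dominated-convergence bound $\psi(s)/s=\int_\R\frac{1+x^2}{s^2+x^2}\,\nu(dx)\to 0$ for finite $\nu$, and the terminal comparison $\psi(\phi(t))/\phi(t)=1-t/\phi(t)\to 1$ against $\to 0$. What each approach buys: the paper's is shorter but leans on a deeper regularity theorem for FID measures, while yours is longer but self-contained at the level of the Bercovici--Voiculescu representation and the standard inversion of $F_\mu$ near infinity; both use the symmetry of $\mu$ essentially, to stay on the imaginary axis.
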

 \begin{proof} Suppose that $\mu\in\FID$. 
The Cauchy transform of $\mu$ vanishes at 0 since $\re(G_\mu(\I y))=0$ and $\lim_{y\downarrow0}G_\mu(\I y)=-\I\pi p(0)=0$. This contradicts the fact that the reciprocal Cauchy transform $1/G_\mu(z)$ extends to a continuous function on $\C^+\cup\R$ into itself (see \cite[Proposition 2.8]{Bel05}). 
 \end{proof}

\begin{proof}[Proof of Theorem \ref{Sym Power Beta}] 
The negative result \eqref{Sym Beta not FID} holds since if $p>r>0$ or $r\leq0$ then the law of $B X^r$ satisfies the assumptions of Lemma \ref{lem pdf 0}. 

We will prove the case \eqref{Sym Beta} and later give some comments on how to prove the HCM case (1). 
The idea is similar to the proof of Theorem \ref{thm2}, so we only mention significant changes of the proof. 
We follow the notations in the proof of Theorem \ref{thm2}. First we will check condition (b) in Definition \ref{UIS}. 
The pdf of $B X^r$ is given by 
\be
\wf(x) = \frac{s}{ 2 B(p,q)} \cdot |x|^{p s-1} (1-|x|^s)^{q-1}\,1_{(-1,1)\setminus\{0\}}(x).    
\ee 
The restriction $\wf|_{(0,1)}$ has the analytic continuation $\wf(z)=\frac{1}{2}f(z)$ in $\C^+\cup(0,1)\cup\C^-$ where $f(x)$ is the pdf of $X^r$. Note then that $\wf(x+\I0)\neq \wf(x)$ for $x\in(-1,0)$. Also $G_{B X^r}(z)= \frac{1}{2}(G_{X^r}(z)-G_{X^r}(-z))$ has analytic continuation to $\C^+\cup(0,1)\cup\C^-$ and 
\be
\begin{split}
G_{B X^r}(z) &= \frac{1}{2}\left(\widetilde{G}_{X^r}(z)-2\pi\I \wf(z)\right)-\frac{1}{2}\widetilde{G}_{X^r}(-z)\\
&= \widetilde{G}_{B X^r}(z)-\frac{1}{2}\pi\I f(z),\qquad z\in\C^-. 
\end{split}
\ee

We take curves $c_k$ in Fig.\ \ref{fig A} and consider the image curves $g_k=G_{B X^r}(c_k)$. 
\begin{figure}[htpb]
\begin{minipage}{0.5\hsize}
\begin{center}
\includegraphics[width=50mm,clip]{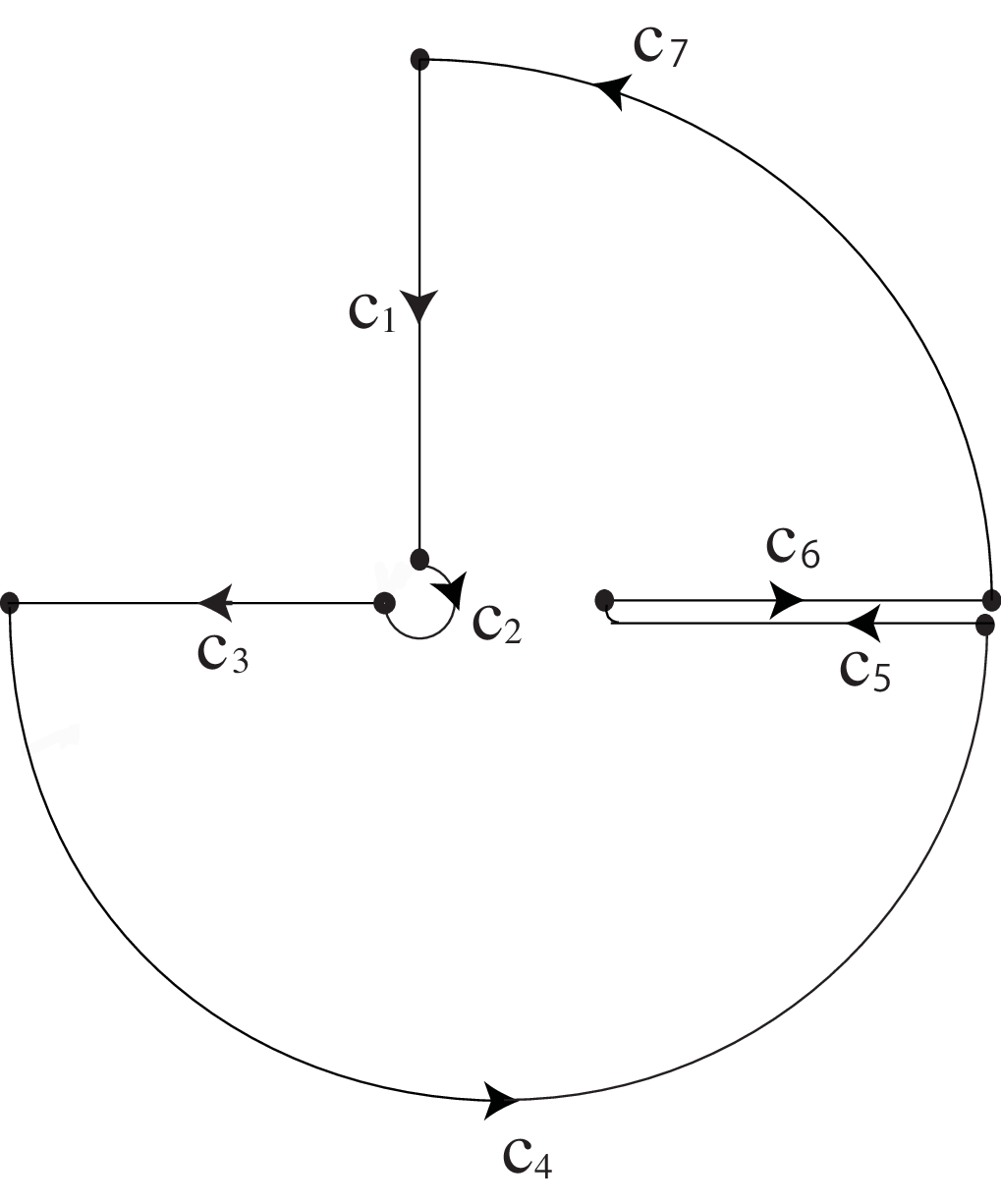}
\end{center}
  \end{minipage}
\begin{minipage}{0.5\hsize}
\begin{center}
\includegraphics[width=50mm,clip]{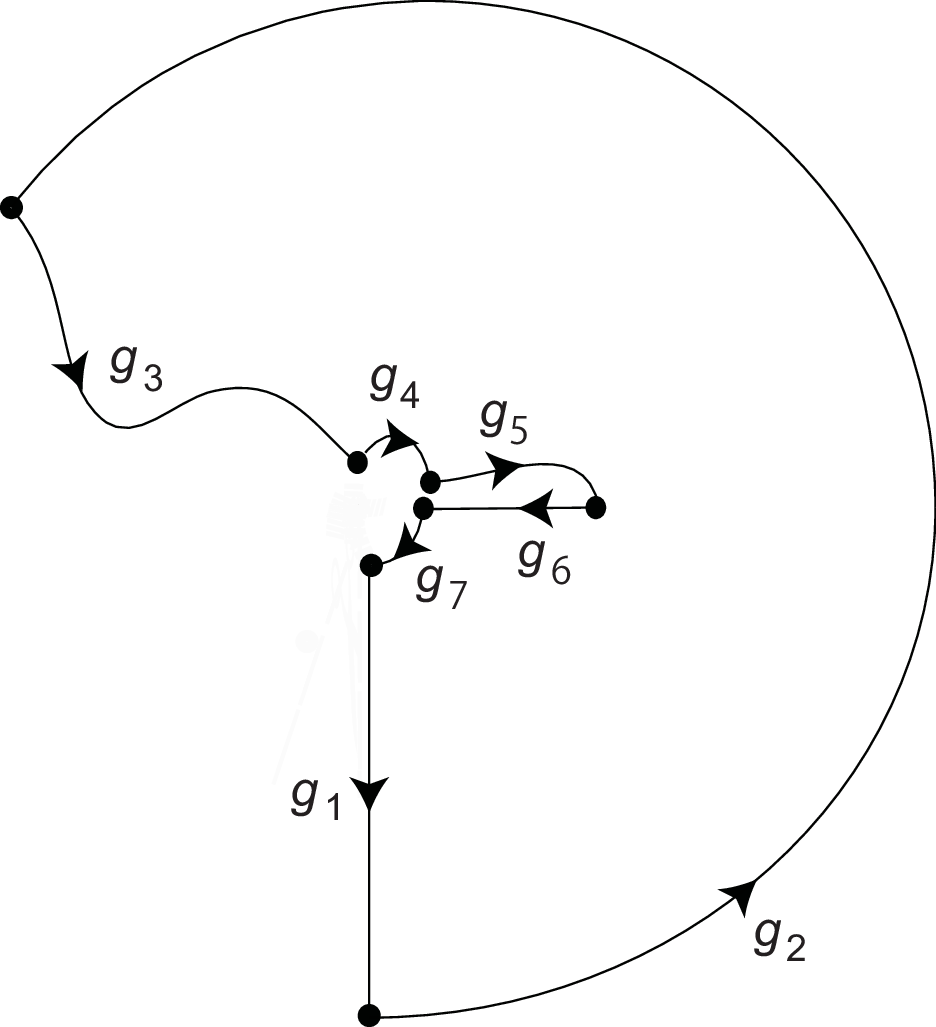}
\end{center}
\end{minipage}
\caption{The curves $c_k, g_k$}\label{fig A}
\end{figure}

We have proved  $\re(f(x-\I0)) \leq0$ for $x<0$ and $x>1$ in the proof of Theorem \ref{thm2}, so the curves $g_3,g_5$ are contained in $\C^+\cup\R$.  

The function $G_{B X^r}$ extends continuously to 
\be
\C^+\cup(0,1)\cup\C^- \cup([1,\infty)+\I0)\cup([1,\infty)-\I0)
\ee
as we proved in Theorem \ref{thm2}, so $g_5\cup g_6$ is a continuous curve.  

We may use the asymptotics \eqref{eq cauchy near 0} (we use the same notation $a$ for the constant) and obtain   
\be\label{eq009}
\begin{split}
G_{B X^r}(z) 
&= \frac{1}{2}(G_{X^r}(z)-G_{X^r}(-z)) \\
&=\frac{a}{2}(-(-z)^{p s-1}+z^{p s-1}) + o(|z|^{p s-1})\\
&= a \cos\left(\frac{\pi p s}{2}\right) e^{-\I \frac{\pi p s}{2}}z^{p s-1} +o(|z|^{p s-1})
\end{split}
\ee
as $z\to0, \arg(z) \in(-\pi,\pi)$. So when $z$ goes along a small circle centered at 0 from $\arg(z)=\pi/2$ to $\arg(z)=-\pi$, the Cauchy transform $G_{B X^r}(z) $ goes along a large circle-like curve from the angle $-\pi/2$ to $(1-\frac{3}{2} p s)\pi $ $(>0$  since $p s \leq1/2$) with small errors. This suggests that the curve $g_2$ seems as in Fig.\ \ref{fig A}.  

The asymptotics \eqref{eq009} also implies that $\frac{1}{\I} G_{B X^r}(\I y)\to -\infty$ as $y\downarrow0$. It is elementary to show that $G_{B X^r}(z)\to0$ as $|z|\to \infty, z\in\C^+$. Therefore the curves $g_1,g_7$ are as shown in Fig.\ \ref{fig A}. 

Thus each point of $\C^-\cap\I\C^-$ is surrounded by $g_1\cup\cdots \cup g_7$ exactly once when the curve $c_1 \cup\cdots\cup c_7$ is sufficiently close to the boundary of $\C^-\cup\I\C^-\setminus[1,\infty)$. Therefore $G_{B X^r}^{-1}$ has univalent analytic continuation to $\C^-\cap\I \C^-$ such that $G_{B X^r}^{-1}(\C^-\cap\I \C^-)\subset \C^-\cup\I \C^-$. This is condition (b) in Definition \ref{UIS}.

We will check condition (a) in Definition \ref{UIS}. By calculus we can show that $w'(x)<0$ for $x\in(0,1)$ and hence the distribution of $B X^r$ is unimodal. The map $G_{B X^r}$ is univalent in $\C^+$ thanks to \cite[Theorem 3]{Kap52} (see \cite[Theorem 39]{AA75} for a different proof). Recall that $\frac{1}{\I} G_{B X^r}(\I y)\to -\infty$ as $y\downarrow0$ and that $G_{B X^r}(\I y)\to0$ as $y\to \infty$. Therefore, $G_{B X^r}$ maps a neighborhood of $\I(0,\infty)$ onto a neighborhood of $\I(-\infty,0)$ bijectively. Thus we have condition (a) and therefore $B X^r \sim \UIS$.

For the HCM case, we may assume that $W$ takes only finitely many values. Then the pdf of $X$ is of the form \eqref{pdf HCM} and $r\geq1$. By calculus we can show that the pdf of $W X^r$ is unimodal. Then the proof is similar to the case of beta rvs.  
\end{proof}

According to Mathematica, the Cauchy transform of $X^r$ has an expression in terms of generalized hypergeometric functions when $X\sim \Beta(p,q)$ and at least $r=3/2,4$. Figs \ref{dom 1}--\ref{dom 4} show the numerical computation of the domain $G_{B X^{r}}^{-1}(\C^-\cap \I\C^-)$ which equals a connected component of $\{z\in \C^-\cup\I\C^-\setminus[1,\infty)\mid G_{B X^r}(z)\in\C^-\}$. 
These figures suggest that the law of $B X^r$ belongs to $\UI$ since the domain seems to be contained in the right half-plane, but there is no rigorous proof. 
\begin{figure}[!t]
\begin{minipage}{0.5\hsize}
\begin{center}
\includegraphics[width=50mm,clip]{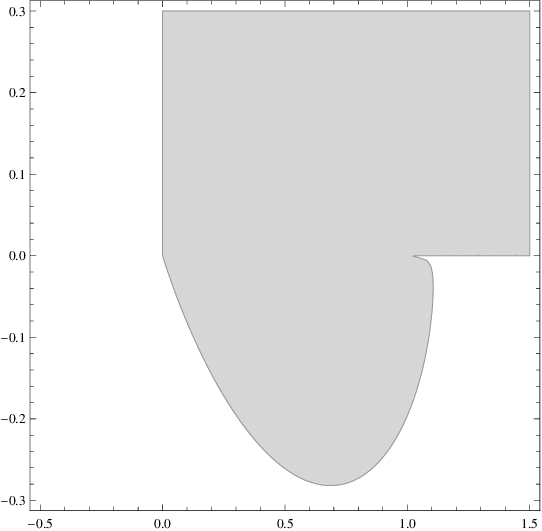}
\hangcaption{The domain $G_{B X^{3/2}}^{-1}(\C^-\cap \I\C^-)$ when $X \sim \Beta(1/2, 1.501)$}\label{dom 1}
\end{center}
\end{minipage}
\begin{minipage}{0.5\hsize}
\begin{center}
\includegraphics[width=50mm,clip]{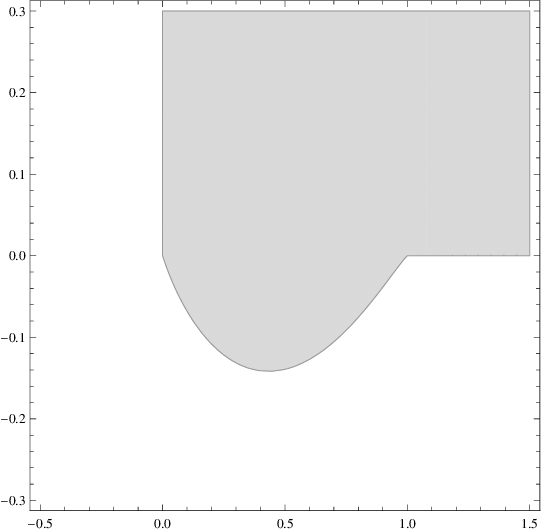}
\hangcaption{The domain $G_{B X^{3/2}}^{-1}(\C^-\cap \I\C^-)$ when $X \sim \Beta(1/2, 2)$}\label{dom 2}
\end{center}
  \end{minipage}
\begin{minipage}{0.5\hsize}
\begin{center}
\includegraphics[width=50mm,clip]{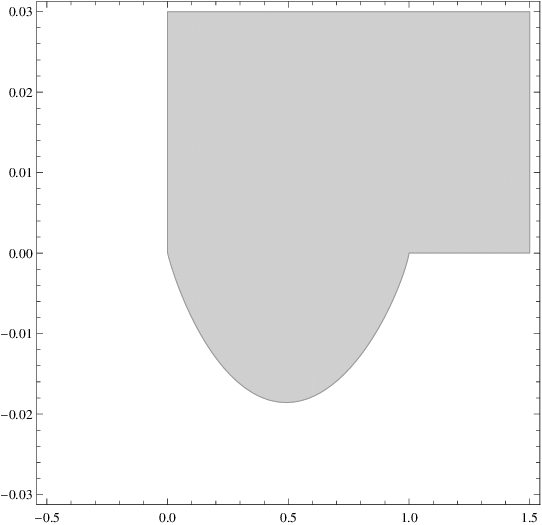}
\hangcaption{The domain $G_{B X^{4}}^{-1}(\C^-\cap \I\C^-)$ when $X \sim \Beta(1/3, 1.8)$}\label{dom 3}
\end{center}
  \end{minipage}
\begin{minipage}{0.5\hsize}
\begin{center}
\includegraphics[width=50mm,clip]{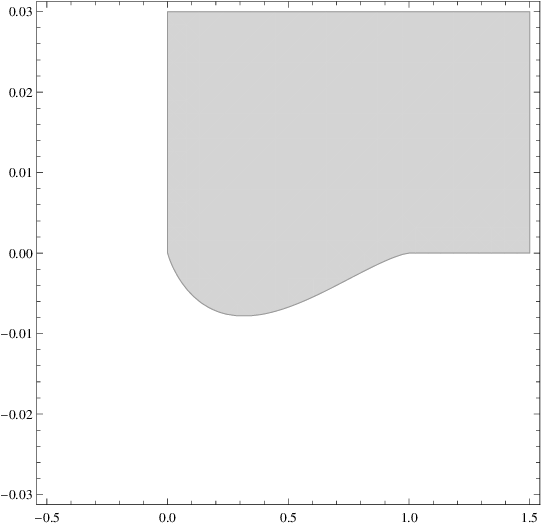}
\hangcaption{The domain $G_{B X^{4}}^{-1}(\C^-\cap \I\C^-)$ when $X \sim \Beta(1/3, 2.4)$}\label{dom 4}
\end{center}
\end{minipage}
\end{figure}

\section{Analogy between classical and free probabilities}\label{sec4}
The free analogue of the normal distribution is the semicircle distribution, and nothing else has been proposed so far. However, three different kinds of ``free gamma distributions'' have been proposed in the literature: Anshelevich defined a free gamma distribution in terms of orthogonal polynomials \cite[p.\ 238]{Ans03}; P\'erez-Abreu and Sakuma defined a free gamma distribution in terms of the Bercovici-Pata bijection \cite{PAS08}, whose property was investigated in details by Haagerup and Thorbj{\o}rnsen \cite{HT14}.
The third definition of a free gamma distribution is just the free Poisson distribution. This comes from an analogy between a characterization of gamma distributions by Lukacs and that of free Poisson distributions by Szpojankowski. 
In \cite{Luk55} Lukacs proved that non-degenerate and independent rvs $X,Y>0$ have gamma distributions with the same scale parameter if and only if $\frac{X}{X+Y}, X+Y$ are independent. The free probabilistic analogue gives us free Poisson distributions. Namely, in \cite[Theorem 4.2]{Szp1} and \cite[Theorem 3.1]{Szp2} Szpojankowski proved that bounded non-degenerate free rvs $X,Y\geq0$ such that $X\geq \epsilon I$ for some $\epsilon>0$ have free Poisson distributions with the same scale parameter if and only if $(X+Y)^{-1/2}X(X+Y)^{-1/2}, X+Y$ are free. Note that the assumption $X \geq \epsilon I$ implies that $p>1$ when $X\sim\MP(p,\theta)$. 

From the point of view of the third definition of a free gamma distribution as the free Poisson distribution, our Theorem \ref{thm3} on powers of free Poisson rvs has a good correspondence with the classical case:  

\begin{thm}\label{Power gamma}
Suppose that $Y$ follows a gamma distribution. Then $Y^r \sim\GGC$ for $r\in(-\infty,0]\cup[1,\infty)$. If $r\in(0,1)$ then $Y^r\notin\ID$.  
\end{thm}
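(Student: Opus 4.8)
The plan is to reduce everything to the hyperbolic-complete-monotonicity calculus for the easy ranges of $r$ and to isolate the two genuinely delicate cases. By scale invariance of $\GGC$ and $\ID$ we may assume $Y\sim \g(p,1)$, so that $Y$ has density $\Gamma(p)^{-1}x^{p-1}e^{-x}$. First I would record that $Y$ itself is $\HCM$: for fixed $u>0$,
\[
 f(uv)f(u/v)=\frac{1}{\Gamma(p)^2}\,u^{2(p-1)}\,e^{-u(v+1/v)}=\frac{1}{\Gamma(p)^2}\,u^{2(p-1)}\,e^{-uw},\qquad w=v+1/v,
\]
which is completely monotone in $w$. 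Hence, by the power stability of $\HCM$ under exponents $|r|\ge1$ together with $\HCM\subset\GGC$, one gets $Y^r\in\GGC$ for every $r$ with $|r|\ge1$, which covers $r\ge1$ and $r\le-1$. The value $r=0$ is trivial: $Y^0=\delta_1$ is the weak limit of $\g(n,1/n)$ and therefore lies in $\GGC$.

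The first delicate range is $r\in(-1,0)$, where the $\HCM$ route provably breaks down. Writing $r=-s$ with $s\in(0,1)$, the density of $Y^{-s}$ is proportional to $u^{-p/s-1}e^{-u^{-1/s}}$, and the relevant product factor is $e^{-c(v^{a}+v^{-a})}$ with $a=1/s>1$, which is \emph{not} completely monotone in $w=v+1/v$ (for $p=1,\ s=1/2$ it reduces to $e^{2c}e^{-cw^2}$, which is not completely monotone); thus $Y^{-s}\notin\HCM$ and I cannot conclude via powers of $\HCM$. Instead I would invoke the Thorin description of $\GGC$: one must certify that the Thorin, equivalently the L\'evy, measure attached to $Y^{-s}$ is a genuine nonnegative measure. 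Since the Mellin transform of $Y^{-s}$ is the single-Gamma expression $\Gamma(p-sz)/\Gamma(p)$, I would extract the candidate L\'evy density from the jump of $\im(\log\hat\mu)$ across the negative axis by Stieltjes-type inversion and check its sign; alternatively, I would appeal directly to Bondesson's explicit treatment of negative powers of the gamma law, which yields $Y^{r}\in\GGC$ for all $r\le0$.

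For the negative statement $r\in(0,1)$ I would argue by tail asymptotics rather than any inversion. Here $-\log P(Y^r>x)=-\log P(Y>x^{1/r})\sim x^{1/r}$ with $1/r>1$, so the right tail of $Y^r$ decays faster than $e^{-Cx\log x}$ for every $C$. On the other hand, a non-degenerate law on $[0,\infty)$ that is infinitely divisible is the time-$1$ marginal of a subordinator with a nonzero L\'evy measure $\nu$; choosing a level $m>0$ with $\nu((m,\infty))\in(0,\infty)$ and forcing $\lceil x/m\rceil$ jumps of size $>m$ gives the lower bound $P(Y^r>x)\ge e^{-Cx\log x}$ for large $x$. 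This is incompatible with the $e^{-x^{1/r}}$ decay, so $Y^r\notin\ID$.

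The genuine obstacle is the range $r\in(-1,0)$: the clean $\HCM$ calculus does not reach it, and the crux is to establish nonnegativity of the Thorin measure of $Y^{-s}$ for $0<s<1$ — either through the sign analysis of the Stieltjes inversion of $\log\hat\mu$, or by producing a factorization of $Y^{-s}$ as an independent product of $\GGC$ factors and invoking the closure of $\GGC$ under independent products. The two $\HCM$ ranges, the case $r=0$, and the non-ID tail argument are by comparison routine.
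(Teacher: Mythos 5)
Your treatment of $|r|\geq 1$ and of $r\in(0,1)$ is essentially the paper's. For $|r|\geq1$ the paper likewise invokes the chain ``gamma laws lie in $\HCM$'', stability of $\HCM$ under powers with $|r|\geq1$, and $\HCM\subset\GGC$ (crediting Thorin for the original proof); for $r\in(0,1)$ it uses exactly the super-exponential tail obstruction, citing Ruegg and Steutel--van Harn for the bound $-\log P(X>x)=O(x\log x)$ that you rederive via the compound-Poisson lower bound. Those parts are fine (your use of the subordinator representation of a nonnegative $\ID$ law, needed so that the ``many large jumps'' event forces $X>x$, is legitimate), and the case $r=0$ is indeed trivial.

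The gap is the range $r\in(-1,0)$, which you yourself flag as the crux and then do not close. Your first suggestion --- extract the candidate Thorin/L\'evy density of $Y^{-s}$ from the Mellin transform $\Gamma(p-sz)/\Gamma(p)$ and verify its nonnegativity --- is the right kind of idea, but it is precisely the hard analytic work, and you do not carry it out. Your fallback, ``appeal directly to Bondesson's explicit treatment of negative powers of the gamma law,'' is not available: Bondesson's calculus yields $Y^r\in\HCM\subset\GGC$ only for $|r|\geq1$, and the case $r\in(-1,0)$ was a long-standing open problem, settled only by Bosch and Simon (Bernoulli 21 (2015), 2552--2568), which is exactly the reference the paper cites for this range. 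So as written the proposal establishes the theorem only for $r\in\{0\}\cup(-\infty,-1]\cup[1,\infty)$ together with the non-$\ID$ statement on $(0,1)$; the interval $(-1,0)$ requires either the Bosch--Simon argument or an equivalent proof of nonnegativity of the Thorin measure, neither of which is supplied.
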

The result for $|r|\geq1$ was first proved by Thorin \cite{Tho78}. It also follows from some facts on the class HCM: all gamma distributions are contained in $\HCM$; $\HCM \subset \GGC$; $X\sim \HCM$ implies $X^r \sim \HCM$ for $|r|\geq1$ \cite{Bon92}. The result for $r\in(-1,0)$ was recently proved by Bosch and Simon \cite{BS}. If $r\in(0,1)$ then  $Y^r$ is not ID since the tail decreases in a more rapid way than what is allowed for non-normal ID distributions (see \cite{R70} or \cite[Chap.\ 4., Theorem 9.8]{SVH03} for the rigorous statement).  

Therefore we have a complete correspondence between the ID property of powers of gamma rvs and the FID property of free Poisson rvs, except $r\in (0,1)$. We pose a conjecture on this missing interval. 
\begin{conj} \label{conj fp} 
If $X\sim \MP(p,1)$, $r\in (0,1)$ and $p>0$ then $X^r\not\sim \FID$. 
\end{conj}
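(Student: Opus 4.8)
The plan is to attack the conjecture through the necessary direction of the free L\'evy--Khintchine characterization \cite{BV93}: if the law $\mu$ of $X^r$ were FID, then its Voiculescu transform $\varphi_\mu$ would extend analytically to all of $\C^+$ with $\im \varphi_\mu\le 0$ there. Using the relation $\varphi_\mu(z^{-1}) = G_\mu^{-1}(z)-z^{-1}$ and letting $z\to x-\I0$ along the lower boundary of $\C^-$ (so that $z^{-1}$ tends to a real point from inside $\C^+$, where $\im\varphi_\mu\le0$), this global bound degenerates into the clean boundary inequality
\be
\im G_\mu^{-1}(x-\I0) \leq 0, \qquad x\in\R,
\ee
valid wherever the boundary value exists. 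For $r=1$ this is visibly satisfied, since $G_{\MP(p,1)}^{-1}(x)=x^{-1}+p(1-x)^{-1}$ is real on $\R$. Hence it would suffice to produce a real interval on which $\im G_{X^r}^{-1}(x-\I0)>0$, and this is the statement I would try to establish.

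To locate such an interval I would rerun the contour construction from the proof of Theorem \ref{thm3}, now with $s=1/r>1$, keeping the density $f(x)=\frac{s}{2\pi}x^{-1}\sqrt{(b^s-x^s)(x^s-a^s)}\,1_{(a,b)}(x)$ and its analytic continuation from Lemma \ref{lem analytic cauchy}. The decisive quantity is the boundary value $f(x-\I0)$ for $x<0$, through which $\im G_{X^r}(x-\I0) = -2\pi\,\re f(x-\I0)$. For $s<1$ the paper shows $\re f(x-\I0)\le0$ via identity \eqref{identity}, because $z\in\C^\pm$ forces $z^s\in\C^\pm$ and hence $\sqrt{(b^s-z^s)(z^s-a^s)}\in\{\re>0\}$; for $s>1$ this mechanism fails, since $z^s=|x|^se^{-\I\pi s}$ wraps past the cut $(-\infty,0]$, so the square root leaves $\{\re>0\}$ and $\re f(x-\I0)$ reverses sign on part of $(-\infty,0)$. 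This sign reversal is the analytic footprint of non-infinite-divisibility, the free counterpart of the rapid tail decay that destroys classical infinite divisibility of $Y^r$ in Theorem \ref{Power gamma}. I would then track, along the lines of Figs \ref{fig 3}--\ref{fig 6}, how this reversal forces the self-consistent preimage contour $\bigcup_k c_k$ of $\C^-$ to acquire a sub-arc inside $\C^+$; the values $G_{X^r}^{-1}(x-\I0)$ along that sub-arc are precisely points with $\im>0$, contradicting the displayed inequality.

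The hard part is that, unlike the positive-power case where one only needs a univalent inverse (class $\UI$ is merely \emph{sufficient} for FID), disproving FID demands global control of $\varphi_{X^r}$ on all of $\C^+$, and $G_{X^r}^{-1}$ admits no closed form. Converting the local sign reversal of $\re f(x-\I0)$ into a genuine excursion of the inverse into $\C^+$ — as opposed to a harmless change in winding that destroys univalence but not infinite divisibility — is the crux, precisely because failure of membership in $\UI$ does not imply failure of FID. A second obstacle is uniformity: the conjecture asserts non-FID for every $p>0$ and every $r\in(0,1)$, yet the geometry near the edges $a,b$ and near the branch point $0$ (where $G_{X^r}$ is singular from $\C^-$) varies with $p$, so the atomic regime $p<1$ and the ranges $s\in(1,2)$ versus $s>2$ would each require separate handling. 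The cleanest reformulation of what remains is to compute the candidate free L\'evy density $-\tfrac1\pi\im\varphi_{X^r}(x+\I0)$ and show it is genuinely signed; securing that inequality for all parameters is where I expect the real work to lie.
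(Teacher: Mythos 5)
You should first note that the statement you are trying to prove is labeled a \emph{Conjecture} in the paper: the author does not prove it, and offers only a partial result (via \cite[Theorem 5.1]{Has14}, for $p=1$ and $r$ in certain subintervals of $(0,1)$) together with numerical evidence from Hankel determinants of free cumulants. So there is no proof in the paper to compare against, and your proposal does not supply one either: it is a strategy outline whose two decisive steps are explicitly left open by you. The genuine gap is this. Your necessary condition $\im G_\mu^{-1}(x-\I0)\leq 0$ is a reasonable reformulation of the Bercovici--Voiculescu characterization, but everything you actually verify concerns the \emph{forward} map: you observe that for $s=1/r>1$ the identity \eqref{identity} no longer forces $(b^s-z^s)(z^s-a^s)$ into $\C\setminus(-\infty,0]$, so the inequality $\re(f(x-\I0))\leq 0$ on $(-\infty,0)$ --- which is what places the image contour in $\overline{\C^+}$ in the proof of Theorem \ref{thm3} --- may fail. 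That is only the failure of a \emph{sufficient} mechanism. It does not produce an interval on which $\im G_{X^r}^{-1}(x-\I0)>0$, because once univalence of $G_{X^r}$ on the relevant region is lost, the analytically continued $\varphi_{X^r}$ on $\C^+$ is no longer read off from the boundary values of $G_{X^r}$ on your contours; as you yourself say, distinguishing a ``harmless'' loss of winding from a genuine excursion of the continued inverse into $\C^+$ is the crux, and it is not addressed.

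Two further points. First, for $s>1$ even the analytic continuation of $f$ to $\C^-$ via Lemma \ref{lem analytic cauchy} needs care: $\arg(z^s)=s\arg(z)$ leaves $(-\pi,\pi)$ near the lower edge of the cut, so the principal square root of $(b^s-z^s)(z^s-a^s)$ can hit its own branch cut and the formula $G_\mu(z)=\widetilde{G}_\mu(z)-2\pi\I f(z)$ must be re-justified before any boundary computation is meaningful. Second, the uniformity problem you flag (all $p>0$, all $r\in(0,1)$, with the atomic case $p<1$ and the regimes $s\in(1,2)$ versus $s\geq 2$ behaving differently near $0$, $a$ and $b$) is not a technicality to be deferred: the only rigorous negative results currently known cover scattered subintervals for $p=1$, and nothing in your outline indicates how the argument would close even for a single new parameter pair. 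As it stands, the proposal is a sensible research plan consistent with the paper's own discussion, but it does not prove the conjecture.
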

A partial result can be obtained from \cite[Theorem 5.1]{Has14}: $X^r \not\sim\FID$ if $X\sim\MP(1,1)$ and $r\in (\frac{1}{2}-\frac{1}{4 n+2}, \frac{1}{2}-\frac{1}{4 n+4}) \cup (\frac{1}{2}+\frac{1}{4 n}, \frac{1}{2}+\frac{1}{4 n-2})$, $n\in\N$. Another way to check the free infinite divisibility is the Hankel determinants of free cumulants. Haagerup and M\"oller computed the Mellin transform of $X\sim \MP(1,1)$ in \cite[Theorem 3]{HM13} and the moments of $X^r$ are given by 
\be\label{Moments}
\int_0^{4}x^{r n}\, \MP(1,1)(dx)=  \frac{\Gamma(1+2 r n)}{\Gamma(1+ r n)\Gamma(2+r n)},\qquad n\in\N\cup\{0\},    
\ee
so we can compute free cumulants from these moments. According to Mathematica, the 2nd Hankel determinant $K_4 K_2-K_3^2$ of the free cumulants $\{K_n\}_{n\geq2}$ is negative for $0.35 \lesssim r<1$ and $K_6$ is negative for $0.335 \lesssim r \lesssim 0.42$ (see Figs.\ \ref{fig 50}, \ref{fig 60}). So it seems that $X^r \not\sim \FID$ for $0.335 \lesssim r <1$, but this is not a mathematical proof.   
\begin{figure}[htpb]
 \begin{minipage}{0.5\hsize}
\begin{center}
\includegraphics[width=70mm,clip]{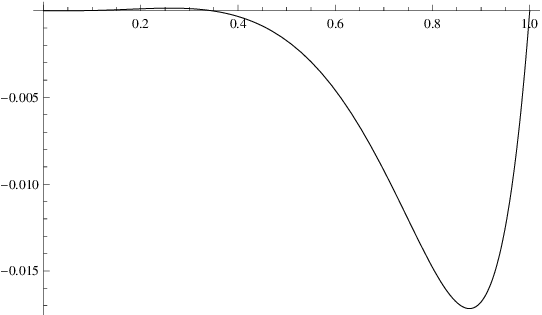}
\hangcaption{$K_4 K_2-K_3^2$ as a function of $r$.}\label{fig 50}
\end{center}
\end{minipage}
\begin{minipage}{0.5\hsize}
\begin{center}
\includegraphics[width=70mm,clip]{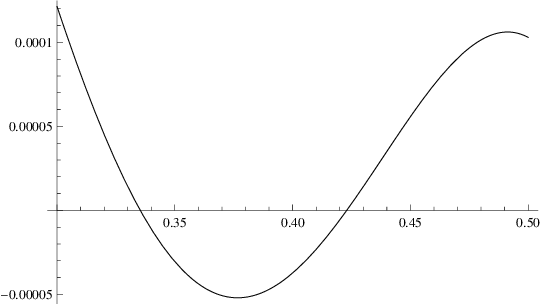}
\caption{$K_6$ as a function of $r$.}\label{fig 60}
\end{center}
\end{minipage}
\end{figure}

Our Corollary \ref{Power WS} and Corollary \ref{Sym Power WS} on powers of semicircular elements also have a good correspondence with the classical case:  
\begin{thm} Suppose $Z \sim N(0,1)$. 
\begin{enumerate}[\rm(1)]
\item\label{G1} If $r \in(-\infty,0] \cup [2,\infty)$ then $|Z|^r \sim \GGC$. 
\item\label{G2} If $r \in (0,2)$ then $|Z|^r \not\sim \ID$. 
\item\label{G3} If $r \geq2$ then $|Z|^r\,\sign(Z)\sim \ID$. 
\item\label{G4} If $1\neq r<2$ then $|Z|^r \,\sign(Z)\notin\ID$. 
\end{enumerate}
\end{thm}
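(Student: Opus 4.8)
My plan is to push everything through the square $Z^2\sim\g(\tfrac12,2)$, which is an honest gamma law, and then to use Theorem~\ref{Power gamma} for the positive statements and a normal variance--mixture representation for the symmetric ones. For \eqref{G1} and \eqref{G2} I would simply write $|Z|^r=(Z^2)^{r/2}$: since $Z^2\sim\g(\tfrac12,2)\in\GGC$, Theorem~\ref{Power gamma} applied with exponent $r/2$ settles both at once, because $r\in(-\infty,0]\cup[2,\infty)$ means $r/2\in(-\infty,0]\cup[1,\infty)$ (so $(Z^2)^{r/2}\in\GGC$), while $r\in(0,2)$ means $r/2\in(0,1)$ (so $(Z^2)^{r/2}\notin\ID$). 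No further work is needed here.

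For \eqref{G3} the plan is to realize $|Z|^r\sign(Z)$ as a normal variance mixture $\sqrt{V}\,G$ with $G\sim N(0,1)$ and an independent $V\ge0$, and then to invoke subordination: if $V\in\ID$, then $\sqrt{V}\,G$ is the time-$1$ law of a Brownian motion time-changed by the subordinator with marginal $V$, hence $\sqrt{V}\,G\in\ID$. The mixing law is forced by $V\cdot G^2\stackrel{d}{=}(Z^2)^r$, i.e.\ by the Mellin transform
\be
E[V^u]=2^{(r-1)u}\,\frac{\Gamma(\tfrac12+ru)}{\Gamma(\tfrac12+u)}.
\ee
That such a $V$ exists at all is quick: the density $p(x)=C|x|^{1/r-1}e^{-|x|^{2/r}/2}$ satisfies that $w\mapsto p(\sqrt{w})=Cw^{(1-r)/(2r)}e^{-w^{1/r}/2}$ is completely monotone for $r\ge1$, being a product of the completely monotone factors $w^{(1-r)/(2r)}$ and $e^{-w^{1/r}/2}$ (here $w^{1/r}$ is Bernstein), which is exactly the criterion for a normal scale mixture. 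The genuine difficulty, and the main obstacle of the whole theorem, is to show that for $r\ge2$ the above is the Mellin transform of a $\GGC$ (in fact $\HCM$) distribution: using the Gauss multiplication formula on $\Gamma(\tfrac12+ru)$ and $\Gamma(\tfrac12+u)$ for rational $r$, one rewrites it as a ratio of products of Gamma functions whose shape parameters interlace, which Bondesson's Mellin-transform criteria \cite{Bon92} identify as $\HCM$; the general real $r\ge2$ then follows from weak closedness of $\GGC$. Once $V\in\HCM\subset\GGC\subset\ID$ is known, matching Mellin transforms gives $|Z|^r\stackrel{d}{=}\sqrt{V}\,|G|$, symmetry upgrades this to $|Z|^r\sign(Z)\stackrel{d}{=}\sqrt{V}\,G$, and subordination finishes \eqref{G3}. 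I expect the threshold $r\ge2$ to enter precisely through the infinite divisibility of $V$, which is consistent with the negative result \eqref{G4}.

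For \eqref{G4} I would split into three regimes. When $r=0$ the law is $\tfrac12(\delta_{-1}+\delta_1)$, whose characteristic function $\cos t$ vanishes, so it is not ID. When $r<0$ the symmetric density is continuous at $0$ with $p(0)=0$ (small $|x|$ forces $|Z|$ large, killing the density); the classical companion of Lemma~\ref{lem pdf 0} then rules out ID, since a symmetric ID law has characteristic function $e^{\psi(t)}$ with $\psi$ real and $\le0$, hence strictly positive, forcing $p(0)=\tfrac{1}{2\pi}\int\phi\,dt>0$. When $0<r<2$ with $r\neq1$, the law is symmetric, non-Gaussian, with tails of order $e^{-c|x|^{2/r}}$; since $2/r>1$ this decays faster than $e^{-c'|x|\log|x|}$, which is impossible for a non-Gaussian ID law by the tail theorem (\cite{R70}, \cite[Chap.~4, Theorem~9.8]{SVH03}) already invoked for Theorem~\ref{Power gamma}. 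Thus the only step requiring real effort is the $\GGC$-identification of the mixing variance $V$ in \eqref{G3}; all remaining assertions are either a direct appeal to Theorem~\ref{Power gamma} or a short tail/characteristic-function argument.
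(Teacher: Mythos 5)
Your treatment of (1), (2) and (4) is essentially the paper's own: (1) and (2) reduce to $Z^2\sim\g(\tfrac12,2)$ and Theorem \ref{Power gamma}, and (4) splits into the Bernoulli case, the ``symmetric density vanishing at the origin'' case (the paper cites \cite[p.~58]{RSS90}; your Fourier-positivity argument is a reasonable substitute), and the thin-tail case via \cite{R70}, \cite[Chap.~4, Theorem 9.8]{SVH03}. Those parts are fine. For (3) the paper simply invokes \cite[Theorem 5]{SS77}, whereas you attempt a self-contained type-G proof. The setup is sound: the complete-monotonicity computation showing that $|Z|^r\,\sign(Z)\stackrel{d}{=}\sqrt{V}\,G$ for some $V\ge0$ is correct for all $r\ge1$, and the reduction ``$V\in\ID$ (positive, hence a subordinator marginal) $\Rightarrow\sqrt{V}\,G\in\ID$'' is standard.

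The gap is the step you yourself flag as the main difficulty: showing $V\in\HCM$ (or even just $V\in\ID$) for $r\ge2$ from the Mellin transform $2^{(r-1)u}\Gamma(\tfrac12+ru)/\Gamma(\tfrac12+u)$. As sketched, the argument cannot work, because it proves too much. Since the variance-mixture representation exists for every $r\ge1$, part (4) forces $V\notin\ID$ for $1<r<2$; so any correct proof must use $r\ge2$ essentially. But the Gauss multiplication formula produces, for \emph{every} rational $r>1$, a ratio $\rho^u\prod_i\Gamma(a_i+u/n)\big/\prod_j\Gamma(b_j+u/m)$ whose parameters interlace in exactly the same way for $r=3/2$ (numerator $\{1/6,1/2,5/6\}$, denominator $\{1/4,3/4\}$) as for $r=2$ (numerator $\{1/8,3/8,5/8,7/8\}$, denominator $\{1/4,3/4\}$); an ``interlacing $\Rightarrow$ HCM'' criterion would therefore also certify $V\in\ID$ for $r=3/2$, contradicting (4). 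Moreover no such criterion is actually identified in \cite{Bon92}: the ratio is not the Mellin transform of a quotient of independent gamma variables (since $1/E[W^u]\neq E[W^{-u}]$), and reading it as gammas times betas founders on the fact that beta laws on $(0,1)$ are neither HCM nor ID, so the closure theorems for HCM do not apply. Until you either locate and verify a criterion that genuinely discriminates $r\ge2$ from $1<r<2$, or fall back on the paper's citation of \cite[Theorem 5]{SS77}, part (3) is not proved.
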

\begin{proof}
\eqref{G1},\eqref{G2} are a special case of Theorem \ref{Power gamma} since $Z\sim N(0,1)$ implies that $Z^2$ follows the gamma distribution $(2\pi x)^{-1/2} e^{-x/2}1_{(0,\infty)}(x)\,dx$. 

\eqref{G3} follows from \cite[Theorem 5]{SS77}. 

\eqref{G4} When $r<1, r\neq0$ then the pdf of $|Z|^r \,\sign(Z)$ is symmetric and vanishes at 0. Such a distribution is not ID by \cite[p.\ 58]{RSS90}.  When $r=0$ the distribution is a symmetric Bernoulli and so not ID. 

When $1<r<2$, one may use \cite[Theorem 2]{R70} or \cite[Chap.\ 4, Theorem 9.8]{SVH03}. 
\end{proof}
 The following problems remain to be solved in order to get the complete analogy between classical and free cases. 
\begin{conj} Suppose that  $S\sim \WS(0,1)$. 
\begin{enumerate}[\rm(1)] 
\item $|S|^r\not\sim \FID$ for $r\in(0,2)$.   
\item $|S|^r\, \sign(S) \not\sim \FID$ for $r\in(1,2)$. 
\end{enumerate}
\end{conj}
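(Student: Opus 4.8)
The plan is to treat both parts through the free cumulant characterisation of free infinite divisibility and to reduce everything to powers of the free Poisson law. Since $S^2\sim\MP(1,1)$, we have $|S|^r=\MP(1,1)^{r/2}$, so part (1) is precisely the $p=1$ instance of Conjecture \ref{conj fp} with the power $r/2\in(0,1)$, while part (2) is its symmetrisation. The necessary condition I would exploit is the free L\'evy--Khintchine representation \cite{BV93}: a compactly supported $\mu$ is $\FID$ if and only if there are $a\in\R$ and a finite positive measure $\tau$ on $\R$ with $K_1=a$ and $K_{n}=\int_\R t^{n-2}\,\tau(dt)$ for all $n\geq2$, where $(K_n)_{n\geq1}$ are the free cumulants of $\mu$. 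Thus for any $\FID$ law the shifted sequence $(K_2,K_3,K_4,\dots)$ must be a Hamburger moment sequence, so all Hankel determinants $\det[K_{i+j+2}]_{0\leq i,j\leq N}$ are nonnegative; in particular $K_2\geq0$, $K_2K_4-K_3^2\geq0$ and $K_6=\int t^4\,\tau(dt)\geq0$. To disprove $\FID$ it suffices to exhibit, for each admissible $r$, one such determinant that is strictly negative.

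For part (1) the moments of $|S|^r$ are explicit: replacing $r$ by $r/2$ in \eqref{Moments} gives $\int x^{n}\,\mu_r(dx)=\Gamma(1+rn)/(\Gamma(1+rn/2)\Gamma(2+rn/2))$ for $\mu_r$ the law of $|S|^r$. First I would pass from these moments to the free cumulants $K_n(r)$ by the free moment--cumulant recursion, obtaining each $K_n(r)$ as an explicit (if unwieldy) combination of Gamma ratios. Then the task is to show that for every $r\in(0,2)$ at least one Hankel determinant $\det[K_{i+j+2}(r)]_{0\leq i,j\leq N(r)}$ is negative. The computations behind Figs.\ \ref{fig 50} and \ref{fig 60} already handle a large subrange: $K_2K_4-K_3^2<0$ covers $r$ bounded away from $0$ (roughly $r\gtrsim0.7$ after the reparametrisation) and $K_6<0$ covers a further window, so the content of the argument is to make these numerical signs rigorous, e.g.\ via monotonicity and convexity estimates on the Gamma ratios, and to reach the small-$r$ regime with higher-order determinants.

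For part (2) the law of $|S|^r\sign(S)$ is symmetric, so its odd free cumulants vanish and the measure $\tau$ above is symmetric; the necessary condition becomes that the even cumulant sequence $(K_2,K_4,K_6,\dots)$ is itself a Hamburger moment sequence (of the image of $\tau$ under $t\mapsto t^2$), so e.g.\ $K_2K_6-K_4^2\geq0$ must hold. The even moments of $|S|^r\sign(S)$ equal those of $|S|^r$, so the same Gamma ratios feed the computation, and I would again aim to find a negative Hankel determinant of the even cumulants for each $r\in(1,2)$. Note that the density of $|S|^r\sign(S)$ diverges at $0$ for $r>1$, so Lemma \ref{lem pdf 0} (which requires a density vanishing at $0$) does not apply here, in contrast with the already settled case $r<1$ of Corollary \ref{Sym Power WS}(2); the cumulant obstruction is the natural replacement.

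The hard part will be the regime $r\to0^+$ (equivalently the free-Poisson power tending to $0$), where $|S|^r\Rightarrow\delta_1$, a law that \emph{is} $\FID$. Consequently every fixed Hankel determinant $\det[K_{i+j+2}(r)]$ tends to $0$ as $r\downarrow0$, and no determinant of bounded order can stay negative on all of $(0,2)$; one is forced to use an unbounded family of increasing-order determinants together with uniform asymptotic control of the Gamma-ratio cumulants, which is delicate. An alternative, purely complex-analytic route would mimic the local analysis of $G_\mu$ near the branch point $0$ used in \cite[Theorem 5.1]{Has14} (the analysis that produced the partial non-$\FID$ intervals accumulating at the power $1/2$, i.e.\ near $r=1$ here) and try to exhibit a point of $\C^+$ at which the continued Voiculescu transform has strictly positive imaginary part, equivalently a genuinely negative free L\'evy density; carrying either route across the full interval, and especially past the small-$r$ threshold, is exactly where the conjecture resists a proof.
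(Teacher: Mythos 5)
You should first be clear about the status of this statement: in the paper it is an open \emph{conjecture}, not a theorem, and the paper offers no proof --- only a partial negative result imported from \cite[Theorem 5.1]{Has14} (non-FID intervals of powers accumulating at the free-Poisson exponent $1/2$, i.e.\ at $r=1$ in the semicircle parametrisation) and non-rigorous Mathematica evidence via Hankel determinants of free cumulants. Measured against that, your proposal is not a proof but a programme, and it is essentially the \emph{same} programme the paper sketches: the reduction of part (1) to Conjecture \ref{conj fp} at $p=1$ with exponent $r/2$, the necessary condition that $(K_2,K_3,K_4,\dots)$ be a Hamburger moment sequence coming from the free L\'evy--Khintchine representation \cite{BV93}, the even-cumulant Stieltjes condition $K_2K_6-K_4^2\geq0$ in the symmetric case, the use of the moment formula \eqref{Moments}, and the correct observation that Lemma \ref{lem pdf 0} is unavailable for $r>1$ because the density of $|S|^r\,\sign(S)$ diverges at $0$ (which is exactly why Corollary \ref{Sym Power WS}(2) stops at $r<1$). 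All of these ingredients are sound as necessary conditions.

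The genuine gap is that neither of the two steps that would convert this into a proof is carried out, and both are exactly where the conjecture is hard. First, you propose to make the numerical signs of $K_2K_4-K_3^2$, $K_6$, $K_2'K_6'-(K_4')^2$, etc.\ rigorous ``via monotonicity and convexity estimates on the Gamma ratios,'' but no such estimate is exhibited; the free moment--cumulant recursion expresses each $K_n(r)$ as a signed sum of products of Gamma ratios with no apparent sign structure, so this is an unproven (and nontrivial) claim, not a step. Second, even granting the numerics, they do not cover the conjectured ranges: for part (1) the paper's computations leave the regime of small exponents untouched, and for part (2) the windows $1\lesssim r\lesssim1.8$ and $1.68\lesssim r\lesssim1.94$ leave $(1.94,2)$ open even numerically. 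Your structural remark about the small-$r$ regime also contains a flaw: from $|S|^r\Rightarrow\delta_1$ you correctly get that every fixed-order Hankel determinant tends to $0$ as $r\downarrow0$, but your inference that ``no determinant of bounded order can stay negative on all of $(0,2)$'' does not follow --- a quantity tending to $0$ can perfectly well remain negative on a punctured neighbourhood of $0$, so whether one is ``forced'' to use determinants of unbounded order is itself unsettled and would require asymptotic sign analysis of $K_n(r)$ as $r\downarrow0$ that you do not supply. The complex-analytic fallback (extending the branch-point analysis of \cite[Theorem 5.1]{Has14} to a sign change of the free L\'evy density) is likewise only named, not executed; as it stands, no subinterval of $(0,2)$ resp.\ $(1,2)$ beyond what \cite{Has14} already gives is actually settled by your argument.
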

 Note that (1) is a special case of Conjecture \ref{conj fp}. The odd moments of $|S|^r \,\sign(S)$ are 0 and the even moments are the same as those of $|S|^r$. So we can compute free cumulants $\{K'_n\}_{n\geq1}$ of $|S|^r \,\sign(S)$ from \eqref{Moments}, and computation in Mathematica suggests that $K'_2 K'_6-(K'_4)^2<0$ for $1\lesssim r \lesssim 1.8$ and  $\det((K_{2i+2j-2}')_{i,j=1}^4)<0$ for $1.68\lesssim r \lesssim 1.94$ (in Figs.\ \ref{fig 100}, \ref{fig 200}). So the conjecture (2) is likely to hold.
 \begin{figure}[htpb]
 \begin{minipage}{0.5\hsize}
\begin{center}
\includegraphics[width=70mm,clip]{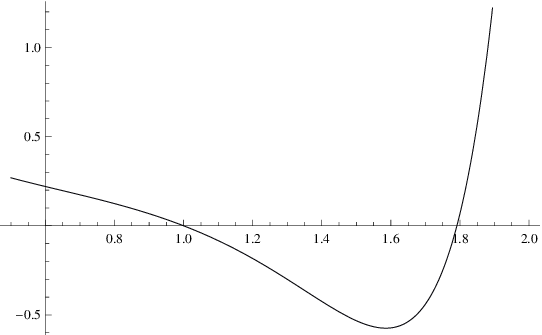}
\hangcaption{$K_2' K_6'-(K_4')^2$ as a function of $r$.}\label{fig 100}
\end{center}
\end{minipage}
\begin{minipage}{0.5\hsize}
\begin{center}
\includegraphics[width=70mm,clip]{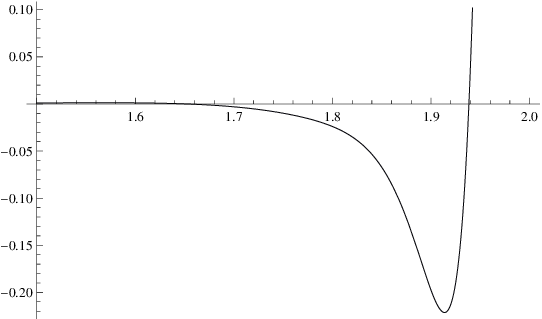}
\hangcaption{$\det((K_{2i+2j-2}')_{i,j=1}^4)$ as a function of $r$.}\label{fig 200}
\end{center}
  \end{minipage}
\end{figure}

\section*{Acknowledgements} This work is supported by JSPS Grant-in-Aid for Young Scientists (B) 15K17549. The author thanks Octavio Arizmendi and Noriyoshi Sakuma for the past collaborations and discussions which motivated this paper, Franz Lehner and Marek Bo\.zejko for discussions on powers of semicircular elements, V\'{\i}ctor P\'erez-Abreu for information on references, and Yuki Ueda and Junki Morishita for pointing out an error of Theorem \ref{thm3} of the published version.   Finally the author thanks the hospitality of organizers of International Workshop ``Algebraic and Analytic Aspects of Quantum L\'evy Processes'' held in Greifswald in March, 2015. This work is based on the author's talk in the workshop.

\begin{flushleft}
Department of Mathematics, Hokkaido University  \\
Kita 10, Nishi 8, Kitaku, Sapporo 060-0810  \\ 
Japan  

Email:thasebe@math.sci.hokudai.ac.jp

\url{http://www.math.sci.hokudai.ac.jp/~thasebe}

\end{flushleft}

\end{document}